\newtheorem{thm}{Theorem}[section]
\newcommand{\bt}{\begin{thm}}
\newcommand{\et}{\end{thm}}
\newtheorem{cor}[thm]{Corollary}   
\newcommand{\bc}{\begin{cor}}
\newcommand{\ec}{\end{cor}}
\newtheorem{lem}[thm]{Lemma}   
\newcommand{\bl}{\begin{lem}}
\newcommand{\el}{\end{lem}}
\newtheorem{prop}[thm]{Proposition}
\newcommand{\bp}{\begin{prop}}
\newcommand{\ep}{\end{prop}}
\newtheorem{defn}[thm]{Definition}
\newtheorem{conj}[thm]{Conjecture}
\newcommand{\bd}{\begin{defn}}    
\newcommand{\ed}{\end{defn}}
\newtheorem{rmrk}[thm]{Remark}   
\newcommand{\br}{\begin{rmrk}}
\newcommand{\er}{\end{rmrk}}
\newtheorem{example}[thm]{Example}
\newcommand{\GHto}{\stackrel { \textrm{GH}}{\longrightarrow} }
\newcommand{\Fto}{\stackrel {\mathcal{F}}{\longrightarrow} }
\newcommand{\Depth}{\operatorname{Depth}}
\newcommand{\be}{\begin{equation}}
\newcommand{\ee}{\end{equation}}
\newcommand{\R}{\mathbb{R}}
\newcommand{\E}{\mathbb{E}}
\newcommand{\Z}{\mathbb{Z}}
\newcommand{\diam}{\operatorname{diam}}
\DeclareMathOperator{\set}{set}
\newcommand{\Lip}{\operatorname{Lip}}
\newcommand{\mass}{{\mathbf M}}
\newcommand{\red}[1]{{\textcolor{red}{#1}}}
\newcommand{\intcurr}{{\mathbf I}}      
\newcommand{\vol}{\operatorname{Vol}}
\newcommand{\rstr}{\:\mbox{\rule{0.1ex}{1.2ex}\rule{1.1ex}{0.1ex}}\:}
\newcommand{\spt}{\operatorname{spt}}
\def\rr{\mathbb{R}}
\def\implies{\Longrightarrow}
\def\isom{\cong}
\def\too{\longrightarrow}
\def\Gr{\mathcal{G}_n(r_0,\gamma,D, \alpha)}
\begin{document}

\title[Positive mass stability for graphs]{
Intrinsic flat stability of the positive mass theorem 
for graphical hypersurfaces of Euclidean space}

\author[Huang]{Lan-Hsuan Huang}
\thanks{Huang is partially supported by NSF DMS 1308837 and DMS 1452477.}
\address{University of Connecticut}
\email{lan-hsuan.huang@uconn.edu}

\author[Lee]{Dan A. Lee}
\thanks{Lee is partially supported by a PSC CUNY Research Grant.}
\address{CUNY Graduate Center and Queens College}
\email{dan.lee@qc.cuny.edu}

\author[Sormani]{Christina Sormani}
\thanks{Sormani is partially supported by a PSC CUNY Grant and NSF DMS 1309360}
\address{CUNY Graduate Center and Lehman College}
\email{sormanic@member.ams.org}

\thanks{This material is also based upon work supported by the NSF under Grant No.~0932078~000, while all three authors were in residence at the Mathematical Sciences Research Institute in Berkeley, California, during the Fall 2013 program in Mathematical General Relativity.}
\date{}

\keywords{}

\vspace{.2cm}

\begin{abstract}
The rigidity of the Positive Mass Theorem states that the only complete asymptotically flat manifold of nonnegative scalar curvature and \emph{zero} mass is Euclidean space. We study the stability of this statement for spaces that can be realized as graphical hypersurfaces in $\E^{n+1}$.    
We prove (under certain technical hypotheses) that if a sequence of complete asymptotically flat graphs of nonnegative scalar curvature has mass approaching zero, then the sequence must converge to Euclidean space in the pointed intrinsic flat sense.   The appendix
includes a new Gromov-Hausdorff and intrinsic flat 
compactness theorem for sequences of metric spaces with uniform Lipschitz bounds on their metrics.
\end{abstract}

\maketitle

\noindent

\vspace{.5cm}

\section{Introduction}

The Positive Mass Theorem of Schoen-Yau and later Witten~\cite{Schoen-Yau-positive-mass, Witten-positive-mass} states that any complete asymptotically flat manifold of nonnegative scalar curvature has nonnegative ADM mass. 
Furthermore, if the ADM mass is zero, then the manifold must be Euclidean space. The second statement may be thought of as a rigidity theorem, and it is natural to consider the \emph{stability} of this rigidity statement.  That is, if the ADM mass is small, 
in what sense can we say that the manifold is ``close'' to Euclidean space? What topology is appropriate in this setting?

In \cite{LeeSormani1} the last two named authors conjectured that if a sequence of Riemannian manifolds with nonnegative
scalar curvature and no interior closed minimal surfaces
has ADM mass approaching zero then regions in these spaces converge in the \emph{intrinsic
flat} sense to Euclidean space.  The intrinsic flat distance,
$d_{\mathcal{F}}$, between oriented Riemannian manifolds
with boundary
was introduced by the last named author and S.~Wenger in \cite{SorWen2} applying work of 
Ambrosio-Kirchheim \cite{AK}.  Under intrinsic flat
convergence, thin regions of small volume disappear, so
it is well designed to study stability problems like this one
where it is possible that increasingly
thin gravity wells of increasingly small mass could persist
as the ADM mass converges to $0$.   The conjecture
as stated in \cite{LeeSormani1} implies the following conjecture.

\newpage
\begin{conj}[ \cite{LeeSormani1} ]\label{conj-main}
Let $M_j$ be asymptotically flat
$n$-dimensional Riemannian manifolds with nonnegative
scalar curvature and no interior closed minimal surfaces
and either no boundary or the boundary is an outermost
minimizing surface.  Fix an  $A_0>0$, and choose $p_j \in \Sigma_j$ 
to lie on a special surface $\Sigma_j\subset M_j$
such that $\vol_{n-1}(\Sigma_j)=A_0$.  If 
\be
\mathrm{m}_{\mathrm{ADM}}(M_j)\to 0
\ee
then $(M_j, p_j)$ converges to Euclidean space $(\E^n,0)$ 
in the pointed intrinsic flat sense.  That is,
for almost every $D>0$ we have
\be
d_{\mathcal{F}}
\left( B_{p_j}(D)\subset M_j, B_0(D)\subset\E^n \right) \to 0.
\ee
\end{conj}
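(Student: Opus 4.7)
The conjecture in its full generality appears out of reach, so my plan is to address the graphical case referenced in the abstract: each $M_j$ is realized as the graph of a smooth function $f_j : \mathbb{R}^n \to \mathbb{R}$, with induced metric $g_j = \delta + df_j \otimes df_j$. Under this hypothesis, the starting point is Lam's identity,
\[
\madm(M_j) \;=\; \frac{1}{2(n-1)\omega_{n-1}} \int_{\mathbb{R}^n} R_j\, \frac{dx}{\sqrt{1+|\nabla f_j|^2}},
\]
which together with $R_j \geq 0$ and $\madm(M_j)\to 0$ turns the mass hypothesis into vanishing of a weighted scalar-curvature integral, and moreover (since $R$ on a graph is a sum of $2\times 2$ principal minors of the Hessian of $f_j$, normalized by $1+|\nabla f_j|^2$) provides $L^1$-type control on those minors.

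My first step is to partition the base $\mathbb{R}^n$ into a good set $G_j$ on which $|\nabla f_j|$ is bounded by a threshold $h$, and a bad set $B_j = \mathbb{R}^n \setminus G_j$. On $G_j$ the graph metric is uniformly bi-Lipschitz to the flat Euclidean metric with distortion depending only on $h$. The second, more delicate, step is to show that both $|B_j|$ and the intrinsic volume of its preimage in $M_j$ tend to $0$; this is where the constraint coming from $R_j \geq 0$ and the vanishing weighted integral must be exploited, perhaps via a co-area argument on level sets of $f_j$ together with a Sobolev-type inequality for the height function.

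Third, I would feed the good parts into the pointed Gromov-Hausdorff/intrinsic flat compactness theorem announced in the paper's appendix: the uniform bi-Lipschitz comparison on $G_j$ supplies its hypotheses, while the small bad sets are absorbed as vanishing mass in the intrinsic flat limit. The limit of $(B_{p_j}(D), g_j)$ for almost every $D$ is then a metric space bi-Lipschitz close to $B_0(D) \subset \E^n$ with distortion at most $1+\varepsilon(h)$, and a diagonal subsequence argument letting $h \to 0$ identifies the limit with the Euclidean ball itself.

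The hardest part will be passing from measure-theoretic control on $B_j$ to control of intrinsic distances on $M_j$. A priori the graph could contain a tall narrow spike producing a long detour along the surface, or a steep cliff which becomes a short chord connecting distant points of $G_j$ through $M_j$; the former would stretch the intrinsic metric relative to the Euclidean one, and the latter would compress it. Ruling out such pathologies, and showing that any surviving bad regions contribute negligibly to $d_{\mathcal{F}}(B_{p_j}(D), B_0(D))$, is where the unspecified technical hypotheses in the abstract must enter, most likely as uniform asymptotic decay or regularity conditions on $f_j$ that prevent mass from concentrating at any scale.
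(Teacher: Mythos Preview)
First, a framing remark: the statement is a \emph{conjecture}, and the paper does not prove it in generality; it proves the graphical case under the hypotheses of Definition~\ref{def:hypotheses} via Theorems~\ref{thm-main} and~\ref{thm-pted}. You rightly restrict to graphs, so the relevant comparison is between your sketch and the paper's argument for those theorems.

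Your approach is genuinely different from the paper's, and as written it has a gap at the central step. You propose a good-set/bad-set decomposition of the base according to a gradient threshold $h$, hoping to show the bad set is small in measure and then run the appendix compactness theorem on the good part, finishing with a diagonal argument $h\to 0$. The paper does \emph{not} attempt to show that $\{|Df_j|>h\}$ is small from the mass condition; indeed this is not expected to follow. Instead the technical hypotheses enter as you suspected, but in a different place: a uniform gradient bound $|Df_j|\le\gamma$ is \emph{assumed} outside $B(r_0/2)$ (condition~\eqref{cond3}), a uniform depth bound is \emph{assumed} (condition~\eqref{cond7}), and the level sets are \emph{assumed} strictly mean-convex and outward-minimizing (condition~\eqref{cond6}). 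These feed into Wenger's compactness theorem to produce a limit $\Omega_\infty(r)$ and a $1$-Lipschitz limit map $\Psi_\infty:\Omega_\infty(r)\to\E^{n+1}$.

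The heart of the paper's argument, which your proposal lacks, is a rigidity squeeze. The prior slab estimate (Theorem~\ref{HL-Slab}, which uses the level-set hypotheses) combined with coarea gives $\limsup\vol(\Omega_j(r))\le\vol(B(r))$. The appendix theorem is applied \emph{only to the outer boundary} $\Sigma_j(r)$, where the assumed gradient bound makes the induced metrics uniformly bi-Lipschitz to the round sphere; this yields $\Psi_{\infty\#}(\partial T_\infty)=[\partial B(r)\times\{0\}]$. Then the chain
\[
\vol(B(r))\le \mass(\Psi_{\infty\#}T_\infty)\le \mass(T_\infty)\le \liminf\vol(\Omega_j(r))\le \vol(B(r)),
\]
using the area-minimizing property of the flat disk, $\Lip(\Psi_\infty)\le 1$, and lower semicontinuity of mass, forces equality throughout; equality in the second inequality for a $1$-Lipschitz map forces $\Psi_\infty$ to be an isometry onto $B(r)\times\{0\}$. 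No gradient-threshold decomposition or $h\to 0$ limit is needed.

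Your diagonal-in-$h$ step is where your plan would break: for each fixed $h$ the bi-Lipschitz constant on the good set is $\sqrt{1+h^2}$, but the limit space produced by the appendix theorem depends on $h$, and nothing in your outline pins down that these limits coincide. More seriously, as you yourself flag, intrinsic distances in $M_j$ between good points may route through the bad set (deep wells with bounded depth but unbounded gradient are allowed in $\Gr$), so the restriction to the good set is not bi-Lipschitz to Euclidean in the intrinsic metric. The paper's mass-rigidity argument sidesteps this entirely by never needing metric control on the interior of $\Omega_j(r)$; wells of bounded depth simply disappear in the intrinsic flat limit because they carry vanishing volume.
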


The conjecture is deliberately vague as to the exact nature of
the sets $\Sigma_j$ in the conjecture.   The last two named authors proved the
conjecture in the rotationally (i.e. spherically) symmetric case.  They assume $\Sigma_j$ were rotationally symmetric level sets~\cite{LeeSormani1}.  They provided an example of a sequence of manifolds with
increasingly thin wells to demonstrate that this conjecture
is false if the points are not carefully selected to avoid falling
within wells.  This example also demonstrates that balls do not
converge in the Gromov-Hausdorff sense or smooth sense
to balls in Euclidean space.

There are various types of stability results in the literature. 
 H.~Bray and F.~Finster~\cite{Bray-Finster} used spinor methods and proved that if a complete three-dimensional asymptotically flat manifold  of non-negative scalar curvature has small mass and bounded isoperimetric constant and curvature, then the manifold must be close to Euclidean space in the sense that there is an upper bound for the $L^2$ norm of the curvature tensor over the manifold except for a set of small measure. This was generalized to higher dimensions by Finster and I.~Kath~\cite{Finster-Kath}. Finster~\cite{Finster} removed the dependence on the isoperimetric constant and obtained the $L^2$ bound of the curvature tensor with the exception of a set of small surface area. J.~Corvino~\cite{Corvino} proved that a particular bound on the mass and sectional curvature of a three-dimensional asymptotically flat manifold of nonnegative scalar curvature implies the manifold is diffeomorphic to $\mathbb{R}^3$.
Under the assumption of conformal flatness and zero scalar curvature outside a compact set,  the second author \cite{Lee-near-equality} proved that if a sequence of smooth asymptotically flat metrics of nonnegative scalar curvature has mass approaching zero, then the sequence converges in smooth topology to the Euclidean metric outside a compact set. Those results can be viewed as the stability results in the region of the manifold where the curvature tensor is uniformly bounded.

Conjecture~\ref{conj-main} addresses a different, and perhaps more challenging, aspect of the stability problem, which intends to understand how  the ADM mass controls the region of the manifold where the curvature may be large. Until now the conjecture has only been verified for rotationally symmetric spaces---an extremely restricted class. 

We now consider the much larger (but still fairly restricted) class of graphical hypersurfaces of Euclidean space. For this class of asymptotically flat manifolds of nonnegative scalar curvature, G.~Lam~\cite{Lam-graph} proved the positive mass inequality in all dimensions, and the first named author and D.~Wu~\cite{Huang-Wu:2013} proved rigidity: {\em if the ADM mass is zero, then the hypersurface must be a hyperplane}. Recently the first two named authors  proved a stability result for graphical hypersurfaces with respect to the Federer-Fleming's flat topology in $\E^{n+1}$ \cite{Huang-Lee-Graph}. However, even in Euclidean space, the flat topology and intrinsic flat topology do not have a simple relationship (see Example~\ref{example:intrinsic-flat-topology}), so the result of  \cite{Huang-Lee-Graph} does not provide a special case of the conjecture above, though it has a similar flavor. Since the flat topology is extrinsic, that result is natural from the point of view of hypersurface geometry, but it does not directly say anything about the underlying Riemannian manifolds. The purpose of this work is to prove a stability result with respect to intrinsic flat topology. We achieve this by taking the estimates used in \cite{Huang-Lee-Graph} and combining them with recent results of the last named author \cite{Sormani-AA}.

We define our class of uniformly asymptotically flat graphical hypersurfaces of $\E^{n+1}$ with
uniformly bounded depth and nonnegative
scalar curvature as follows. We first recall that the spatial $n$-dimensional Schwarzschild manifold (with boundary) of mass $m>0$ can be isometrically embedded into $\mathbb{E}^{n+1}$ as the graph of a smooth function defined on $\mathbb{E}^n \smallsetminus B((2m)^{1/(n-2)})$, with minimal boundary, such that the boundary lies in the plane $\E^n\times \left\{0\right\}$. Explicitly, it is the graph of the function $S_m(|x|)$ given in (\ref{S_m-eqn})  where
$
S_m(r)=\sqrt{ 8m (r - 2m)}
$
in dimension 3.

\begin{defn}\label{def:hypotheses}
For $n\ge3$, $r_0, \gamma, D>0$, and $\alpha<0$, define $\Gr$ to be the space of all smooth complete Riemannian manifolds of nonnegative scalar curvature, $(M^n,g)$, possibly with boundary, that admit a smooth Riemannian isometric embedding $\Psi:M\too \E^{n+1}$ such that for some open $U\subset B(r_0/2)\subset \E^n$, the image $\Psi(M)$ is the graph of a function $f\in C^\infty(\E^{n}\smallsetminus \overline{U}) \cap C^0(\E^{n}\smallsetminus {U})$:
\be
\Psi(M)=\left\{(x,f(x)): \,\, x\in \E^n \smallsetminus U\right\}
\ee
with empty or minimal boundary:
\be 
\textrm{either }
\partial M=\emptyset \textrm{ and } U=\emptyset, 
\ee
\be \label{cond1}
\textrm{ or
$f$ is constant on each component of $\partial U$ and }
\lim_{x\to\partial U } |D f(x)|=\infty,
\ee
and for almost every $h$, the level set 
\be\label{cond6}
f^{-1}(h)\subset \E^n
\textrm{ is strictly mean-convex and outward-minimizing,}
\ee
where strictly mean-convex means that the mean curvature is strictly positive, and outward-minimizing means that any region of $\E^n$ that contains the region enclosed by $f^{-1}(h)$ must have perimeter at least as large as $\mathcal{H}^{n-1}( f^{-1}(h))$.

In addition we require uniform asymptotic flatness conditions:
\be\label{cond3}
 |D f| \le \gamma
\textrm{ for }|x|\ge r_0/2  \textrm{ and }
\lim_{x\to\infty} |D f| =0
.
\ee
If $n\ge 5$, we require that 
$f(x)$ approaches a constant as $x\to\infty$. 
If $n=3$ or $4$, we require that the graph is asymptotically Schwarzschild:
\be\label{cond5}
\exists \Lambda\in \R \textrm{ such that }
 \left|f(x) - (\Lambda+ S_m(|x|)) \right| \le \gamma |x|^\alpha 
 \textrm{ for } |x|\ge r_0.
\ee

Finally we require that the regions
\be
\Omega= \Omega(r_0)=\Psi^{-1}(B(r_0)\times\rr)\,\,\,
\textrm{ and } \,\,\, \Sigma=\Sigma(r_0)=\partial\Omega(r_0)\smallsetminus \partial M
\ee
have bounded depth
 \be\label{cond7}
 \Depth(\Omega, \Sigma) =\sup\left\{d_M(p,\Sigma): p\in \Omega\right\}
 \le D.
 \ee
\end{defn}

\begin{figure}[here] 
   \centering
   \includegraphics[width=0.8\textwidth]{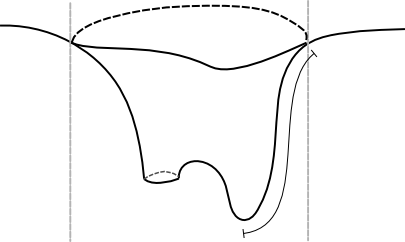}
      			\put(-40, 160){$M$}
			 \put(-210, 40){$\partial M$}
			 \put(-280, 20){$B_{r_0}\times \mathbb{R}$}
			 \put(-80, 60){$\textup{Depth}(\Omega, \Sigma)$}
			 \put(-160, 95){$\Omega(r_0)$}
			 \put(-210, 145){$\Sigma(r_0)$}
   \caption{}
   			\label{figure:graph}
\end{figure}

This class of asymptotically flat manifolds contains many nontrivial examples.  For example, we may start with an arbitrary rotationally symmetric asymptotically flat metric with nonnegative scalar curvature and no closed interior minimal surfaces.  Such a manifold embeds as a graph into Euclidean space.   Then we may perturb it as a graph slightly in any region when the scalar curvature is strictly positive.   

Our first main result is the following. 
\begin{thm}\label{thm-main}   
Let $n\ge3$, $r_0, \gamma, D>0$, $\alpha<0$, and $r\ge r_0$. For any $\epsilon>0$, there exists a $\delta=\delta(\epsilon, n, \gamma, D, \alpha, r)>0$ such that
  if $M\in\Gr$ has ADM mass 
less than $\delta$, then
\be
 d_{\mathcal{F}}\left(\,\Omega(r) \subset M\,,\, B(r)\subset \E^n\,\right)\,<\, 
 \epsilon
 \ee
 and
 \be
 |\vol(\Omega(r)) -\vol(B(r))|<\epsilon  
 \ee
 where $B(r)$ is the ball of radius $r$ around the origin, and  $\Omega(r):=\Psi^{-1}(B(r)\times\rr)$ using the notation of the above definition.
  \end{thm}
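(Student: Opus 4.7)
My plan is to argue by contradiction using a compactness argument, combining the extrinsic flat stability of \cite{Huang-Lee-Graph} with intrinsic flat machinery. Suppose there exist $\epsilon_0 > 0$ and a sequence $M_j \in \Gr$ with $\madm(M_j) \to 0$ for which either $d_{\mathcal{F}}(\Omega_j(r), B(r)) \geq \epsilon_0$ or $|\vol(\Omega_j(r)) - \vol(B(r))| \geq \epsilon_0$. By the Huang--Lee result \cite{Huang-Lee-Graph}, the graphs $\Psi_j(M_j) \subset \E^{n+1}$ converge in the Federer--Fleming flat norm to a horizontal hyperplane, which after a vertical translation (an ambient isometry, so the intrinsic geometry of $M_j$ is unaffected) we take to be $\E^n \times \{0\}$; the proof of \cite{Huang-Lee-Graph} in fact yields quantitative integral control of $|\grad f_j|$ over $B(r)$ in terms of $\madm(M_j)$, since the Lam formula writes nonnegative scalar curvature as a divergence of a quantity involving $\grad f$.

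The main step is promoting this extrinsic control to intrinsic flat convergence of $(\Omega_j(r), d_{M_j})$ to $(B(r), d_{\mathrm{Euc}})$. I would use the natural projection $\pi_j : \Omega_j(r) \to B(r)$ defined by $\Psi_j^{-1}(x, f_j(x)) \mapsto x$, which is automatically $1$-Lipschitz. Condition (\ref{cond3}) supplies $|\grad f_j| \leq \gamma$ on $\{|x| \geq r_0/2\}$, so on the annulus $\{r_0/2 \leq |x| \leq r\}$ the inverse $\pi_j^{-1}$ is bi-Lipschitz with uniform constants. Choosing a threshold $\eta_j \to 0$ slowly, the ``bad'' set $\Tbad_j = \{x \in B(r) : |\grad f_j(x)| > \eta_j\}$ has $\mathcal{H}^n(\Tbad_j) \to 0$, while on its complement the graph metric and the Euclidean pull-back differ by a factor of at most $(1+\eta_j^2)^{1/2}$, so $\pi_j^{-1}$ is almost isometric there. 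The remaining piece $\Psi_j^{-1}(B(r_0/2) \times \R) \subset \Omega_j$, where $|\grad f_j|$ may blow up near $\partial U_j$, is controlled intrinsically by the bounded-depth hypothesis (\ref{cond7}) together with the outward-minimizing condition (\ref{cond6}), which jointly bound its intrinsic diameter and volume.

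With these pieces in place, I would apply the new appendix compactness theorem---which yields intrinsic flat subsequential limits under uniform Lipschitz bounds on the metrics---combined with the framework of \cite{Sormani-AA}, where almost-isometric Lipschitz maps defined off a small exceptional set witness intrinsic flat closeness. The extrinsic flat convergence identifies the limit with $(B(r), d_{\mathrm{Euc}})$, producing the contradiction; the volume statement follows from $\vol(\Omega_j(r)) = \int_{B(r) \setminus U_j} \sqrt{1 + |\grad f_j|^2}\,dx$ together with the smallness of $\Tbad_j$ and the bounded-depth control on $U_j$'s contribution. The hardest part will be the uniform treatment of the interior well above $U_j$: no pointwise gradient bound is available there, so ruling out a thin well of large intrinsic diameter persisting in the limit requires simultaneously exploiting bounded depth, outward minimization, and the minimal boundary condition---precisely the roles that these technical hypotheses are designed to play in Definition~\ref{def:hypotheses}.
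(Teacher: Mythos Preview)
Your contradiction/compactness framework and your list of ingredients (the $1$-Lipschitz projection, the gradient bound on the annulus, bounded depth, the Huang--Lee slab estimate, the appendix theorem) match the paper's, but the way they are assembled is structurally different, and your sketch has a genuine gap at the identification step.

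The paper does \emph{not} attempt to build almost-isometries between $\Omega_j(r)$ and $B(r)$ directly. Instead it (i) applies Wenger's compactness theorem to extract \emph{some} intrinsic flat limit $\Omega_\infty(r)=(X_\infty,d_\infty,T_\infty)$ together with a $1$-Lipschitz limit map $\Psi_\infty:\Omega_\infty(r)\to\overline{B(r)}\times\R$; (ii) uses the Penrose inequality and Theorem~\ref{HL-Slab} to prove $\limsup_j\vol(\Omega_j(r))\le\vol(B(r))$; (iii) applies the appendix Theorem~\ref{app-thm} \emph{only to the outer boundaries} $\Sigma_j(r)$---where the uniform two-sided Lipschitz bound from~(\ref{cond3}) genuinely holds---to obtain $\Psi_{\infty\#}(\partial T_\infty)=[\partial B(r)\times\{0\}]$; and (iv) closes with the mass-squeeze chain
\[
\vol(B(r))\le\mass(\Psi_{\infty\#}T_\infty)\le\mass(T_\infty)\le\liminf_j\vol(\Omega_j(r))\le\vol(B(r)),
\]
the first inequality coming from the area-minimizing property of the flat disk among integral currents with boundary $[\partial B(r)\times\{0\}]$. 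Equality throughout forces $\Psi_\infty$ to be an isometry onto $B(r)$, and volume convergence is a by-product.

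Your proposal instead tries to apply the appendix theorem to the full region and to treat the interior well as a small exceptional set. This does not go through as written. The appendix theorem requires a uniform two-sided bound $\lambda^{-1}\le d_j/d_0\le\lambda$ on a \emph{fixed} underlying space, which fails on $B(r)\smallsetminus U_j$ since $|\grad f_j|$ is unbounded near $\partial U_j$; restricting to the complement of a small-measure bad set does not help, because intrinsic distances between good points can still be governed by curves through the bad set, and the interior well carries volume bounded \emph{below} (close to $\vol(B(r_0/2))$) so it cannot be discarded. Your justification for integral control of $|\grad f_j|$ via ``Lam's divergence formula'' is also not correct---Lam's identity controls a weighted mean-curvature integral on level sets, not $\int|\grad f|$; the actual control comes from the volume estimate of Lemmas~\ref{Vol-Bounds-}--\ref{Vol-Bounds+}. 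Most importantly, the sentence ``the extrinsic flat convergence identifies the limit with $(B(r),d_{\mathrm{Euc}})$'' is exactly the assertion that needs proof: convergence of $\Psi_j(M_j)$ to a plane in $\E^{n+1}$ in the Federer--Fleming sense says nothing about the intrinsic metric on the abstract limit current space. The paper's boundary-identification plus disk-minimality squeeze is precisely the mechanism that bridges this, and your outline does not supply a substitute for it.
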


The definition of $\Gr$ essentially encodes the hypotheses of Theorem~\ref{thm-main}, so we take a moment to discuss the conditions in $\Gr$.    The condition  $U\subset B(r_0/2)$ ensures that $\partial M$ and $\Sigma(r)$, which together comprise $\partial\Omega(r)$, do not touch each other.
Conditions \eqref{cond3} and \eqref{cond5} are the asymptotic flatness conditions that we need for our proof. Note that they all follow from the fairly natural (but much stronger) requirement that the $f$'s  are uniformly asymptotically Schwarzschild up to first order.

The geometric conditions on the level sets in \eqref{cond6} are needed in order to apply the estimates of the first two authors in~\cite{Huang-Lee-Graph} and are discussed there. In particular, the first named author and Wu have proven that other conditions imply that the level sets are always weakly mean-convex~(see \cite[Theorem 4, Theorem 2.2]{Huang-Wu:2013} and \cite[Theorem 3]{Huang-Wu-Penrose}). We also use the outward minimizing property to estimate volumes in the proof of Theorem~\ref{thm-main}. 

Condition \eqref{cond7} prevents the possibility of ``arbitrarily deep gravity wells".   The notion of depth was introduced by the last named author and P.~LeFloch in~\cite{LeFloch-Sormani-1} where they proved a compactness theorem for a family of rotationally symmetric regions of nonnegative scalar curvature.  Here we use this condition
combined with the volume estimates to apply a compactness theorem of S. Wenger proven in \cite{Wenger-compactness}.

Applying Theorem~\ref{thm-main} and key
results concerning intrinsic flat convergence we obtain the
following pointed convergence theorem which proves the
conjecture for $M\in \Gr$ where $\Sigma_j$ are preimages of the intersections
of the graph $\Psi_j(M_j)$ with the cylinder at $r_0$:

\begin{thm}\label{thm-pted}   
Let $n\ge3$, $r_0, \gamma, D>0$, and $\alpha<0$. Let $M_j\in\Gr$ be a sequence such that
  \be
  m_{ADM}(M_j) \to 0.
  \ee
 If $p_j\in M_j$ is a sequence of points such that  $p_j \in \Sigma(r_0)
 :=\Psi^{-1}(\partial B(r_0)\times \mathbb{R})$, 
  then
  $(M_j,p_j)$ converges in the pointed intrinsic flat
  sense to $\E^n$.  That is,
  for almost every $R>2 r_0+D$,
  \be\label{eq-pted}
 d_{\mathcal{F}}\left(\, B_{p_j}(R) \subset M_j\,,\, B(R)\subset \E^n\,\right)\,
 \to 0
 \ee
 and
 \be
 \vol(B_{p_j}(R))\to \vol(B(R)).
 \ee
   \end{thm}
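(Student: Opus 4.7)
The plan is to deduce Theorem~\ref{thm-pted} from Theorem~\ref{thm-main} by enclosing each intrinsic metric ball $B_{p_j}(R)\subset M_j$ inside a large graphical region $\Omega(r)$, upgrading the intrinsic flat convergence given by Theorem~\ref{thm-main} to Gromov--Hausdorff convergence via the Lipschitz structure of graphs, and then extracting convergence of the balls themselves. First, because the Euclidean projection $\pi\circ\Psi:M_j\too\E^n$ is 1-Lipschitz and $\pi\Psi(p_j)\in\partial B(r_0)$, every point within intrinsic distance $R$ of $p_j$ projects into $B(r_0+R)\subset\E^n$. Setting $r:=r_0+R+1$ yields the uniform inclusion
\be
B_{p_j}(R)\subset \Omega(r)
\ee
for all $j$, where $r$ depends only on $r_0$ and $R$.

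Applying Theorem~\ref{thm-main} to this $r$ furnishes $d_{\mathcal{F}}(\Omega(r),B(r))\to 0$ together with $\vol(\Omega(r))\to\vol(B(r))$. Outside the well $\Omega(r_0)$, each $\Omega(r)$ is a graph with $|Df_j|\le\gamma$, so $\pi\Psi$ realizes a $\sqrt{1+\gamma^2}$-bi-Lipschitz identification with $B(r)\smallsetminus B(r_0)$; inside the well, condition~\eqref{cond7} bounds the depth by $D$. Together these supply the uniform Lipschitz control on the intrinsic metrics needed to invoke the Gromov--Hausdorff/intrinsic flat compactness theorem proved in the appendix. Passing to a subsequence gives Gromov--Hausdorff convergence of $\Omega(r)$ to a limit metric space, which must coincide with the intrinsic flat limit $B(r)$ equipped with the Euclidean metric.

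Since $p_j\in\Sigma(r_0)$ projects into the compact sphere $\partial B(r_0)$, a further subsequence yields $\pi\Psi(p_j)\to q_\infty\in\partial B(r_0)\subset B(r)$; the bi-Lipschitz graph identification then upgrades this to pointed Gromov--Hausdorff convergence $(\Omega(r),p_j)\to(B(r),q_\infty)$. Standard ball-convergence results imply that for almost every $R$, the intrinsic balls $B_{p_j}(R)\subset\Omega(r)$ converge in Gromov--Hausdorff sense to $B_{q_\infty}(R)\subset B(r)$. Because we arranged $R+r_0<r$, this limit ball is just the Euclidean ball of radius $R$ about $q_\infty$, i.e.\ an isometric copy of $B(R)\subset\E^n$. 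Coupling Gromov--Hausdorff ball convergence with the ambient intrinsic flat convergence (via restriction of the integral currents to metric balls, which is valid for a.e.\ $R$), one obtains $d_{\mathcal{F}}(B_{p_j}(R),B(R))\to 0$; the volume statement then follows from lower semicontinuity of mass under intrinsic flat convergence combined with the matching upper bound supplied by the Lipschitz control, both of which agree at almost every $R$.

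The main obstacle is passing from intrinsic flat convergence on the ambient region $\Omega(r)$ to convergence of the specific intrinsic balls $B_{p_j}(R)$. Intrinsic flat convergence alone is too weak to control distances to a distinguished point --- thin, deep wells could in principle disappear in the limit while distorting the intrinsic distance --- so the bounded-gradient graph structure must be exploited to obtain simultaneous Gromov--Hausdorff and intrinsic flat convergence. This is precisely what the appendix's compactness theorem delivers; once subsequential convergence to a unique limit is established, a standard argument promotes it to convergence of the original sequence.
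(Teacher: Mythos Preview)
Your containment argument $B_{p_j}(R)\subset\Omega_j(r)$ and the subsequent application of Theorem~\ref{thm-main} are fine, and the overall strategy of locating $p_\infty$ and then invoking ball convergence (Lemma~\ref{ball-converge}) is exactly what the paper does. However, there is a genuine gap in how you locate $p_\infty$.

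You claim that the depth bound~\eqref{cond7} together with the gradient bound outside $\Omega(r_0)$ furnishes ``uniform Lipschitz control on the intrinsic metrics'' sufficient to apply the appendix theorem (Theorem~\ref{app-thm}) to all of $\Omega_j(r)$, yielding Gromov--Hausdorff convergence $\Omega_j(r)\GHto B(r)$. This is false. Theorem~\ref{app-thm} requires a \emph{fixed} underlying set $X$ and metrics $d_j$ satisfying a two-sided bound $\lambda^{-1}\le d_j/d_0\le\lambda$. Inside the well region $\Omega_j(r_0)$ there is no such structure: the domains $\E^n\smallsetminus U_j$ vary with $j$, and even where they agree the intrinsic metrics can be arbitrarily distorted relative to any fixed reference---a well of depth $D$ can be arbitrarily thin, so two points $\epsilon$-close in $\E^n$ can be intrinsic distance $\approx 2D$ apart. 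The depth bound controls diameter, not distance ratios. In fact the paper explicitly notes (Remarks~\ref{thin-wells} and~\ref{no-disappearing}) that points inside $\Omega_j(r)$ may disappear and that $\Omega_j(r)$ need \emph{not} Gromov--Hausdorff converge to $B(r)$.

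The paper circumvents this by applying Theorem~\ref{app-thm} only to the outer boundaries $\Sigma_j(r_0)$, where the gradient bound genuinely gives a uniform bi-Lipschitz identification with $\partial B(r_0)$; this is the content of Lemma~\ref{biLip}. That lemma yields $\Sigma_j(r_0)\GHto\partial B(r_0)$, which is enough to guarantee (via the Gromov--Hausdorff Bolzano--Weierstrass theorem) that the sequence $p_j\in\Sigma_j(r_0)$ does not disappear and subconverges to some $p_\infty\in\partial B(r_0)\subset B(r)$. From there Lemma~\ref{ball-converge} gives intrinsic flat convergence of the balls directly, without ever asserting Gromov--Hausdorff convergence of $\Omega_j(r)$ or of the balls $B_{p_j}(R)$. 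Your proof can be repaired by replacing the appeal to GH convergence of $\Omega_j(r)$ with this more modest use of Lemma~\ref{biLip} on the boundary.
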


The paper begins with background material in Section~\ref{sect-back}.   We first review Federer-Fleming integral
currents and flat convergence in Euclidean space \cite{FF} and
Ambrosio-Kirchheim integral currents on metric spaces \cite{AK}.
Then we review key definitions and theorems  of the third author and
Wenger 
concerning intrinsic flat convergence
\cite{SorWen2}\cite{Wenger-compactness}\cite{Sormani-AA}
and  work of Gromov and Grove-Petersen on Gromov-Hausdorff
convergence \cite{Gromov-metric}\cite{Grove-Petersen}. 
We close with a review of prior work of the first two named
authors on graph manifolds with small ADM mass \cite{Huang-Lee-Graph}.  

In Section~\ref{sect-lim} we apply the volume and depth
bounds combined with 
Wenger's Compactness Theorem \cite{Wenger-compactness}
and an intrinsic flat Arzela-Ascoli Theorem of the third author
\cite{Sormani-AA} to prove Theorem~\ref{exists-lim}: 
{\em if $M_j \subset \Gr$ and 
$\Omega_j(r) =\Psi_j^{-1}(B(r)\times\rr)$ for fixed $r\ge r_0$
then a subsequence converges 
\be
\Omega_j(r)\Fto \Omega_\infty(r)
\ee
with a Lipschitz map $\Psi_\infty:\Omega_\infty(r)\to \E^{n+1}$}.  
Thus $\partial \Omega_j(r)\to \partial \Omega_\infty(r)$.  

In Section~\ref{sect-geom} we use the fact that the
manifolds are graphs and have ADM mass converging to $0$
applying prior work of the first two named
authors \cite{Huang-Lee-Graph}.
In Lemma~\ref{inner-gone} we prove that the inner boundaries disappear and the
outer boundaries converge:
\be
\Sigma_j(r)\Fto \Sigma_\infty(r)=\partial \Omega_\infty(r)
\ee
In Lemmas~\ref{Vol-Bounds+} and~\ref{Vol-Bounds-}
we bound the volumes of $\Omega_j(r)$ from above and below: showing $\vol(\Omega_j)\to \vol(B(r))$.   In Lemma~\ref{thm-in-disk}
we prove that $\Psi_\infty(\Omega_\infty)$ lies in a Euclidean
disk.

In Section~\ref{sect-biLip} we apply (\ref{cond3}) which
controls the gradient of the graph near the boundary of
$\Sigma_j$ to prove that the outer boundaries, $\Sigma_j(r)$
converge in the bi-Lipschitz sense to their limit $\Sigma_\infty(r)$.
This requires a new highly technical Theorem~\ref{app-thm}
concerning intrinsic flat and Gromov-Hausdorff convergence
that is proven in the appendix.  Note that one consequence
of this section is that none of the points on $\Sigma_j$
are disappearing in the intrinsic flat limit (even though points
within $\Omega_j$ may be disappearing in the limit).

In Section~\ref{sect-main} we prove Theorem~\ref{thm-main}
by combining the above results.   In Section~\ref{sect-pted}
we prove Theorem~\ref{thm-pted} by applying Theorem~\ref{thm-main}.   Note that the final steps in the proofs of these two theorems
apply far more generally than to graph manifolds as long as one
can prove the lemmas leading up to these results in the more
general case.   

\subsection{Acknowledgements}

The authors appreciate the Mathematical Sciences
Research Institute for the wonderful research environment 
there and the opportunity to begin working together on this
project.   We are grateful to Jim Isenberg,
Yvonne Choquet-Bruhat, Piotr Chrusciel, Greg Galloway, Gerhard Huisken, Sergiu Klainerman, Igor Rodnianski, and Richard Schoen
for their organization of the program in General Relativity at MSRI in Fall 2013. We also thank the referee for careful reading and for helpful comments. 


\section{Background}\label{sect-back}

Here we provide background stating the key results and notions needed from prior work that we apply in this paper.
We begin with a review on Federer-Fleming's notion of integral currents on Euclidean space
and flat convergence.  In particular we review the flat convergence of
such graphs to a plane.

Next we review intrinsic flat convergence.   We begin with
Ambrosio-Kirchheim's notion of integral currents on complete metric spaces and a review of their semicontinuity of
mass \cite{AK}.  We then review the work of the third author
with Wenger which introduced integral current spaces and
the intrinsic flat distance \cite{SorWen2} and 
key theorems about the intrinsic flat distance applied in
this paper from
\cite{SorWen2}, \cite{Wenger-compactness} and \cite{Sormani-AA}.

 Finally we present the properties of asymptotically graphs and the  key results from
work of the first two authors \cite{Huang-Lee-Graph} studying graphical hypersurfaces of $\E^{n+1}$ with nonnegative scalar curvature and small ADM mass.


\subsection{Flat Convergence of Federer and Fleming}

The notion of an integral current on $\E^N$ and its mass and the
flat distance between integral currents was first defined by
Federer and Fleming in 1960 \cite{FF}.      
The Federer-Fleming notion of mass is a
weighted volume defined for integral currents (which are weighted oriented submanifolds built from countable collections of
Lipschitz submanifolds).   It is unrelated to ADM mass.

Any embedded $n$-submanifold of $\E^N$,  $\varphi: M^n\to \E^N$, can be thought of as a functional $T$ on  $n$-forms (\textit{i.e.\ }a current) as follows. For each $n$-form $\omega$ of compact support, 
\be
T(\omega):=\varphi_\#\lbrack M \rbrack \omega=\int_M\varphi^*\omega.
\ee
This concept can be extended to 
weighted oriented submanifolds built from countable collections of
Lipschitz submanifolds $\varphi_i: A_i\subset \E^n \to \E^N$
with integer weights $a_i\in \Z$ to define an
{\em integer rectifiable current}:
\be
T(\omega):=\sum_{i=1}^\infty a_i {\varphi_i}_{\#}\lbrack A_i \rbrack \omega=\sum_{i=1 }^{\infty} a_i\int_{A_i}\varphi_i^*\omega.
\ee
The boundary of a current is defined by 
\be
\partial T (\omega) = T(d\omega)
\ee
so that in particular for a smooth submanifold with boundary:
\be
\partial \lbrack M \rbrack= \lbrack \partial M \rbrack.
\ee
An {\em integral current} 
is an integer rectifiable current whose boundary
is also an integer rectifiable current. 
They denote the space of $n$-dimensional integral currents in $\E^N$ to be $\intcurr_n(\E^N)$.
They include the ${\bf{0}}$ current whose action on any form
satisfies ${\bf{0}}(\omega)=0$.

Given $T_1, T_2 \in \intcurr_n(\E^N)$ and an open subset $O\subset \E^N$,  the flat distance between $T_1$ and $T_2$ in $O$  is defined to be 
\be
	d_{F_O}(T_1, T_2)= \inf\left\{ \mass(A) + \mass(B) : T_1-T_2 = A + \partial B \mbox{ \rm{in} } O\right\}
\ee
where the infimum is taken over all $A\in \intcurr_n(O)$
and all $B\in \intcurr_{n+1}(O)$, and $\mass$ is the mass 
of each of these integral currents in $O$.  This is not Federer-Fleming's
notation but we use this because it is simpler to extend this
notation.

Federer and Fleming   proved a compactness theorem
stating that if $\mass(T_i)\le V_0$, $\mass(\partial T_i)\le A_0$,
and $\spt T_i \subset K$ compact, then a subsequence of $T_i$
converges in the weak and flat sense to an integral current of the same dimension (possibly the $\bf{0}$ current).   This theorem is one of the foundational theorems
of the field of Geometric Measure Theory.   

The flat distance is an extrinsic notion, not an intrinsic one.
For example, if we consider the graphs:
\be \label{ext-ex}
\left\{(x,f_k(x)): x\in [0,\pi]\right\}\in \E^2 
\ee
with $f_k(x)$ piecewise linear with slope $\pm 1$
connecting the points
\be
(0,0), (1/(2k),1/(2k)), (2/(2k),0), (3/(2k), 1/(2k)),..., (1,0)
\ee
then we have corresponding integral currents $T_k$ of weight $1$
with
\be
\mass(T_k)=\sqrt{2}
\ee
and $d_F(T_k, T_\infty)\to 0$ where $T_\infty$ is the current
corresponding to the graph of $f_\infty$ identically equal to $0$.
This can be seen by taking $A_k=0$ and $B_k$ to be the sum of 
the $2$ dimensional triangular regions lying between the graphs of 
$f_k$ and $f$.   Observe that
\be
\mass(T_\infty)=1.
\ee
In fact, Federer-Fleming proved lower semicontinuity of mass
\cite{FF}:
\be
\liminf_{j\to\infty}\mass(T_j)\ge \mass(T_\infty).
\ee
However, note that in this example the intrinsic geometry of each $T_k$ is that of a line segment of length $\sqrt{2}$, while the limit space $T_\infty$ is a line segment of length $1$.

\subsection{Review of Ambrosio-Kirchheim Integral Currents}

In \cite{AK}, Ambrosio and Kirchheim extended the notion of integral currents on $\E^N$ to integral currents on a complete metric space $Z$ denoted $\intcurr_n(Z)$.   Their notion of a current $T$ acts on
$n+1$ tuples of Lipschitz functions $(f, \pi_1,...,\pi_n)$
rather than differential forms, so that a rectifiable current is
defined by a countable collection of bi-Lipschitz charts 
$\psi_i: A_i \to Z$ (where $A_i$ are Borel sets
in $\mathbb{E}^n$)
as follows
\be
T(f, \pi_1,..., \pi_n)=\sum_{i=1}^\infty a_i \psi_{i\#}\lbrack A_i \rbrack
(f, \pi_1,..., \pi_n)
\ee
where the push forward is defined
\be
{\psi_i}_{\#}\lbrack A_i \rbrack (f, \pi_1, \dots, \pi_n) 
= \int_{A_i} f\circ \psi_i \,\,
d(\pi_1\circ\psi_i) \wedge \cdots \wedge d(\pi_n\circ\psi_i).
\ee

Ambrosio-Kirchheim define mass in a more complicated way than Federer-Fleming so that they are able to prove lower semicontinuity of mass.  They prove
the following useful relationship 
between mass and Hausdorff measure
for currents with weight $1$:
\be
C_n\mathcal{H}_n(\set T) \le \mass(T) \le C'_n  \mathcal{H}_n(\set T)
\ee
where $C_n, C'_n$ are precise dimension dependent constants
and $\set(T)$ is the collection of points of positive density with
respect to $T$.   In addition if $T$ is an $n$ dimensional integral current on
$(Z,d)$ and we rescale $d$ by $\lambda>0$ then
 \be
\mass_{(Z,\lambda d)}(T)=\lambda^n \mass_{(Z,d)}(T).
\ee 
More generally, if $d'\ge d$ then
\be
\mass_{(Z,d')}(T)\ge \mass_{(Z,d)}(T).
\ee 

They define boundary:
\be
\partial T(f, \pi_1,...,\pi_n) = T(1, f, \pi_1,...,\pi_n).
\ee
The space of integral currents, denoted $\intcurr_n(Z)$, is
the collection of integer rectifiable currents whose boundaries 
are integer rectifiable.  Again there is the $\bf{0}$ integral current
in each dimension.  The notion of flat distance naturally extends,
which we denote as $d_F^Z(T_1, T_2)$. 
They generalized Federer and Fleming's compactness theorem to this setting replacing $O$ with the requirement that $Z$ is compact.

\subsection{Gromov-Hausdorff Convergence}
In order to define Gromov-Hausdorff and intrinsic flat
convergence we need the following notion:

\begin{defn}
A map $\varphi: X \to Y$ between metric spaces, $(X, d_X)$ and $(Y, d_Y)$,
is a metric isometric embedding iff it is distance preserving:
\be
d_Y(\varphi(x_1), \varphi(x_2)) = d_X(x_1, x_2) \qquad \forall x_1, x_2 \in X.
\ee
\end{defn}

It is of crucial importance that this does not agree with 
the Riemannian notion of an isometric embedding.
See \cite{LeeSormani1} for a discussion of the distinction.

Although our main results do not directly involve Gromov-Hausdorff convergence, it is applied significantly within the paper.

\begin{defn}[Gromov]\label{defn-GH} 
The Gromov-Hausdorff distance between two 
compact metric spaces $\left(X, d_X\right)$ and $\left(Y, d_Y\right)$
is defined as
\be \label{eqn-GH-def}
d_{GH}\left(X,Y\right) := \inf  \, d^Z_H\left(\varphi\left(X\right), \psi\left(Y\right)\right)
\ee
where the $\inf$ is taken over all complete metric space $Z$ and metric isometric embeddings $\varphi: X \to Z$ and $\psi:Y\to Z$.  The Hausdorff distance in $Z$ is defined as
\be
d_{H}^Z\left(A,B\right) = \inf\left\{ \epsilon>0: A \subset T_\epsilon\left(B\right) \textrm{ and } B \subset T_\epsilon\left(A\right)\right\}.
\ee
\end{defn}

Gromov proved that this is indeed a distance on compact metric spaces
in the sense that $d_{GH}\left(X,Y\right)=0$
iff there is an isometry between $X$ and $Y$ \cite{Gromov-metric}.   
He also proved the following embedding theorem in \cite{Gromov-poly}:

\begin{thm}[Gromov] \label{Gromov-Z}
If a sequence of compact metric spaces, $X_j$,
converges in the Gromov-Hausdorff sense to a compact metric space $X_\infty$, 
\be
X_j \GHto X_\infty
\ee
then in fact there is a compact metric space, $Z$, and isometric embeddings $\varphi_j: X_j \to Z$ for $j\in \left\{1,2,...,\infty\right\}$ such that
\be
d_H^Z\left(\varphi_j(X_j),\varphi_\infty(X_\infty)\right) \to 0.
\ee
\end{thm}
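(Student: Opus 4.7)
The plan is to glue together the auxiliary metric spaces that realize the pairwise Gromov-Hausdorff convergence into a single ambient space $Z$. By the definition of $d_{GH}$, for each $j \in \mathbb{N}$ we can choose a complete metric space $W_j$ together with metric isometric embeddings $\alpha_j \colon X_j \to W_j$ and $\beta_j \colon X_\infty \to W_j$ with $\epsilon_j := d_H^{W_j}(\alpha_j(X_j), \beta_j(X_\infty)) \to 0$ (if the infimum defining $d_{GH}$ is not attained, one allows an extra slack of $1/j$).

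First I would form the disjoint union $\tilde{Z} := X_\infty \sqcup \bigsqcup_{j \in \mathbb{N}} X_j$ and define a function $\tilde{d}$ on $\tilde{Z} \times \tilde{Z}$ by cases: within one piece, $\tilde{d}$ agrees with that piece's own metric; for $x \in X_j$ and $y \in X_\infty$, set $\tilde{d}(x,y) := d_{W_j}(\alpha_j(x), \beta_j(y))$; for $x \in X_j$ and $y \in X_k$ with $j \neq k$, set
$$
\tilde{d}(x,y) := \inf_{z \in X_\infty} \bigl[\, d_{W_j}(\alpha_j(x), \beta_j(z)) + d_{W_k}(\alpha_k(y), \beta_k(z)) \,\bigr].
$$
After verifying $\tilde{d}$ is a pseudo-metric, let $Z$ be the quotient obtained by identifying points at $\tilde{d}$-distance zero, followed by metric completion. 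Because every $\alpha_j$ and every $\beta_j$ is distance-preserving, no identifications occur inside any single $X_j$ or inside $X_\infty$, so the induced maps $\varphi_j \colon X_j \to Z$ and $\varphi_\infty \colon X_\infty \to Z$ are metric isometric embeddings.

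The Hausdorff estimate $d_H^Z(\varphi_j(X_j), \varphi_\infty(X_\infty)) \le \epsilon_j \to 0$ is immediate from the definition of $\tilde{d}$ between $X_j$ and $X_\infty$. Compactness of $Z$ follows from total boundedness: given $\eta > 0$, pick a finite $\eta$-net in $X_\infty$ (possible since $X_\infty$ is compact), and for each of the finitely many $j$ with $\epsilon_j > \eta$ separately pick a finite $\eta$-net in $X_j$; every remaining piece $X_j$ with $\epsilon_j \le \eta$ lies within $\eta$ of $\varphi_\infty(X_\infty)$ and so within $2\eta$ of the $X_\infty$-net. Hence $Z$ admits a finite $2\eta$-net for every $\eta > 0$.

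The main obstacle is the triangle inequality for $\tilde{d}$ through three distinct pieces: for $x \in X_j$, $y \in X_k$, $w \in X_\ell$ with $j, k, \ell$ pairwise distinct, one must combine two infima that independently route through $X_\infty$. The key trick is to pick near-optimal witnesses $z_1, z_2 \in X_\infty$ for $\tilde{d}(x,y)$ and $\tilde{d}(y,w)$, apply the triangle inequality in $W_k$ to conclude $d_{W_k}(\alpha_k(y),\beta_k(z_1)) + d_{W_k}(\alpha_k(y),\beta_k(z_2)) \ge d_{X_\infty}(z_1,z_2)$ (using that $\beta_k$ is isometric), and then use the triangle inequality in $W_\ell$ to replace $\beta_\ell(z_2)$ by $\beta_\ell(z_1)$ at the cost of $d_{X_\infty}(z_1,z_2)$; the two $d_{X_\infty}(z_1,z_2)$ contributions cancel, leaving a bound on $\tilde{d}(x,w)$. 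The remaining cases (symmetry, mixed-index subcases, non-degeneracy on the quotient) are routine applications of the same principle.
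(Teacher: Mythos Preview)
The paper does not supply its own proof of this theorem; it is quoted as a background result of Gromov with a reference to \cite{Gromov-poly}, and no argument is given in the text. So there is nothing to compare your proposal against line-by-line.

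That said, your approach is the standard one and is in fact the one the paper alludes to elsewhere: in the appendix (discussion preceding Theorem~\ref{app-thm}) the authors remark that Gromov's original proof builds the common metric space $Z$ as a disjoint union, which is exactly what you do. Your verification of the triangle inequality across three distinct pieces is correct---the term $d_{X_\infty}(z_1,z_2)$ produced by the triangle inequality in $W_k$ is precisely what pays for rerouting through $z_1$ instead of $z_2$ in $W_\ell$. The compactness argument via total boundedness is also fine. One small point: your phrase ``the two $d_{X_\infty}(z_1,z_2)$ contributions cancel'' might be clearer as ``the gain from $W_k$ exactly covers the cost in $W_\ell$,'' since there is really one such term appearing on each side of the bookkeeping rather than two terms that literally subtract.
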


This theorem allows one to define converging sequences of
points: 

\begin{defn}\label{Gromov-points}
One says that $x_j \in X_j$ converges to $x_\infty \in X_\infty$,
if there is a common space $Z$ as in Theorem~\ref{Gromov-Z}
such that $\varphi_j(x_j) \to \varphi_\infty(x)$ as points in
$Z$. 
\end{defn}

One can apply Theorem~\ref{Gromov-Z} to see that for any $x_\infty\in X_\infty$ there exists $x_j\in X_j$ converging to $x_\infty$ in this sense.   Theorem~\ref{Gromov-Z} also implies the following Gromov-Hausdorff Bolzano-Weierstrass Theorem:

\begin{thm}[Gromov] \label{Gromov-B-W}
Given compact metric spaces, $X_j \GHto X_\infty$, and $x_j\in X_j$, there is a 
subsequence, also denoted $x_j$, that
converges to some point $x_\infty\in X_\infty$ in the sense described above.
\end{thm}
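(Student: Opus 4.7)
The plan is to reduce the statement to the classical sequential compactness of a compact metric space, using Gromov's embedding theorem (Theorem~\ref{Gromov-Z}) as the bridge. Since we are already given that $X_j \GHto X_\infty$, Theorem~\ref{Gromov-Z} supplies a single compact metric space $Z$ together with metric isometric embeddings $\varphi_j : X_j \to Z$ for $j\in\{1,2,\dots,\infty\}$ satisfying
\be
d_H^Z(\varphi_j(X_j),\varphi_\infty(X_\infty)) \to 0.
\ee
This brings every $x_j$ into the \emph{same} ambient space $Z$, which is precisely the setting in which a convergence statement is straightforward.

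The key steps are as follows. First, I consider the points $z_j := \varphi_j(x_j) \in Z$. Since $Z$ is a compact metric space, the classical Bolzano-Weierstrass property applies, so there is a subsequence, which I relabel $z_j$, converging in $Z$ to some limit $z_\infty \in Z$. Second, I show that $z_\infty$ lies in $\varphi_\infty(X_\infty)$. Because $z_j \in \varphi_j(X_j)$ and the Hausdorff distance $d_H^Z(\varphi_j(X_j),\varphi_\infty(X_\infty))$ tends to $0$, for each $j$ there is some $w_j \in \varphi_\infty(X_\infty)$ with $d_Z(z_j, w_j) \to 0$. Thus $w_j \to z_\infty$ as well, and since $\varphi_\infty(X_\infty)$ is compact hence closed in $Z$, we conclude $z_\infty \in \varphi_\infty(X_\infty)$. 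Third, because $\varphi_\infty$ is a metric isometric embedding it is injective, so I may define $x_\infty := \varphi_\infty^{-1}(z_\infty) \in X_\infty$. By Definition~\ref{Gromov-points}, the relation $\varphi_j(x_j) \to \varphi_\infty(x_\infty)$ in $Z$ is exactly what it means for $x_j$ to converge to $x_\infty$ in the Gromov-Hausdorff sense.

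There is really no hard step here; the entire content is packaged into Theorem~\ref{Gromov-Z}, whose proof (due to Gromov) is the nontrivial input. The only mild subtlety is to verify that the limit of $z_j$ actually lands in $\varphi_\infty(X_\infty)$ rather than merely somewhere in $Z$, and this is handled by the triangle inequality together with the vanishing Hausdorff distance. If one wanted to avoid Theorem~\ref{Gromov-Z} entirely, one could argue directly from the definition of Gromov-Hausdorff distance by choosing, for each $j$, a metric space $Z_j$ in which $X_j$ and $X_\infty$ embed with Hausdorff distance less than $d_{GH}(X_j,X_\infty)+1/j$, and then using the compactness of $X_\infty$ to produce a candidate $x_\infty$ with a diagonal argument; however, the embedding approach above is cleaner and is the standard one.
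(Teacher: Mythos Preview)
Your proof is correct and follows exactly the approach the paper indicates: the paper does not give a detailed proof but simply states that Theorem~\ref{Gromov-Z} implies the Gromov-Hausdorff Bolzano-Weierstrass Theorem, and your argument (embed all $X_j$ into a common compact $Z$ via Theorem~\ref{Gromov-Z}, extract a convergent subsequence of $\varphi_j(x_j)$ by compactness, and use vanishing Hausdorff distance plus closedness of $\varphi_\infty(X_\infty)$ to land the limit in $\varphi_\infty(X_\infty)$) is precisely the intended deduction.
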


Gromov's embedding theorem can also be applied in combination with other extension theorems to obtain the following Gromov-Hausdorff Arzela-Ascoli Theorem.   See also the appendix of a paper of Grove-Petersen \cite{Grove-Petersen} for a detailed proof and prior work of the last named author for a more general statement \cite{Sor-cosmos}.

\begin{thm}[Gromov, Grove-Petersen] \label{Gromov-Arz-Asc}
Given compact metric spaces $X_j \GHto X_\infty$ and $Y_j \GHto Y_\infty$
and equicontinuous functions $f_j: X_j \to Y_j$ in the sense that
\be
\forall \epsilon>0 \,\,\exists \delta_\epsilon>0\textrm{ such that }
d_{X_j}(x,x')< \delta_\epsilon \,\implies \,
d_{Y_j}(f_j(x), f_j(x'))\le \epsilon,
\ee
there exists a subsequence, also denoted $f_j: X_j \to Y_j$,
which converges to a continuous function $f_\infty: X_\infty \to Y_\infty$
in the sense that there exists common compact metric spaces $Z, W,$
and metric 
isometric embeddings $\varphi_j: X_j \to Z$, $\psi_j: Y_j \to W$
such that
\be
\lim_{j\to \infty} \psi_j(f_j(x_j)) = \psi_\infty(f_\infty(x_\infty))
 \textrm{ whenever } \lim_{j\to \infty}\varphi_j(x_j)=\varphi_\infty(x_\infty). 
\ee
Furthermore, if $\Lip(f_j) \le K$ then $\Lip(f_\infty)\le K$.
\end{thm}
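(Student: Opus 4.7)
The plan is to pass to a common ambient space and extract a subsequence of graphs using Hausdorff compactness. First, I would apply the Gromov embedding Theorem \ref{Gromov-Z} twice to obtain compact metric spaces $Z$ and $W$ together with metric isometric embeddings $\varphi_j : X_j \to Z$ and $\psi_j : Y_j \to W$ (for $j \in \{1,2,\dots,\infty\}$) realizing the Gromov--Hausdorff convergence in both parameters. Form the graphs
\[
G_j := \{(\varphi_j(x),\, \psi_j(f_j(x))) : x \in X_j\} \subset Z \times W.
\]
Each $G_j$ is a compact subset of the compact product $Z \times W$, so by the standard compactness of the hyperspace of closed subsets under Hausdorff distance (Blaschke's selection theorem), a subsequence---still indexed by $j$---satisfies $G_j \to G$ in the Hausdorff distance on $Z \times W$ for some closed set $G$.

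Second, I would show that $G$ is the graph of a function $f_\infty : X_\infty \to Y_\infty$. Hausdorff convergence of $\varphi_j(X_j) \to \varphi_\infty(X_\infty)$ and $\psi_j(Y_j) \to \psi_\infty(Y_\infty)$ gives that the projections of $G$ land in $\varphi_\infty(X_\infty)$ and $\psi_\infty(Y_\infty)$ respectively, and that for any $x_\infty \in X_\infty$ a point of the form $(\varphi_\infty(x_\infty), w)$ lies in $G$: pick $x_j \in X_j$ with $\varphi_j(x_j) \to \varphi_\infty(x_\infty)$, extract a further subsequence along which $\psi_j(f_j(x_j))$ converges in the compact space $W$, and use Hausdorff convergence of the graphs. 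Well-definedness is where equicontinuity enters. If $(\varphi_\infty(x_\infty), y_1), (\varphi_\infty(x_\infty), y_2) \in G$, realize them as limits of $(\varphi_j(x_j^{(i)}), \psi_j(f_j(x_j^{(i)})))$ with $i=1,2$. Since $\varphi_j$ is distance preserving,
\[
d_{X_j}(x_j^{(1)}, x_j^{(2)}) = d_Z(\varphi_j(x_j^{(1)}), \varphi_j(x_j^{(2)})) \to 0,
\]
so the uniform modulus forces $d_{Y_j}(f_j(x_j^{(1)}), f_j(x_j^{(2)})) \to 0$, hence $y_1 = y_2$. Define $f_\infty(x_\infty)$ to be this common value.

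Third, the same equicontinuity argument proves the full convergence statement and the continuity of $f_\infty$. Given any sequence $x_j \in X_j$ with $\varphi_j(x_j) \to \varphi_\infty(x_\infty)$, the extracted Blaschke subsequence ensures that every subsequential limit of $\psi_j(f_j(x_j))$ in $W$ is a point of $G$ above $\varphi_\infty(x_\infty)$, and by uniqueness that point equals $\psi_\infty(f_\infty(x_\infty))$; compactness of $W$ then forces the full sequence to converge there. Continuity of $f_\infty$ with the same modulus follows by comparing approximants of two distinct limit points $x_\infty, \tilde x_\infty \in X_\infty$ and passing to the limit in the uniform modulus of the $f_j$. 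For the Lipschitz statement, if $\Lip(f_j) \le K$, then for approximants $x_j \to x_\infty$ and $\tilde x_j \to \tilde x_\infty$ we have
\[
d_W(\psi_j(f_j(x_j)), \psi_j(f_j(\tilde x_j))) \le K\, d_Z(\varphi_j(x_j), \varphi_j(\tilde x_j)),
\]
and passing to the limit in $Z \times W$ yields $\Lip(f_\infty) \le K$.

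The main obstacle I anticipate is ensuring that a single subsequence suffices to produce convergence along every sequence $x_j \to x_\infty$, rather than requiring a further subsequence for each test point---a typical pitfall in Arzel\`a--Ascoli arguments in this setting. Equicontinuity is exactly the tool that resolves this: once the graphs converge in Hausdorff distance and the limit is shown to be a graph via the uniform modulus, no further diagonalization over points of $X_\infty$ is needed.
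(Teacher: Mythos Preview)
The paper does not actually supply a proof of this theorem; it records it as background, attributing the detailed argument to the appendix of Grove--Petersen and to Gromov. So there is no ``paper's own proof'' to compare against line by line. That said, your argument is correct and is a standard route: embed both sequences via Theorem~\ref{Gromov-Z}, pass to Hausdorff-convergent graphs in $Z\times W$ by Blaschke selection, and use the uniform modulus of continuity to show the limit set is a graph and to upgrade subsequential to full convergence. Each step is sound as written; in particular, your handling of the ``single subsequence suffices'' issue via uniqueness of the fiber over each $x_\infty$ is exactly the right mechanism.

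The only remark worth making is that the paper's one-line description (``Gromov's embedding theorem \dots\ in combination with other extension theorems'') points to a slightly different classical strategy: after embedding, one extends each $\psi_j\circ f_j\circ\varphi_j^{-1}$ from $\varphi_j(X_j)$ to a map on all of $Z$ (e.g.\ by McShane--Whitney in the Lipschitz case) and then applies the ordinary Arzel\`a--Ascoli theorem on the fixed compact space $Z$. Your graph-convergence approach avoids the need for an extension theorem and works uniformly for the equicontinuous case, at the mild cost of invoking Blaschke selection; the extension approach is perhaps more elementary in the Lipschitz case but requires separate treatment of the merely equicontinuous case. Both are valid and well known.
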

Examples of Gromov-Hausdorff limits of rotationally
symmetric manifolds with nonnegative scalar curvature
and ADM mass converging to $0$ are provided in
work of the second and third authors \cite{LeeSormani1}.
In particular, they need not converge to Euclidean
space in the Gromov-Hausdorff sense.

\subsection{Intrinsic Flat Convergence}

In \cite{SorWen2}, the third named author and Wenger applied Ambrosio and Kirchheim's notion of an integral current to  define integral current spaces $(X,d,T)$, with $T\in \intcurr_n(\overline{X})$ and $\set(T)=X$ where $\overline{X}$ is the completion of $X$ and  $\set(T)$ is the set of positive density for $T$.  
This integral current structure, $T$, can be represented
by a collection of bi-Lipschitz charts 
\be
\psi_i: A_i \subset \E^n\too U_i\subset X
\ee
such that 
\be
\mathcal{H}^n\left(X \smallsetminus \bigcup_{i=1}^\infty \psi_i(A_i)\right)=0,
\ee
with integer valued Borel weight functions $\theta_i: A_i \to \Z$. 
So integral current spaces are countably $\mathcal{H}^n$ rectifiable
metric spaces endowed with oriented charts and integer weights. In particular, oriented Riemannian manifolds with finite volume can be regarded as integral
current spaces.
There is also the $\bf{0}$ integral current space in each dimension.    The $\bf{0}$ integral current space has current
structure $0$ and no metric space.  

Riemannian manifolds of finite volume are integral current spaces
where $(X,d)$ is the manifold with the intrinsic Riemannian distance
function defined using infimum over the lengths of curves lying within
the manifold.  The integral current structure $T$ is defined by
\be
T(f, \pi_1,...,\pi_n)=\int_M f\, d\pi_1\wedge\cdots \wedge d\pi_n.
\ee

Given an integral current space $M=(X,d,T)$, we can define
$\partial M=(\set(\partial T), d, \partial T)$.  Note that the boundary
is endowed with the restricted metric from the 
metric completion of the original space, $\bar X$, and that
its metric space is a subset of $\bar{X}$.   When $M$ is a 
Riemannian manifold with boundary, $\partial M$ is the
manifold boundary of $M$ endowed with the restricted metric.

Wenger and the third named author used Ambrosio-Kirchheim's notion of $\mass(T)$ and 
the push forward $\varphi_\#T$ to define the intrinsic flat distance
as follows.

\begin{defn}[\cite{SorWen2}]\label{IF-defn} 
Given two $n$-dimensional precompact integral current spaces 
$M_1=(X_1, d_1, T_1)$ and $M_2=(X_2, d_2,T_2)$, the 
intrinsic flat 
distance between the spaces is defined by
\be
d_{\mathcal{F}}\left(M_1, M_2\right)=\inf\left\{d_F^Z(\varphi_{1\#}T_1, \varphi_{2\#}T_2):\,\,\varphi_j: X_j \to Z \right\}
\ee
where the infimum is taken over all  complete metric
spaces $Z$ and all metric isometric embeddings $\varphi_j: X_j \to Z$:
\be
d_Z(\varphi_j(x), \varphi_j(x'))= d_{X_j}(x,x') \qquad \forall x,x' \in X_j.
\ee
\end{defn}

Note the similarity to the definition of the Gromov-Hausdorff distance with the distinction being that the Hausdorff distance between subsets of compact metric spaces, $Z$,
has been replaced by the flat distance between integral
currents in complete metric spaces, $Z$.   Two precompact integral current spaces, $M_i$,
have $d_{\mathcal{F}}\left(M_1, M_2\right)=0$ iff
there is a current preserving isometry between the spaces.   
If $M_i$ are Riemannian manifolds with weight $1$, this means  
there is an orientation preserving isometry between them.

\begin{example}\label{example:intrinsic-flat-topology}
The intrinsic flat distance is an intrinsic notion not
an extrinsic notion.   In the prior section we observed
that graphs $f_j: [0, 1]\to \E^2$ defined in (\ref{ext-ex}) converge
in the flat sense to the graph of $f_\infty$ which is identically $0$.
These graphs are intrinsically isometric to Riemannian manifolds $M_j =[0,\sqrt{2}]$, as seen using the  embeddings $\Psi_j(s)=(s/\sqrt{2}, f_j(s/\sqrt{2}))$.   Since the $\Psi_j$ are not metric isometric embeddings, we cannot use
these embeddings into $Z=\E^2$ to determine the intrinsic flat
limit of the $M_j$.  
The intrinsic flat limit is $M_\infty=[0,\sqrt{2}]$ which can be seen
clearly just by taking the identity map into $Z=[0,\sqrt{2}]$
and taking $A_j=0$ and $B_j=0$.     
\end{example}

In \cite{LeeSormani1}, the last named authors prove that
rotationally symmetric manifolds with nonnegative scalar curvature
whose ADM mass converges to $0$ that have no closed
interior minimal surfaces converge to Euclidean space in
the pointed intrinsic flat sense.   The proof there explicitly
constructs a sequence of metric spaces $Z_j$ and
integral currents $A_j$ and $B_j$ in $Z_j$ such that 
$\partial B_j +A_j=\varphi_{1\#}T_1-\varphi_{2\#}T_2$.
In the Appendix we prove a new theorem which allows one
to determine the intrinsic flat limits of certain sequences of
integral current spaces using explicit $Z_j$.


\subsection{Review of Theorems about Intrinsic Flat Convergence}
In this paper we will prove our results by applying the
following key theorems.

Wenger and the third named author proved
the following embedding theorem for convergent sequences of integral current spaces in \cite{SorWen2}.    The theorem
applies Ambrosio-Kirchheim's lower semicontinuity of mass
\cite{AK}.

\begin{thm}[\cite{SorWen2}]\label{converge}
If a sequence of 
integral current spaces,
$M_{j}=\left(X_j, d_j, T_j\right)$, converges  in the intrinsic flat sense to an 
integral current space,
 $M_\infty=\left(X_\infty,d_\infty,T_\infty\right)$, then
there is a separable
complete metric space, $Z$, and metric isometric embeddings  $\varphi_j: X_j \to Z$ such that
$\varphi_{j\#}T_j$ flat converges to $\varphi_{\infty\#} T_\infty$ in $Z$
and thus converge weakly as well.   

In particular we have lower semicontinuity
of mass
\be \label{semicont-mass}
\mass(T_\infty)\le \liminf_{j\to\infty}\mass(T_j).
\ee
\end{thm}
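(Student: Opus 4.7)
The plan is to construct a single ambient metric space $Z$ out of the sequence of ambient spaces witnessing the definition of $d_{\mathcal{F}}(M_j, M_\infty) \to 0$, by gluing them along their common copy of $X_\infty$. From the definition of intrinsic flat convergence, for each $j$ there exist a complete separable metric space $Z_j$ and metric isometric embeddings $\varphi_j^{(j)}: X_j \to Z_j$ and $\psi^{(j)}: X_\infty \to Z_j$ such that $d_F^{Z_j}(\varphi^{(j)}_{j\#} T_j, \psi^{(j)}_{\#} T_\infty) \to 0$. The idea is to isometrically realize all $Z_j$ inside one complete separable $Z$ so that the images of $X_\infty$ all coincide; then flat convergence in $Z$ will be inherited from flat convergence in each $Z_j$.

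More concretely, first I would form a pseudometric on the disjoint union $W := X_\infty \sqcup \bigsqcup_j X_j$ by setting the distance between points in the same summand to be the original distance, and for $x \in X_j$, $y \in X_\infty$, putting $d_W(x, y) := d_{Z_j}(\varphi^{(j)}_j(x), \psi^{(j)}(y))$; for $x \in X_j$ and $x' \in X_k$ with $j \ne k$, taking the infimum over $y \in X_\infty$ of $d_W(x,y) + d_W(y,x')$. The key verification is the triangle inequality, which reduces to the triangle inequality in each individual $Z_j$ together with the triangle inequality in $X_\infty$ (used to bound chains that bounce between the $Z_j$'s via $X_\infty$). After checking this, I would take the metric quotient by the zero-distance relation, then the metric completion, and call the resulting space $Z$. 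Separability follows from separability of each $Z_j$ together with the fact that the images of the $Z_j$'s all share the dense subset coming from $X_\infty$ (or, if $X_\infty$ is empty, one enlarges the construction slightly to avoid this edge case).

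With the natural isometric embeddings $Z_j \hookrightarrow Z$ induced by this quotient, we obtain metric isometric embeddings $\varphi_j : X_j \to Z$ for $j \in \{1,2,\dots,\infty\}$ such that for every $j$ the images of $X_\infty$ under $\varphi_\infty$ and of $X_j$ via the factorization through $Z_j$ agree with their images in $Z_j$. Since isometric embeddings of ambient spaces push integral currents forward preserving mass and flat norm, one has $d_F^Z(\varphi_{j\#} T_j, \varphi_{\infty\#} T_\infty) \le d_F^{Z_j}(\varphi^{(j)}_{j\#} T_j, \psi^{(j)}_{\#} T_\infty) \to 0$, which is flat convergence of the pushforwards in a common $Z$. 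Weak convergence is then automatic from flat convergence for integral currents, since if $T_j - T_\infty = A_j + \partial B_j$ with $\mass(A_j) + \mass(B_j) \to 0$, then for any bounded tuple $(f, \pi_1,\dots,\pi_n)$ of Lipschitz functions with bounded supports one gets $|T_j(f,\pi_1,\dots,\pi_n) - T_\infty(f,\pi_1,\dots,\pi_n)| \le C(\mass(A_j) + \mass(B_j)) \to 0$ by the standard Lipschitz bound on the action of currents.

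Finally, the lower semicontinuity inequality \eqref{semicont-mass} is immediate from the Ambrosio--Kirchheim lower semicontinuity of mass under weak convergence in a fixed complete metric space, applied to $\varphi_{j\#} T_j \to \varphi_{\infty\#} T_\infty$ in $Z$, together with the fact that the isometric embedding $\varphi_\infty$ preserves mass. The main technical obstacle in this plan is the careful verification that the glued pseudometric really satisfies the triangle inequality (so that the quotient is an honest metric space into which each $Z_j$ embeds isometrically) and that separability survives the completion step; the rest is a straightforward unwinding of the definitions together with the standard results of Ambrosio--Kirchheim.
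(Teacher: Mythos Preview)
The paper does not prove this theorem; it is quoted as background from \cite{SorWen2}, with only the remark that it relies on Ambrosio--Kirchheim's lower semicontinuity of mass. So there is no proof in the paper to compare against directly. Your overall strategy---gluing the ambient spaces $Z_j$ along their common copies of $X_\infty$ and then invoking Ambrosio--Kirchheim---is indeed the construction used in \cite{SorWen2}.

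There is, however, a genuine inconsistency in your concrete implementation. You announce the plan of embedding all of the $Z_j$ into a single $Z$, but then you build $W := X_\infty \sqcup \bigsqcup_j X_j$, gluing only the $X_j$'s, and afterwards invoke ``natural isometric embeddings $Z_j \hookrightarrow Z$'' that your construction does not supply. This matters: the integral currents $A_j$, $B_j$ witnessing that $d_F^{Z_j}(\varphi^{(j)}_{j\#} T_j, \psi^{(j)}_\# T_\infty)$ is small need not be supported in $\varphi^{(j)}_j(X_j) \cup \psi^{(j)}(X_\infty)$; they live in $Z_j$. Without an isometric embedding of (the relevant part of) $Z_j$ into $Z$, you cannot push $A_j, B_j$ forward, and hence you have no way to bound $d_F^Z(\varphi_{j\#} T_j, \varphi_{\infty\#} T_\infty)$ from above by $d_F^{Z_j}(\varphi^{(j)}_{j\#} T_j, \psi^{(j)}_\# T_\infty)$. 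The fix is simply to carry out the gluing on $\bigsqcup_j Z_j$, identifying the images $\psi^{(j)}(X_\infty)$ with a single copy of $X_\infty$, exactly as your first paragraph proposes; the triangle-inequality verification and the separability and completion steps go through in the same way. Once that is repaired, the remainder of your argument (flat convergence implies weak convergence, then Ambrosio--Kirchheim lower semicontinuity of mass together with mass-preservation under isometric pushforward) is correct.
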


Wenger proved the following compactness theorem (stated in the language of integral current
spaces here):

\begin{thm}[Wenger~\cite{Wenger-compactness}]\label{thm-Wenger-compactness}  
Let $V_0, A_0, D > 0$ and let $M_j=(X_j, d_j, T_j)$
be a sequence of integral current spaces of the same dimension
such that
\be \label{eqn-compact-1}
\mass(T_j)\le V_0 \textrm{ and } \mass(\partial T_j) \le A_0
\ee 
and 
\be \label{eqn-compact-2}
\diam(X_j) \le D.
\ee
Then there exists a subsequence of $M_j$ (still denoted $M_j$) and an
 integral current space, $M_\infty$,
of the same dimension
(possibly the $\bf{0}$ space) such that
\be
\lim_{j\to\infty}d_{\mathcal{F}}\left(M_j, M_\infty \right)=0.
\ee
\end{thm}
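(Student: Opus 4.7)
The plan is to reduce to Ambrosio--Kirchheim's compactness theorem \cite{AK} for integer rectifiable currents inside a single complete metric space. I would construct a common ambient space $Z$ together with metric isometric embeddings $\varphi_j : X_j \to Z$ of uniformly bounded image, extract a flat-convergent subsequence of the push-forwards $\varphi_{j\#}T_j$ in $Z$, and read off the intrinsic flat limit from the limiting current.

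To construct $Z$, fix basepoints $p_j \in X_j$ and form the amalgamated metric space $Y := \bigsqcup_j \bar X_j / \!\sim$ in which all the $p_j$ are identified to a common point $p$, with distance $d_Y(x, y) := d_j(x, p_j) + d_k(p_k, y)$ when $x \in X_j$ and $y \in X_k$ with $j \ne k$. The diameter bound \eqref{eqn-compact-2} then forces each $\varphi_j(X_j)$ to sit isometrically inside the closed ball of radius $D$ around $p$ in $Y$. Complete $Y$ and use the Kuratowski embedding to include the completion isometrically into $Z := \ell^\infty(\bar Y)$. With $T_j' := \varphi_{j\#}T_j$, the bounds \eqref{eqn-compact-1} give $\mass(T_j') \le V_0$, $\mass(\partial T_j') \le A_0$, and $\spt T_j' \subset B_Z(p, D)$ for all $j$.

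The main obstacle is that the Ambrosio--Kirchheim compactness theorem requires the supports to lie in a common \emph{compact} subset of $Z$, and closed balls of $\ell^\infty$ are merely bounded, not compact. To overcome this I would exploit that each $T_j'$ is concentrated on a countably $\mathcal{H}^n$-rectifiable Borel set together with the uniform lower density bound for integer rectifiable currents. A diagonal $\epsilon$-net argument, using the uniform mass bound $V_0$, then yields a subsequence for which, given any $\epsilon > 0$, there is a common finite $\epsilon$-net $N_\epsilon \subset Z$ with $\mass\bigl(T_j' \rstr (Z \smallsetminus T_\epsilon(N_\epsilon))\bigr) < \epsilon$ uniformly in $j$, where $T_\epsilon(N_\epsilon)$ denotes the $\epsilon$-neighborhood. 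This confines the essential mass of the $T_j'$ to a fixed compact subset of $Z$, so the Ambrosio--Kirchheim compactness and closure theorems apply and produce an integral current $T_\infty$ in $Z$ with $T_j' \to T_\infty$ in the flat topology.

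To finish, set $M_\infty := (\set(T_\infty), d_Z|_{\set(T_\infty)}, T_\infty)$, interpreted as the zero integral current space if $T_\infty = \mathbf{0}$ (which is allowed since thin pieces of $M_j$ can disappear in the flat limit). Definition~\ref{IF-defn} applied with the common ambient $Z$ and the embeddings $\varphi_j$ converts the flat convergence $T_j' \to T_\infty$ in $Z$ into $d_{\mathcal{F}}(M_j, M_\infty) \to 0$. The real work is the step promoting the uniform diameter bound to a uniform compact-support bound up to arbitrarily small mass; once that is in hand, the rest is bookkeeping inside the Ambrosio--Kirchheim framework.
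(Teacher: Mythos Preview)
The paper does not prove this theorem; it is quoted from Wenger's paper \cite{Wenger-compactness} as background, so there is no ``paper's own proof'' to compare against. I will therefore evaluate your sketch on its own terms.

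Your overall architecture is the right one and is indeed how Wenger proceeds: embed a subsequence isometrically into a single complete metric space, arrange that the push-forwards are supported in a common compact set (up to negligible mass), and then invoke the Ambrosio--Kirchheim closure/compactness theorem. You have also correctly isolated where the entire difficulty lies: passing from the uniform diameter bound \eqref{eqn-compact-2} to a \emph{uniform equi-compactness} statement for the supports.

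However, the mechanism you propose for that step does not work as stated. You appeal to ``the uniform lower density bound for integer rectifiable currents,'' but no such bound exists at a fixed scale $\epsilon>0$. What Ambrosio--Kirchheim give is the pointwise asymptotic statement $\Theta^n_*(\|T\|,x)\ge 1$ for $\|T\|$-a.e.\ $x$; this says nothing uniform about $\|T_j\|(B(x,\epsilon))$ across different $j$ at a fixed $\epsilon$, so it cannot by itself bound the cardinality of an $\epsilon$-net in $\spt T_j$. Your wedge-sum construction $Y=\bigsqcup_j \bar X_j/(p_j\sim p)$ also contributes nothing toward compactness: the $X_j$'s sit in $Y$ essentially disjointly, and a closed ball in $\ell^\infty(\bar Y)$ is no more compact than before.

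What Wenger actually does to bridge this gap is substantially deeper than a diagonal net argument. He proves a quantitative ``thick--thin'' decomposition: using the isoperimetric inequality for integral currents in metric spaces, one shows that each $T_j$ can be written as a sum of a current whose support admits an $\epsilon$-net of cardinality bounded in terms of $V_0,A_0,n,\epsilon$ only, plus a remainder of mass $<\epsilon$. The uniform cardinality bound on the nets is what allows one (via Gromov-type arguments) to embed a subsequence into a common \emph{compact} metric space, after which Ambrosio--Kirchheim applies. If you want to complete your proof you must supply this decomposition; the phrase ``diagonal $\epsilon$-net argument using the uniform mass bound'' hides exactly the content of Wenger's paper.
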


Additional key definitions and theorems needed in this
paper were introduced by the third named author in \cite{Sormani-AA}.

\begin{defn}[\cite{Sormani-AA}]
Using the notation of Theorem~\ref{converge}, we say that $p_j\in X_j$ converges to $p\in X_\infty$ if
\be
\lim_{j\to\infty} \varphi_j(p_j)=\varphi_\infty(p) \in Z,
\ee
and we say $p_j$ disappears if
\be
\lim_{j\to\infty} \varphi_j(p_j)=z \in Z
\ee
but $z\notin \varphi_\infty(X_\infty)$.  In this definition
we have already chosen a sequence of embeddings as
in Theorem~\ref{converge}.
\end{defn}

\begin{rmrk}\label{thin-wells}
Note that if a sequence $(X_j, d_j, T_j)\Fto(X_\infty,d_\infty,T_\infty)$
and $(X_j, d_j)\GHto (X_\infty, d_\infty)$ then by the 
Gromov Embedding Theorem we can choose the same
sequence of embeddings and same target space $Z$ for
both notions of convergence.  By the Gromov-Hausdorff 
Bolzano-Weierstrass Theorem, no points disappear.  The possibility of disappearance
occurs when the intrinsic flat limit is smaller than the 
Gromov-Hausdorff limit due to either the cancellation or
collapse of certain regions in the sequence.   This occurs for
example in sequences of rotationally symmetric manifolds with increasingly thin gravity wells studied
in work of the second and third authors \cite{LeeSormani1}
where the Gromov-Hausdorff limit of the sequence is
Euclidean space with a line segment attached and the intrinsic
flat limit is Euclidean space.   The points in the thin wells
disappeared under intrinsic flat convergence but have limits
lying on the line segment in the Gromov-Hausdorff limit.
It is also possible that a sequence has no Gromov-Hausdorff
limit at all as in the Ilmanen Example (cf. \cite{SorWen2})
in which case many points disappear in the limit.
 \end{rmrk}

\begin{lem}[{\cite[Lemma 4.1]{Sormani-AA}}]\label{ball-converge} 
Suppose $M_i=(X_i, d_i, T_i)$ are integral current spaces 
which converge in the intrinsic flat sense to a 
nonzero integral current space 
$M_\infty=(X_\infty, d_\infty, T_\infty)$. If $p_j\to p_\infty\in \overline{X}_\infty$, then there exists a subsequence  such that 
for almost every $r>0$, 
\be
S(p_j, r) \Fto S(p_\infty, r) 
\ee
where  $S(p_i,\rho):=(\overline{B(p_i,\rho)}, d_i, T_i\rstr \overline{B(p_i,\rho)})$. 
\end{lem}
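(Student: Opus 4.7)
The plan is to use the embedding theorem to reduce everything to a single ambient space and then apply Ambrosio--Kirchheim slicing to commute ``restriction to a ball'' with flat convergence. First I would invoke Theorem~\ref{converge} to obtain a complete separable metric space $Z$ and metric isometric embeddings $\varphi_j:X_j\to Z$ (for $j\in\N\cup\{\infty\}$) such that $\varphi_{j\#}T_j$ flat converges to $\varphi_{\infty\#}T_\infty$ in $Z$. By the definition of $p_j\to p_\infty$ (which is phrased via a common embedding), after possibly enlarging $Z$, we may also arrange that $z_j:=\varphi_j(p_j)\to z_\infty:=\varphi_\infty(p_\infty)$ in $Z$.

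Next I would set $\rho(z):=d_Z(z,z_\infty)$, which is $1$-Lipschitz, and fix for each $j$ a decomposition $\varphi_{j\#}T_j-\varphi_{\infty\#}T_\infty=A_j+\partial B_j$ with $\mass(A_j)+\mass(B_j)\to 0$. Applying the Ambrosio--Kirchheim slicing inequality yields
\be
\int_0^\infty \mass(\langle B_j,\rho,r\rangle)\,dr \le \mass(B_j) \to 0,
\ee
so after extracting a subsequence (by Fatou) we have $\mass(\langle B_j,\rho,r\rangle)\to 0$ for almost every $r>0$. For such an $r$ the identity $(\partial B_j)\rstr\{\rho\le r\}=\partial(B_j\rstr\{\rho\le r\})+\langle B_j,\rho,r\rangle$ gives the flat-decomposition
\be
\varphi_{j\#}T_j\rstr\{\rho\le r\}-\varphi_{\infty\#}T_\infty\rstr\{\rho\le r\} = \bigl(A_j\rstr\{\rho\le r\}+\langle B_j,\rho,r\rangle\bigr)+\partial\bigl(B_j\rstr\{\rho\le r\}\bigr),
\ee
whose $A$- and $B$-masses both tend to $0$. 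Hence for a.e. $r$, the restrictions to the closed ball $\overline{B}_Z(z_\infty,r)$ flat converge inside $Z$.

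The final step is to replace the ball around $z_\infty$ by the ball around $z_j$. Since $\varphi_j$ is a metric isometric embedding, $\varphi_{j\#}(T_j\rstr\overline{B}(p_j,r)) = \varphi_{j\#}T_j\rstr\overline{B}_Z(z_j,r)$, so I only need to estimate the contribution of the symmetric difference $\overline{B}_Z(z_j,r)\triangle\overline{B}_Z(z_\infty,r)\subset\{|\rho-r|\le\epsilon_j\}$ where $\epsilon_j:=d_Z(z_j,z_\infty)\to0$. Flat convergence together with the uniform bound $\mass(\varphi_{j\#}T_j)\le C$ forces weak convergence of the mass measures $\|\varphi_{j\#}T_j\|\weaklyto\|\varphi_{\infty\#}T_\infty\|$, and the monotone function $r\mapsto \|\varphi_{\infty\#}T_\infty\|(\{\rho\le r\})$ is continuous off a countable set; combined with one more application of slicing one obtains $\mass(\varphi_{j\#}T_j\rstr\{r-\epsilon_j<\rho\le r+\epsilon_j\})\to 0$ for a.e. $r$. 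Adding this to the previous step gives the desired flat convergence $S(p_j,r)\Fto S(p_\infty,r)$.

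The main obstacle is this last step, namely controlling the ``annular slop'' between balls of the same radius around slightly different centers. This is exactly what forces the conclusion to hold only for almost every $r$ (one must avoid the countable set of radii at which the limit mass measure concentrates on a sphere), and it is the point where the extraction of the subsequence interacts nontrivially with the slicing argument. Everything else is bookkeeping inside a fixed ambient space $Z$.
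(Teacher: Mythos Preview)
The paper does not prove this lemma; it is quoted from \cite{Sormani-AA} as background, so there is no in-paper argument to compare against. Your overall strategy---push everything into a common $Z$ via Theorem~\ref{converge} and then use Ambrosio--Kirchheim slicing to pass ball-restrictions through the flat limit---is the standard one, and your argument is correct through the penultimate paragraph.

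The gap is in the last step. The assertion that ``flat convergence together with the uniform bound $\mass(\varphi_{j\#}T_j)\le C$ forces weak convergence of the mass measures $\|\varphi_{j\#}T_j\|\weaklyto\|\varphi_{\infty\#}T_\infty\|$'' fails on both counts. First, no uniform mass bound on $T_j$ is assumed in the lemma. Second, even granting such a bound, flat (hence weak) convergence of currents yields only \emph{lower semicontinuity} of mass, not weak convergence of the mass measures to $\|T_\infty\|$; the zigzag example recalled in Section~\ref{sect-back} already has $\mass(T_j)=\sqrt{2}\not\to 1=\mass(T_\infty)$. So you cannot conclude that $\mass\bigl(\varphi_{j\#}T_j\rstr\{|\rho-r|\le\epsilon_j\}\bigr)\to 0$.

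The repair is to avoid estimating the mass of $T_j$ on the moving annulus altogether and control only the \emph{flat} norm. Write, with $\rho_j(z):=d_Z(z,z_j)$,
\[
\varphi_{j\#}T_j\rstr\{\rho_j\le r\}-\varphi_{\infty\#}T_\infty\rstr\{\rho\le r\}
=(A_j+\partial B_j)\rstr\{\rho_j\le r\}\ \pm\ \varphi_{\infty\#}T_\infty\rstr\bigl(\{\rho_j\le r\}\,\triangle\,\{\rho\le r\}\bigr).
\]
The second term is supported in $\{|\rho-r|\le\epsilon_j\}$ and has mass at most $\|T_\infty\|(\{|\rho-r|\le\epsilon_j\})\to 0$ for every $r$ with $\|T_\infty\|(\{\rho=r\})=0$, hence for a.e.\ $r$. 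For the first term, use the slicing identity $(\partial B_j)\rstr\{\rho_j\le r\}=\partial(B_j\rstr\{\rho_j\le r\})-\langle B_j,\rho_j,r\rangle$, so that the flat norm is bounded by $\mass(A_j)+\mass(B_j)+\mass(\langle B_j,\rho_j,r\rangle)$. Since $\int_0^\infty\mass(\langle B_j,\rho_j,s\rangle)\,ds\le\mass(B_j)$, passing to a subsequence with $\sum_j\mass(B_j)<\infty$ gives $\mass(\langle B_j,\rho_j,r\rangle)\to 0$ for a.e.\ $r$. This produces the desired flat convergence without any call on $\mass(T_j)$ or on weak convergence of mass measures.
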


Due to the possibility of disappearing points under intrinsic flat
convergence, one does not have such strong Bolzano-Weierstrass
and Arzela-Ascoli Theorems as for Gromov-Hausdorff convergence.  There are a few proven
by the third named author in \cite{Sormani-AA}.  The following theorem
is particularly useful for this paper.

\begin{thm}[{\cite[Theorem 6.1]{Sormani-AA}}]\label{Flat-Arz-Asc} 
Fix $K>0$.
Suppose that $M_i=(X_i, d_i, T_i)$ is a sequence of integral current spaces with
$M_i \Fto M_\infty$
and that $\Psi_i: X_i \to W$ are Lipschitz maps into
a compact metric space $W$ with 
\be
\Lip(\Psi_i)\le K.
\ee
Then a subsequence of $\Psi_i$ (still denoted $\Psi_i$) converges to a Lipschitz map
$\Psi_\infty: X_\infty \to W$ with 
\be
\Lip(\Psi_\infty)\le K.
\ee  More
specifically, there exist metric isometric embeddings 
of the subsequence, $\varphi_i: X_i \to Z$,
such that $d_F^Z(\varphi_{i\#} T_i , \varphi_{\infty\#} T_\infty)\to 0$
and for any sequence $p_i\in X_i$ converging to $p\in X_\infty$,
one has converging images,
\be\label{Fip}
\lim_{i\to\infty} \Psi_i(p_i) = \Psi_\infty(p).
\ee
\end{thm}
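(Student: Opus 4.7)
The plan is to reduce the statement to a classical equi-Lipschitz Arzela--Ascoli argument inside a common ambient metric space, using Theorem~\ref{converge} to set up the embedding and the compactness of $W$ to extract convergent values.

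First I would invoke Theorem~\ref{converge} to obtain a complete separable metric space $Z$ and metric isometric embeddings $\varphi_i : X_i \to Z$ for $i \in \{1,2,\ldots,\infty\}$ along which $\varphi_{i\#} T_i$ flat-converges, and hence weakly converges, to $\varphi_{\infty\#} T_\infty$ in $Z$. Then I would fix a countable dense subset $\{p_n\}_{n=1}^{\infty} \subset X_\infty$ and, for each $n$, produce approximators $p_{n,i} \in X_i$ with $\varphi_i(p_{n,i}) \to \varphi_\infty(p_n)$ in $Z$. Such approximators exist because $\varphi_\infty(p_n) \in \set(\varphi_{\infty\#}T_\infty)$ is a point of positive lower density, so weak convergence of the mass measures forces $\varphi_i(X_i) = \set(\varphi_{i\#}T_i)$ to meet every open neighborhood of $\varphi_\infty(p_n)$ for all sufficiently large $i$.

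With approximators in hand, I would perform a Cantor diagonal extraction: each $\Psi_i(p_{n,i})$ lies in the compact space $W$, so on passing to a subsequence one obtains $\Psi_i(p_{n,i}) \to w_n \in W$ for every $n$. Set $\Psi_\infty(p_n) := w_n$. The equi-Lipschitz hypothesis, together with $d_{X_i}(p_{n,i}, p_{m,i}) = d_Z(\varphi_i(p_{n,i}), \varphi_i(p_{m,i})) \to d_{X_\infty}(p_n, p_m)$, yields $d_W(w_n, w_m) \le K\, d_{X_\infty}(p_n, p_m)$, so $\Psi_\infty$ is $K$-Lipschitz on the dense subset $\{p_n\}$ and extends uniquely to a $K$-Lipschitz map on all of $X_\infty$ by completeness of $W$. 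To verify~\eqref{Fip}, suppose $p_i \in X_i$ satisfies $\varphi_i(p_i) \to \varphi_\infty(p)$; given $\varepsilon > 0$, choose $p_n$ with $d_{X_\infty}(p_n, p) < \varepsilon$, so that $\limsup_i d_{X_i}(p_i, p_{n,i}) = d_Z(\varphi_\infty(p), \varphi_\infty(p_n)) < \varepsilon$, and then the triangle inequality through $\Psi_i(p_{n,i})$, combined with the $K$-Lipschitz bounds on $\Psi_i$ and $\Psi_\infty$ and with $\Psi_i(p_{n,i}) \to \Psi_\infty(p_n)$, gives $\limsup_i d_W(\Psi_i(p_i), \Psi_\infty(p)) \le 2K\varepsilon$; letting $\varepsilon \to 0$ finishes the proof.

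I expect the principal obstacle to be the production of the approximators $p_{n,i}$. Under Gromov--Hausdorff convergence their existence is immediate from Gromov's embedding theorem and Theorem~\ref{Gromov-B-W}, but intrinsic flat convergence permits points to disappear (Remark~\ref{thin-wells}), so one cannot merely appeal to a Gromov--Hausdorff Bolzano--Weierstrass. Instead one must work directly with the integral current structure, using that $\set(T_\infty)$ consists of points of positive lower $n$-density and that weak convergence of currents passes to lower bounds on mass of open balls in $Z$.
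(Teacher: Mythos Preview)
The paper does not actually supply a proof of this theorem: it is quoted as a background result from \cite{Sormani-AA} (Theorem~6.1 there), so there is no ``paper's own proof'' to compare against. That said, your proposed argument is the natural one and is essentially correct.

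Your approach is the standard equi-Lipschitz Arzela--Ascoli scheme transported into a common ambient space via Theorem~\ref{converge}, and the only nonroutine point is exactly the one you flag: producing approximators $p_{n,i}\in X_i$ with $\varphi_i(p_{n,i})\to \varphi_\infty(p_n)$. Your justification is right in spirit but could be tightened: what you actually use is that weak convergence of currents implies lower semicontinuity of the mass measure on open sets, i.e.\ $\|\varphi_{\infty\#}T_\infty\|(U)\le \liminf_i \|\varphi_{i\#}T_i\|(U)$ for open $U\subset Z$. Since $\varphi_\infty(p_n)\in\set(\varphi_{\infty\#}T_\infty)$ has positive lower density, every open ball about it carries positive limit mass, hence eventually positive $\|\varphi_{i\#}T_i\|$-mass, forcing $\varphi_i(X_i)$ to meet the ball. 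Spelling this out (rather than appealing loosely to ``weak convergence of the mass measures'') would make the step airtight. One minor cosmetic point: in your verification of \eqref{Fip} you write $\limsup_i d_{X_i}(p_i,p_{n,i}) = d_Z(\varphi_\infty(p),\varphi_\infty(p_n))$; this is in fact an honest limit (both endpoints converge in $Z$ and the embeddings are isometric), so you may as well say so.
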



\subsection{Graphical Hypersurfaces of Euclidean Space} \label{section:AF-graph}

We first recall that the spatial $n$-dimensional Schwarzschild manifold (with boundary) of mass $m>0$ can be isometrically embedded into $\mathbb{E}^{n+1}$ as the graph of the function $S_m(|x|)$ where 
\be\label{S_m-eqn}
	S_m(r) =\left\{
	\begin{array}{ll}
	 \sqrt{ 8m (r - 2m)}  &\mbox{ for } n =3\\
	 \sqrt{2m} \log \left( \frac{r}{\sqrt{2m}} + \sqrt{\frac{r^2}{2m} -1} \right) &\mbox{ for } n =4\\
	S_\infty + O(r^{2-\frac{n}{2}}) &\mbox{ for } n \ge 5,
	\end{array}\right.
\ee
for some constant $S_\infty$ depending on $n$ and $m$. The function $S_m$ arises from solving the ODE for a rotationally symmetric graph with zero scalar curvature.

For asymptotically flat graphs, one can define the ADM mass as follows. 
\begin{defn}[\cite{Lam-graph}]
Let $f$ be a $C^2$ function defined on an exterior region of $\E^n$. The \emph{ADM mass} of the graph of $f$ is defined by
\begin{align} \label{eq:mass} 
	m &= \frac{1}{2(n-1)  \omega_{n-1}}  \lim_{r\rightarrow \infty}\int_{|x|=r} \frac{1}{ 1 + |D f|^2 } \sum_{i,j=1}^n  (f_{ii} f_j - f_{ij} f_i)  \frac{x^j}{|x|}  \, d\mathcal{H}^{n-1},
\end{align}
where $\omega_{n-1}$ is the volume of the unit $(n-1)$-sphere
and $D f$ is the gradient of $f$ as a function on $\E^n$.
\end{defn}
The above definition coincides with the usual definition of the ADM mass under additional assumptions on the fall-off rates of $|D f|$ and $|D^2 f|$, see \cite{Lam-graph, Huang-Wu:2013}. 

\begin{thm}[\cite{Reilly:1973}]
Let $f$ be a $C^2$ function defined on an open subset of $\mathbb{E}^n$. Then the scalar curvature of the graph of $f$ is 
\[
R=\sum_{j=1}^n\frac{\partial}{\partial x_j} \left[\sum_{i=1}^n \left( \frac{f_{ii} f_j - f_{ij} f_i}{1+|D f|^2}\right)\right].
\]
\end{thm}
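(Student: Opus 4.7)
The plan is to verify the identity by computing both sides and showing they agree. Since the graph $M = \{(x,f(x))\}\subset\mathbb{E}^{n+1}$ is a hypersurface in flat Euclidean space, the Gauss equation gives
\[
R = H^2 - |A|^2,
\]
so the strategy is to write $H$ and $|A|^2$ explicitly in terms of $f$ and its Euclidean partial derivatives, compute the divergence on the right-hand side, and match terms.

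First, I would set up the standard graph data: with $w := \sqrt{1+|Df|^2}$, the induced metric is $g_{ij}=\delta_{ij}+f_i f_j$ with inverse $g^{ij}=\delta_{ij}-\frac{f_i f_j}{w^{2}}$, and the upward unit normal is $\nu = \frac{1}{w}(-Df,1)$. A direct computation (using that only the $\partial_{n+1}$ component of $\bar\nabla_{X_i}X_j$ survives when $X_i = \partial_i+f_i\partial_{n+1}$) gives the second fundamental form $h_{ij}=f_{ij}/w$. From here, $h^i{}_j = g^{ik}h_{kj} = \frac{f_{ij}}{w}-\frac{f_i\gamma_j}{w^{3}}$ where $\gamma_j := \sum_k f_k f_{kj}$; note the useful identities $\gamma_j = w\,\partial_j w$ and $\langle Df,Dw\rangle = \beta/w$, where $\beta := \sum_{i,j}f_i f_j f_{ij} = (Df)^TD^2f(Df)$.

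Next I would compute $H = \mathrm{tr}(h^i{}_j)$ and $|A|^2 = \sum_{i,j}h^i{}_j h^j{}_i$ in terms of $\alpha := \Delta f$, $\beta$, $|D^2f|^2$, and $|\gamma|^2$. Expanding and collecting:
\[
H^2 = \frac{\alpha^{2}}{w^{2}} - \frac{2\alpha\beta}{w^{4}} + \frac{\beta^{2}}{w^{6}}, \qquad |A|^2 = \frac{|D^2f|^{2}}{w^{2}} - \frac{2|\gamma|^{2}}{w^{4}} + \frac{\beta^{2}}{w^{6}},
\]
where in computing the cross-term of $|A|^2$ one uses $\sum_{i,j}f_{ij}f_j\gamma_i = |\gamma|^{2}$ (by symmetry of $f_{ij}$) and $\sum_{i,j}f_if_j\gamma_i\gamma_j = \beta^{2}$. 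Subtracting,
\[
R = H^2 - |A|^2 = \frac{\alpha^{2}-|D^2f|^{2}}{w^{2}} + \frac{2(|\gamma|^{2}-\alpha\beta)}{w^{4}}.
\]

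Finally, I would tackle the right-hand side by rewriting the divergence in a more manageable form: the vector field whose divergence is claimed to equal $R$ is
\[
V^j = \frac{f_j\Delta f - \sum_i f_i f_{ij}}{1+|Df|^2} = \frac{f_j\alpha}{w^{2}} - \frac{\partial_j w}{w}.
\]
Then $\mathrm{div}(V)$ splits into $\partial_j(f_j\alpha/w^{2})$ and $-\partial_j(\partial_j w/w)$. Using $\partial_j w = \gamma_j/w$ and the identity $\sum_j\partial_j\gamma_j = |D^2f|^{2} + \langle Df,D\Delta f\rangle$, the Laplacian of $w$ becomes $\Delta w = \frac{|D^2f|^{2}+\langle Df,D\Delta f\rangle}{w} - \frac{|\gamma|^{2}}{w^{3}}$. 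Substituting and collecting, the $\langle Df,D\Delta f\rangle/w^{2}$ terms cancel, and the expression reduces to exactly $\frac{\alpha^{2}-|D^2f|^{2}}{w^{2}}+\frac{2(|\gamma|^{2}-\alpha\beta)}{w^{4}}$, matching $R$.

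The only real obstacle here is bookkeeping: the identity is a purely algebraic consequence of Gauss's equation for a graph, but one must keep track of the weights $w^{2}, w^{3}, w^{4}, w^{6}$ and the contractions $\alpha,\beta,|\gamma|^{2},|D^2f|^{2}$ carefully so that the divergence and the curvature expressions align. The clarifying step is recognizing that $V^j$ decomposes into the $\alpha f_j/w^{2}$ and $-\partial_j w/w$ pieces, which makes the cancellations transparent.
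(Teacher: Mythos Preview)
Your computation is correct: the Gauss equation $R=H^{2}-|A|^{2}$ for a hypersurface in flat $\E^{n+1}$, together with the explicit graph formulas for $h_{ij}$, $H$, and $|A|^{2}$, yields exactly the expression $(\alpha^{2}-|D^{2}f|^{2})/w^{2}+2(|\gamma|^{2}-\alpha\beta)/w^{4}$, and your expansion of the divergence---via the decomposition $V^{j}=\alpha f_{j}/w^{2}-\partial_{j}w/w$ and the identity $\sum_{j}\partial_{j}\gamma_{j}=|D^{2}f|^{2}+\langle Df,D\Delta f\rangle$---matches it term by term.

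As for comparison: the paper gives no proof of this statement at all; it simply quotes it as a known result due to Reilly.  Reilly's original argument proceeds more structurally, exhibiting the $k$th elementary symmetric function $\sigma_{k}$ of the principal curvatures of a graph as a Euclidean divergence (via the Newton tensors), of which the scalar curvature formula $R=2\sigma_{2}$ is the special case $k=2$.  Your approach trades that generality for a self-contained computation tailored to the scalar curvature alone, which is perfectly adequate for the purposes of this paper.
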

The above formula of scalar curvature is closely related to the definition of the ADM mass~\eqref{eq:mass}. In fact, let  $\Omega_h$ be a bounded subset of $\mathbb{E}^n$ such that $\partial \Omega_h = f^{-1}(h) :=\Sigma_h$.  Combining this theorem with the divergence theorem, and using the definition of ADM mass above, Lam~\cite{Lam-graph}  obtained that for any regular value $h$ of $f$,
\begin{align} \label{eq:Lam-mass}
	2(n-1) \omega_{n-1} m = \int_{\mathbb{R}^n \setminus \Omega_h} R\, dx + \int_{\Sigma_h} \frac{|D f|^2 }{1+ |Df|^2} H_{\Sigma_h} d \mathcal{H}^{n-1},
\end{align}
where  $H_{ \Sigma_h}$ is the mean curvature of $\Sigma_h$ in the hyperplane $\left\{x^{n+1}=h\right\}$ (with respect to inward pointing normal). For an entire graph, by setting $\Omega_h = \emptyset$, it immediately implies that the ADM mass is nonnegative~\cite{Lam-graph}. We also note that under the nonnegative scalar curvature assumption, the ADM mass always exists, though it may be infinite.

The first named author and Wu~\cite{Huang-Wu:2013, Huang-Wu-Penrose} proved that under the nonnegative scalar curvature assumption, the mean curvature of an asymptotically flat hypersurface in $\mathbb{E}^{n+1}$ (complete or with a minimal boundary) has a sign and that $H_{\Sigma_h}$ is nonnegative for almost every regular value $h$. They further concluded that if the ADM mass is zero, then there is no regular value $h$ and thus the hypersurface must be a hyperplane.

Based on previous work, the first two named authors studied the weighted mean curvature integral appearing in~\eqref{eq:Lam-mass}, which may be regarded as a quasi-local mass  for level sets. Together with the Minkowski inequality, they were able to prove a differential inequality for the volume functions of the level sets, as long as the volume function is greater than $\omega_{n-1}(2m)^{\frac{n-1}{n-2}}$. The differential inequality guarantees that the volumes of the level sets grow as fast as they do for Schwarzschild spaces of comparable mass. It is natural to define the height of the level set whose volume realizes this volume, but technically there might be no level set with this volume and the differential inequality may not be differentiable at this volume, so we define the height $h_0$ as follows.  
  
\begin{defn}[{\cite[Definition 3.7]{Huang-Lee-Graph}}]\label{definition:h_0}
Let $n\ge3$, $r_0, \gamma, D>0$, $\alpha<0$, and $r\ge r_0$. Let $M\in\Gr$ have ADM mass $m>0$. We may choose $\Psi$ and $f$ so that the graph has upward pointing mean curvature (\cite{Huang-Wu:2013, Huang-Wu-Penrose}). We define the height
\[
	h_0 =\sup \left\{ h: \mathcal{H}^{n-1}\left(f^{-1}(h)\right) \le 2\omega_{n-1}(2m)^{\frac{n-1}{n-2}} \mbox{ for regular value $h$}\right\}.
\]
\end{defn}
The set of $h$ with the desired property in Definition~\ref{definition:h_0} is non-empty (by the Penrose inequality~\cite{Lam-graph} in the case of minimal boundary). Note $h_0$ always exists and is finite because the level sets $f^{-1}(h)$ move outward as $h$ increases and the volume function is monotone nondecreasing in $h$~\cite[Proof of Lemma 3.3]{Huang-Lee-Graph}.

\begin{thm}[\cite{Huang-Lee-Graph}] \label{HL-Slab}
Let $n\ge3$, $r_0, \gamma, D>0$, $\alpha<0$, and $r\ge r_0$. We  normalize $\Psi(M)$ and $f$ so  that the graph has upward pointing mean curvature and that $h_0=0$. For any $\epsilon>0$, there exists a $\delta=\delta(\epsilon, n, \gamma, \alpha, r)>0$ such that
 if $M\in\Gr$ has ADM mass less than 
$\delta$, then  
\be
f(x)<\epsilon\,\text{ for all }\,|x|<r,
 \ee
  or in other words,
$\Psi(\Omega(r))=\Psi(M)\cap (B(r)\times\rr)$ lies below the plane $\E^n\times\left\{\epsilon\right\}$.
\end{thm}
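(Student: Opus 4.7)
The plan is to deduce the height bound from a quantitative volume-growth estimate for the level-set area function $V(h) := \mathcal{H}^{n-1}(f^{-1}(h))$, as suggested in the paragraph preceding Definition~\ref{definition:h_0}. After normalizing so that $h_0 = 0$ and the graph has upward mean curvature, apply Lam's identity \eqref{eq:Lam-mass} on each sub-level set $\Omega_h$ to see that the weighted flux $\int_{f^{-1}(h)} \tfrac{|Df|^2}{1+|Df|^2}\, H_{\Sigma_h}\, d\mathcal{H}^{n-1}$ is at most $2(n-1)\omega_{n-1}\, m$. The Minkowski inequality for outward-minimizing mean-convex hypersurfaces of $\mathbb{E}^n$ gives $\int_{f^{-1}(h)} H_{\Sigma_h}\, d\mathcal{H}^{n-1} \ge (n-1)\omega_{n-1}^{1/(n-1)}\, V(h)^{(n-2)/(n-1)}$, and combining the two via a Cauchy--Schwarz redistribution of the weight $|Df|^2/(1+|Df|^2)$ against the co-area expression for the growth of $|\Omega_h|$ produces a differential inequality $V'(h) \ge \Phi_m(V(h))$ sharp on the Schwarzschild profile; this inequality is valid as long as $V(h) \ge 2\omega_{n-1}(2m)^{(n-1)/(n-2)}$, which by the definition of $h_0$ holds for all regular $h > 0$.

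Integrating upward from $h = 0$ with initial datum $V(0) \ge 2\omega_{n-1}(2m)^{(n-1)/(n-2)}$ gives the Schwarzschild comparison $V(h) \ge V_{\mathrm{Sch}}(h; m) := \omega_{n-1} r_m(h)^{n-1}$, where $r_m(h)$ satisfies $S_m(r_m(h)) = h$. A direct computation from \eqref{S_m-eqn} --- for instance $r_m(h) = h^2/(8m) + 2m$ in dimension three --- shows $r_m(\epsilon) \to \infty$ as $m \to 0$ with $\epsilon$ fixed, so that $V(\epsilon) > \omega_{n-1} r^{n-1}$ whenever $m < \delta$ for some $\delta = \delta(\epsilon, n, r)$.

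To convert this area excess into the pointwise bound $f < \epsilon$ on $B(r)$, I would invoke the outward-minimizing condition \eqref{cond6}. Let $\Omega_h^*$ denote the bounded Euclidean region enclosed by a regular level set $f^{-1}(h)$, which is well-defined and strictly nested in $h$ because $f \to +\infty$ at infinity in dimensions three and four by \eqref{cond5}, tends to a finite limit in higher dimensions, and has no interior critical points outside $\overline{U}$ under the structural hypotheses of $\Gr$. Suppose toward contradiction that some $x_0 \in B(r)$ had $f(x_0) > \epsilon$; then for a regular $h$ slightly below $\epsilon$ we would still have $V(h) > \omega_{n-1} r^{n-1}$ and $B(r) \not\subset \Omega_h^*$, so the enlargement $\Omega_h^* \cup B(r)$ strictly contains $\Omega_h^*$, and outward-minimizing combined with the perimeter bookkeeping $\mathcal{H}^{n-1}(\partial(\Omega_h^* \cup B(r))) \le \omega_{n-1} r^{n-1} + \mathcal{H}^{n-1}(\partial\Omega_h^* \setminus B(r))$ forces $\mathcal{H}^{n-1}(\partial\Omega_h^* \cap B(r)) \le \omega_{n-1} r^{n-1}$. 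Combining this swap with the connectedness of the super-level set $\{f \ge h\}$ and the fact that $\Omega_h^*$ already contains a neighborhood of $\overline{U} \subset B(r_0/2) \subset B(r)$ then produces the desired contradiction with $V(h) > \omega_{n-1} r^{n-1}$.

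The main obstacle is the first step: deriving the Schwarzschild-sharp differential inequality. The weight $|Df|^2/(1+|Df|^2)$ in Lam's identity degenerates where the graph is nearly flat, and the Cauchy--Schwarz rearrangement needed to absorb it against $V'(h)$ while retaining the sharp Minkowski constant is delicate --- this is the analytic core of the argument in \cite{Huang-Lee-Graph}. Once the sharp volume growth is established, the outward-minimizing swap and the connectedness of the level sets are relatively soft topological ingredients.
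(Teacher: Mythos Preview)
The paper does not actually prove this theorem: it is quoted from \cite{Huang-Lee-Graph}, and the only justification given in the present paper is the sentence ``Although this statement does not appear in \cite{Huang-Lee-Graph}, it is a direct consequence of Theorems 3.10 and 4.5 from \cite{Huang-Lee-Graph}.'' Your outline of the differential-inequality step (Lam's identity \eqref{eq:Lam-mass} combined with the Minkowski inequality to obtain a Schwarzschild-sharp lower bound for the growth of $V(h)$ above $h_0=0$) is exactly the mechanism the paper describes in the paragraph before Definition~\ref{definition:h_0}, so on that part you are aligned with the intended source.

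The gap is in your final step. The outward-minimizing swap you perform is correct and is in fact the same computation used in the proof of Theorem~\ref{exists-lim}: it yields only
\[
\mathcal{H}^{n-1}\bigl(f^{-1}(h)\cap B(r)\bigr)\le \omega_{n-1}r^{n-1},
\]
which does \emph{not} contradict $V(h)>\omega_{n-1}r^{n-1}$, since the level set may carry arbitrarily much area outside $B(r)$. Your appeal to ``connectedness of the super-level set'' does not close this; a connected outward-minimizing region anchored near the origin can still fail to contain $B(r)$ while having large perimeter elsewhere. In other words, large $V(\epsilon)$ tells you $\Omega^*_\epsilon \not\subset B(r)$, not $B(r)\subset\Omega^*_\epsilon$, and it is the latter you need.

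What is missing is precisely the second ingredient the paper cites, Theorem~4.5 of \cite{Huang-Lee-Graph}, which uses the exterior information --- the gradient bound \eqref{cond3} and the asymptotic Schwarzschild control \eqref{cond5} --- to pin down the height of $f$ on the annulus $B(r)\smallsetminus B(r_0/2)$ once the interior volume growth (Theorem~3.10 there) has bounded the height at which the level sets exit $B(r_0/2)$. The $\delta$ in the statement depends on $\gamma$ and $\alpha$ for this reason; your purely topological argument in step 5 never invokes \eqref{cond3} or \eqref{cond5}, which is a sign that it cannot be complete. So your assessment that the last step consists of ``relatively soft topological ingredients'' understates the work required: the exterior asymptotic estimate is a genuine second analytic input, not a bookkeeping afterthought.
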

Although this statement does not appear in \cite{Huang-Lee-Graph}, it is a direct consequence of Theorems 3.10 and 4.5 from \cite{Huang-Lee-Graph}. These two theorems were the main ingredients in the proof of following stability theorem with respect to the flat distance. 

\begin{thm}[{\cite[Theorems 5.2 and 5.3]{Huang-Lee-Graph}}] \label{HL-Flat}
Let $n\ge3$, $r_0, \gamma, D>0$, $\alpha<0$, and $r\ge r_0$. We vertically normalize $\Psi(M)$ such that $h_0 = 0$ for all $M$ in $\Gr$. For any $\epsilon>0$, there exists a $\delta=\delta(\epsilon, n, \gamma, D, \alpha, r)>0$ such that
  if $M\in\Gr$ has ADM mass 
less than $\delta$, then
\be
 d_{F_{B(r)}}(\, \Psi(M)\, , \, \E^n\times\left\{0\right\} \, )  \,<\,  \epsilon,
 \ee
 where $B(r)$ is the Euclidean ball of radius $r$ centered at the origin  in $\mathbb{E}^{n+1}$. 
  \end{thm}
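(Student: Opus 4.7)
My plan is to exhibit explicit currents $A\in\intcurr_n(B(r))$ and $B\in\intcurr_{n+1}(B(r))$ satisfying $\Psi(M)-\E^n\times\{0\}=A+\partial B$ in $B(r)\subset\E^{n+1}$, with small total mass. The natural $(n+1)$-current $B$ is the signed slab between the graph of $f$ and the hyperplane $\{t=0\}$, lying over the vertical projection of $B(r)$ to $\E^n$. After the normalization $h_0=0$ of Definition~\ref{definition:h_0}, I take
\[
B=\lbrack\{(x,t)\in B(r):\, x\in\E^n\smallsetminus U,\; \min(0,f(x))\le t\le\max(0,f(x))\}\rbrack,
\]
oriented so that $\partial B=\Psi(M)-(\E^n\times\{0\})+L$ in $B(r)$, where $L$ collects the cylindrical lateral over $\partial B(r)\cap(\E^n\times\R)$ and, if $U\ne\emptyset$, a horizontal cap above $\partial U$ at the height of $\Psi(\partial M)$. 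Setting $A=-L$ reduces the theorem to bounding $\mass(B)+\mass(L)$.

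To bound $\mass(B)$, Theorem~\ref{HL-Slab} gives $f(x)<\epsilon$ on $B(r)\cap\E^n$, so the portion of $B$ where $f\ge 0$ contributes at most $C\epsilon r^n$. For the portion where $f<0$, the definition of $h_0$ combined with a Penrose-type mass-area inequality for the minimal boundary bounds $\mathcal{H}^{n-1}(\{f=0\}\cup\partial M)\le Cm^{(n-1)/(n-2)}$; the outward-minimizing hypothesis \eqref{cond6} and the Euclidean isoperimetric inequality then bound the $n$-volume of the sublevel set $\{x:f(x)<0\}$ by $Cm^{n/(n-2)}$. Since this set sits in a vertical slab controlled by $\Depth$ and \eqref{cond3}, its contribution to $\mass(B)$ vanishes with $m$. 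For $\mass(L)$, the cylindrical piece over $\partial B(r)\cap\E^n$ lies in a slab of height $O(\epsilon)$ and has $(n-1)$-cross-section of size $O(r^{n-1})$, giving $O(\epsilon r^{n-1})$.

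The main obstacle is the cap above $U$: it is a horizontal $n$-disk at height $h_{\min}:=\min_{\partial U}f$ with $n$-area at most $\mathcal{H}^n(U)$, which is \emph{not} a priori small. I would circumvent this by modifying $B$ near $\partial U$, routing the slab down to $\{t=0\}$ through a thin vertical tube just outside $\partial U$; the resulting extra boundary projects onto a slight thickening of $\partial U$ and adds mass bounded by $\mathcal{H}^{n-1}(\partial M)\cdot|h_{\min}|=O(m^{(n-1)/(n-2)})\cdot O(\epsilon)$. Both key geometric inputs---the Penrose-type mass-area bound and the outward-minimizing property---are encoded in $\Gr$ but not in Theorem~\ref{HL-Slab} itself, and extracting them is where most of the work lies. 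Once assembled, the estimates sum to $\mass(A)+\mass(B)\le C(\epsilon+m^{1/(n-2)})$, and choosing $m<\delta$ sufficiently small completes the argument.
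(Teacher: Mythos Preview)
This theorem is not proved in the present paper; it is quoted from \cite{Huang-Lee-Graph} (Theorems~5.2 and~5.3 there) and used as a black box. So there is no proof here to compare your attempt against. That said, your strategy---filling the slab between the graph and the hyperplane and absorbing the leftover boundary pieces into $A$---is the natural one and is essentially how the result is obtained in \cite{Huang-Lee-Graph}.

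Two comments on the details. First, the ``main obstacle'' you identify is not actually an obstacle: by Lam's Penrose inequality one has $\mathcal{H}^{n-1}(\partial U)\le\omega_{n-1}(2m)^{(n-1)/(n-2)}$, and then the Euclidean isoperimetric inequality gives $\mathcal{H}^n(U)\le Cm^{n/(n-2)}$, which \emph{is} small. So the horizontal cap over $U$ can simply be absorbed into $A$ without any rerouting.

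Second, your claimed bound $|h_{\min}|=O(\epsilon)$ is not justified and is likely false: Theorem~\ref{HL-Slab} only bounds $f$ from \emph{above} by $\epsilon$, giving no lower bound. What you do have is $|h_{\min}|\le C(D,\gamma,r_0)$ from the depth hypothesis \eqref{cond7} together with \eqref{cond3}. That is enough, since the vertical tube over $\partial U$ then has mass at most $C(D,\gamma,r_0)\cdot\mathcal{H}^{n-1}(\partial U)=O(m^{(n-1)/(n-2)})$. With this correction (or with the simpler direct cap estimate above), your argument goes through.
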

The above two theorems do not actually require all of the hypotheses used to define $\Gr$, but we state the theorem this way for simplicity and for ease of comparison with Theorem \ref{thm-main}.

\begin{rmrk}
Note that the normalization hypothesis $h_0=0$ is not needed in Theorem~\ref{thm-main} and Theorem~\ref{thm-pted} because the statements in both theorems are invariant under vertical translations of $f$. 
\end{rmrk}


\section{Existence of a Limit}\label{sect-lim}

We now begin our proof of Theorem \ref{thm-main}. We fix $n\ge3$, $r_0, \gamma, D>0$,  $\alpha<0$, and $r\ge r_0$ once and for all. Given $M\in \Gr$, we define 
\be
\Omega(r)=\Psi^{-1}( B(r)\times\rr)
\ee 
as in the statement of Theorem \ref{thm-main}.   We also
define
\be
\Sigma(r)=\partial\Omega(r)\smallsetminus \partial M = \Psi^{-1}(\partial B(r)\times\rr).
\ee
All these spaces are endowed with the restricted metric from
$M$.  

 Our first task is to extract a limit. That is, we prove Theorem~\ref{exists-lim} that the family of $\Omega(r)$ coming from $\Gr$ is precompact in the intrinsic flat topology.  


\begin{thm}\label{exists-lim}
Let $r$ be fixed. Given a sequence $M_j \in \Gr$  there is a subsequence (still denoted $M_j$) and an integral current space
\be
\Omega_\infty(r)=(X_\infty, d_\infty, T_\infty)
\ee
such that
\be
\lim_{j\to\infty}d_{\mathcal{F}}\left(\Omega_j(r), \Omega_\infty(r)\right) = 0.
\ee
There also exists a 1-Lipschitz map
\be
\Psi_\infty: \Omega_\infty(r) \too \overline{B(r)}\times \R \subset \E^{n+1}
\ee
which is a limit of $\Psi_j$ as in Theorem~\ref{Flat-Arz-Asc}.
\end{thm}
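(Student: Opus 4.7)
The plan is to establish the two uniform controls needed to invoke the compactness machinery: Wenger's Compactness Theorem (\thmref{thm-Wenger-compactness}) applied to the sequence $\Omega_j(r)$ to produce an intrinsic flat limit $\Omega_\infty(r)$, followed by the intrinsic flat Arzel\`a--Ascoli Theorem (\thmref{Flat-Arz-Asc}) applied to the Lipschitz maps $\Psi_j$ to produce the limit map $\Psi_\infty$. Thus the task reduces to verifying (i) uniform bounds on $\diam(\Omega_j(r))$, $\mass(T_j)$, and $\mass(\partial T_j)$, and (ii) that $\Psi_j$ is $1$-Lipschitz with image in a common compact subset of $\E^{n+1}$.

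For the diameter, I would split $\Omega_j(r) = \Omega_j(r_0) \cup (\Omega_j(r)\smallsetminus\Omega_j(r_0))$. On the outer annular region, the bound $|Df_j|\le\gamma$ from \eqref{cond3} makes the graph metric comparable to the Euclidean metric on $B(r)\smallsetminus B(r_0)$, so this part has diameter at most $C(r,\gamma)$. On $\Omega_j(r_0)$, the depth condition \eqref{cond7} places every point within distance $D$ of $\Sigma_j(r_0)$, and $\Sigma_j(r_0)$ itself, being the graph of $f_j|_{\partial B(r_0)}$ with $|D f_j|\le\gamma$, has Riemannian diameter at most $\pi r_0\sqrt{1+\gamma^2}$. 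For the boundary mass, $\partial\Omega_j(r) = \Sigma_j(r)\cup\partial M_j$; the first piece has mass at most $(1+\gamma^2)^{(n-1)/2}\mathcal{H}^{n-1}(\partial B(r))$, and the second piece is isometric to $\partial U_j$ whose perimeter is bounded by $\mathcal{H}^{n-1}(\partial B(r_0/2))$ by the outward-minimizing condition in \eqref{cond6} applied to level sets $f_j^{-1}(h)$ as $h$ approaches the boundary value.

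The volume (mass) bound is the subtle part. I would estimate
\[
\mass(T_j) \;=\; \int_{B(r)\smallsetminus U_j}\sqrt{1+|Df_j|^2}\,dx \;\le\; \vol(B(r)) + \int_{B(r)\smallsetminus U_j}|Df_j|\,dx.
\]
By the coarea formula in $\E^n$ the last integral equals $\int_\R \mathcal{H}^{n-1}(f_j^{-1}(h)\cap B(r))\,dh$. Since $U_j\subset B(r_0/2)\subset B(r)$, condition \eqref{cond6} (outward-minimizing with $B(r)$ as the comparison region) gives $\mathcal{H}^{n-1}(f_j^{-1}(h)\cap B(r))\le \mathcal{H}^{n-1}(\partial B(r))$ for almost every $h$. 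Finally, since $\Psi_j$ is a Riemannian isometric embedding it is $1$-Lipschitz from $(\Omega_j(r), d_{M_j})$ to $\E^{n+1}$, so projection onto the vertical axis shows that the oscillation of $f_j$ over $B(r)\smallsetminus U_j$ is at most $\diam(\Omega_j(r))$, which is uniformly bounded by the previous step. Hence $\mass(T_j)\le V_0$, and Wenger's theorem yields a subsequence with $\Omega_j(r)\Fto\Omega_\infty(r)$.

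To produce $\Psi_\infty$, I would use that each $\Psi_j$ is $1$-Lipschitz (by the length-preserving property of the isometric embedding) and note that, after a vertical translation of the ambient $\E^{n+1}$ (which does not alter the intrinsic geometry of $M_j$), the diameter bound forces $\Psi_j(\Omega_j(r))\subset \overline{B(r)}\times[-H,H]$ for a uniform $H$. \thmref{Flat-Arz-Asc} applied with $K=1$ and the compact target $W=\overline{B(r)}\times[-H,H]$ then extracts a further subsequence so that $\Psi_j$ converges to a $1$-Lipschitz $\Psi_\infty:\Omega_\infty(r)\to \overline{B(r)}\times\R$. The main obstacle is precisely the volume estimate above, since the interior region $\Omega_j(r_0)$ has no \emph{a priori} gradient control and the bound must be extracted entirely from the level-set geometry given by \eqref{cond6} together with the depth-driven oscillation bound on $f_j$; the diameter, boundary mass, and Lipschitz control are comparatively direct consequences of \eqref{cond3}, \eqref{cond6}, and \eqref{cond7}.
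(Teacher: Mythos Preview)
Your proposal is correct and follows essentially the same approach as the paper: verify the hypotheses of Wenger's Compactness Theorem via the gradient bound \eqref{cond3}, the depth bound \eqref{cond7}, and the coarea/outward-minimizing estimate for the level sets, then apply \thmref{Flat-Arz-Asc} to the $1$-Lipschitz embeddings $\Psi_j$. You are in fact slightly more careful than the paper in two places---you bound $\mathcal{H}^{n-1}(\partial M_j)$ separately (the paper's stated bound on $\vol(\partial\Omega_j(r))$ appears to address only $\Sigma_j(r)$), and you make explicit the vertical translation needed to land $\Psi_j(\Omega_j(r))$ in a fixed compact slab---but the argument is the same.
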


\begin{rmrk}
Note that this lemma does not require ADM mass to converge to $0$.
\end{rmrk}

\begin{rmrk}
A stronger compactness result was proven in the
rotationally symmetric case by the third named author with LeFloch \cite{LeFloch-Sormani-1}
for $\Omega_j(r_0)$ of uniformly bounded
depth, $\Depth(\Omega_j(r),\Sigma_j(r_0))\le D_0$,
and Hawking mass, $m_H(\Sigma_j(r_0))\le m_0$,
that lie within
symmetric manifolds, $M_j$, with nonnegative
scalar curvature that have no closed interior minimal surfaces.
Specifically they prove there 
is a subsequence  
\be
\Omega_j(r_0)\Fto \Omega_\infty(r_0)
\ee
where $\Omega_\infty(r_0)$ is a rotationally symmetric
integral current space which has weakly nonnegative scalar
curvature and $m_H(\Sigma(r_0))\le m_0$.   In addition the
Hawking masses converge to the generalized Hawking mass
of the limit space and the limit space has generalized nonnegative
scalar curvature.  One key step in
that theorem is the proof that the limit space is not the $\bf{0}$
space.  In Theorem~\ref{exists-lim} we do not yet elliminate the
possibility that $\Omega_\infty(r)=\bf{0}$ nor do we prove
$\Omega_\infty(r)$ has curvature and Hawking mass bounds.
This would be an interesting question.
\end{rmrk}

\begin{proof}
We first check the hypotheses of Wenger's Compactness Theorem (cf. Theorem \ref{thm-Wenger-compactness}). Because of the gradient bound $|D f_j|\le \gamma$ for $|x|\ge r_0$, it follows that 
\be
\vol(\partial \Omega_j(r))\le \omega_{n-1}r^{n-1}\sqrt{1+\gamma^2}.
\ee
The gradient bound also means that the distance between any two points in $\Omega_j(r)\smallsetminus \Omega_j(r_0)$ is bounded by $\pi r \sqrt{1+\gamma^2}$. 

Since $\Depth(\Omega_j(r_0), \Sigma_j(r_0))\le D$ by Definition \ref{def:hypotheses}, 
it follows that
\be
\diam(\Omega_j(r)) \le 2D + \pi r \sqrt{1+\gamma^2}.
\ee
For the volume bound, we use the coarea formula to estimate
\begin{align}
\vol(\Omega_j(r))
&= \int_{B(r)\smallsetminus U_j} \sqrt{1+|D f_j|^2}\, d\mathcal{L}^n\\
&\le  \int_{B(r)\smallsetminus U_j} (1+|D f_j|) \, 	d\mathcal{L}^n \\
&\le \vol(B(r))+ \int_{-\infty}^\infty \mathcal{H}^{n-1}(f_j^{-1}(h)\cap B(r))\,dh.
\end{align}
 
\begin{figure}[here] 
   \centering
   \includegraphics[width=0.5\textwidth]{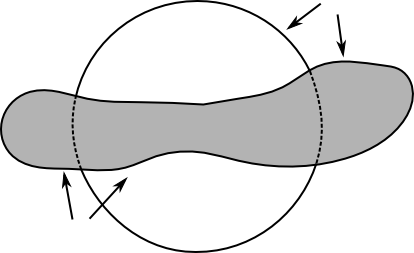}
         			\put(-115, 85){$B(r)$}
      			\put(-37, 110){$S'$}
			    \put(-148, 6){$S$}
			    \put(-100, 50){$E$}
   \caption{$S=\partial^*(E)$ and $S'=\partial^*(E\cup B(r))$}
   			\label{figure:outer-minimizing}
\end{figure}
In order to estimate the volumes of the level sets, we claim that if $S\subset \E^n$ is an outward-minimizing hypersurface, then $\mathcal{H}^{n-1}(S\cap B(r))\le \mathcal{H}^{n-1}(\partial B(r))$. To see this, we set $S=\partial^*E$, where $\partial^*$ denotes the reduced boundary. 
 Let $S'=\partial^*(E\cup B(r))$. Then $\mathcal{H}^{n-1}(S)\le \mathcal{H}^{n-1}(S')$ by the outward-minimizing property of~$S$.    
 By removing the intersection,  $\mathcal{H}^{n-1}(S\smallsetminus S')\le \mathcal{H}^{n-1}(S'\smallsetminus S)$.    
 The claim then follows because $S\smallsetminus S'=S\cap B(r)$ and $S'\smallsetminus S\subset \partial B(r)$. See Figure~\ref{figure:outer-minimizing}.

Since almost every level set of $f$ is outward-minimizing, the claim shows that
 $\mathcal{H}^{n-1}(f_j^{-1}(h)\cap B(r))\le \mathcal{H}^{n-1}(\partial B(r))$ for almost every $h$.    Moreover, $f_j^{-1}(h)\cap B(r)$ must actually be empty away from an interval of length equal to $\diam\Omega_j(r)$. Thus
\be
\vol(\Omega_j(r))\le  \vol(B(r)) + \diam\Omega_j(r) \vol(\partial B(r))
\ee
and we already bounded $ \diam\Omega_j(r)$. Hence we can apply Wenger's Compactness Theorem (cf. Theorem~\ref{thm-Wenger-compactness}) to extract a subsequence of $\Omega_j(r)$ converging in the intrinsic flat sense.   As in his compactness theorem, this limit space may be $\bf{0}$.

The last conclusion involving $\Psi_\infty$ then follows immediately from Theorem \ref{Flat-Arz-Asc}, since each $\Psi_j$ is clearly  a distance non-increasing map into compact space $\overline{B(r)}\times [-D, D]$.
\end{proof}


\section{Geometric Estimates}\label{sect-geom}

We have shown in Theorem~\ref{exists-lim} that
\be
\Omega_j(r) \Fto \Omega_\infty(r)=(X_\infty, d_\infty, T_\infty).
\ee
Then immediately
\be
\partial \Omega_j(r) \Fto \partial \Omega_\infty(r)=(\set(\partial T_\infty), d_\infty, \partial T_\infty).
\ee 
In this section we use the fact that the
manifolds are graphs and have ADM mass converging to $0$
and apply prior work of the first two named
authors \cite{Huang-Lee-Graph} and Lam \cite{Lam-graph} to provide key
geometric estimates.
In Lemma~\ref{inner-gone}, we prove that the inner boundaries disappear and the
outer boundaries converge:
\be
\Sigma_j(r)\Fto \Sigma_\infty(r)=\partial \Omega_\infty(r).
\ee
In Lemmas~\ref{Vol-Bounds-} and~\ref{Vol-Bounds+}
we bound the volumes of $\Omega_j(r)$ from above and below: showing $\vol(\Omega_j(r))\to \vol(B(r))$.   In Lemma~\ref{thm-in-disk}
we prove that $\Psi_\infty(\Omega_\infty)$ lies in a Euclidean
disk.

\subsection{Inner boundaries disappear}

\begin{lem}\label{inner-gone}
Given the setup of Theorem \ref{exists-lim}, if we further assume that the ADM mass of $M_j$ converges to zero, then 
\be
\Sigma_j(r) \Fto \Sigma_\infty(r):=\partial \Omega_\infty(r)=(\set(\partial T_\infty), d_\infty, \partial T_\infty).
\ee
\end{lem}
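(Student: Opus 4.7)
The plan is to decompose $\partial\Omega_j(r) = \Sigma_j(r) \sqcup \partial M_j$ (these two pieces are disjoint because $U_j \subset B(r_0/2)$ while $r \geq r_0$), and to show that the inner minimal boundary $\partial M_j$ has vanishing $(n-1)$-volume as $m_{\mathrm{ADM}}(M_j) \to 0$, so it disappears in the intrinsic flat limit. Combined with the convergence $\partial\Omega_j(r) \Fto \partial\Omega_\infty(r)$ (immediate from Theorem~\ref{exists-lim} and the continuity of $\partial$ under flat convergence, as already noted in the paragraph preceding the lemma), this forces $\Sigma_j(r) \Fto \partial\Omega_\infty(r) = \Sigma_\infty(r)$. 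When $\partial M_j = \emptyset$ the claim is immediate, so I assume throughout that $\partial M_j \neq \emptyset$.

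The first step is a quantitative area bound on the inner boundary. I would normalize vertically so that $h_0 = 0$ in the sense of Definition~\ref{definition:h_0}; this is harmless, since the integral current space structures of $\Sigma_j(r)$ and $\Omega_j(r)$ are intrinsic and unchanged by a vertical translation of the graph. The defining property of $h_0$ then provides the uniform estimate $\mathcal{H}^{n-1}(f_j^{-1}(h)) \leq 2\omega_{n-1}(2m_{\mathrm{ADM}}(M_j))^{(n-1)/(n-2)}$ for every regular value $h < 0$. Letting $h$ decrease to the minimum value of $f_j$ and invoking continuity of $(n-1)$-Hausdorff measure on the smooth level-set family (alternatively, the graph Penrose inequality of~\cite{Huang-Wu-Penrose}) transfers the bound to $\partial M_j$ itself, so $\mathcal{H}^{n-1}(\partial M_j) \to 0$.

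Next, I would fix metric isometric embeddings $\varphi_j : \Omega_j(r) \to Z$ into a common complete separable metric space as provided by Theorem~\ref{converge}, with $\varphi_{j\#}T_j$ flat-converging to $\varphi_{\infty\#}T_\infty$ in $Z$. Passing to boundaries yields $\varphi_{j\#}\partial T_j \Fto \varphi_{\infty\#}\partial T_\infty$ in $Z$. I then decompose $\partial T_j = T_{\Sigma_j(r)} + T_{\partial M_j}$ as integer rectifiable currents supported on disjoint sets. Since flat norm is dominated by mass and
\[
\mathbf{M}(\varphi_{j\#} T_{\partial M_j}) \leq \mathbf{M}(T_{\partial M_j}) = \mathcal{H}^{n-1}(\partial M_j) \to 0,
\]
the triangle inequality gives
\[
d_F^Z\bigl(\varphi_{j\#} T_{\Sigma_j(r)},\; \varphi_{\infty\#}\partial T_\infty\bigr) \to 0.
\]
Because both $\Sigma_j(r)$ and $\Omega_j(r)$ are endowed with the restricted metric from $M_j$, each $\varphi_j|_{\Sigma_j(r)}$ remains a metric isometric embedding into $Z$, and similarly $\varphi_\infty$ restricts to a metric isometric embedding of $\Sigma_\infty(r)$. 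The definition of intrinsic flat distance then yields $d_{\mathcal{F}}(\Sigma_j(r), \Sigma_\infty(r)) \to 0$.

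The main obstacle is the first step: securing the area estimate $\mathcal{H}^{n-1}(\partial M_j) \to 0$. This is the only place where the graph structure, the minimality of $\partial M_j$, and the mass-decay hypothesis all come into play simultaneously; the remainder is a formal manipulation of the current-space decomposition and the isometric embeddings into $Z$.
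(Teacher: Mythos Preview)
Your proof is correct and follows essentially the same approach as the paper: establish $\mathcal{H}^{n-1}(\partial M_j)\to 0$ and then exploit the decomposition $\partial\Omega_j(r)=\Sigma_j(r)+\partial M_j$ as currents. The paper streamlines both steps slightly, invoking Lam's Penrose inequality directly for the area bound (rather than your $h_0$ level-set argument, whose limiting step is the delicate part you yourself flag), and comparing $\Sigma_j(r)$ to $\partial\Omega_j(r)$ inside $M_j$ itself via $d_{\mathcal{F}}(\Sigma_j,\partial\Omega_j)\le d_F^{M_j}([\Sigma_j],[\partial\Omega_j])\le \mathbf{M}([\partial M_j])$, then finishing with the triangle inequality for $d_{\mathcal{F}}$ rather than passing through the common space $Z$.
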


\begin{proof}
Viewed as integral currents:
\be
\lbrack \partial\Omega_j(r) \rbrack-\lbrack \Sigma_j(r) \rbrack 
=\lbrack \partial M_j \rbrack.
\ee
By Lam's Penrose inequality \cite{Lam-graph}, we know that $\vol(\partial M_j) \le  \omega_{n-1}(2m_j)^{\frac{n-1}{n-2}}$, where $m_j$ is the ADM mass of $M_j$.    Thus
\be 
\lim_{j\to\infty}\mass\lbrack \partial M_j \rbrack=0.
\ee
By the definition of the intrinsic flat distance and the
fact that $\Sigma_j$ and $\partial \Omega_j$ are
endowed with the restricted metric from $M_j$, we have
\begin{eqnarray}
d_{\mathcal{F}}\left(\Sigma_j, \partial \Omega_j\right)
&\le& 
d_F^{M_j}(\lbrack \Sigma_j(r) \rbrack, \lbrack \partial\Omega_j(r))\rbrack)\\
&\le & \mass\lbrack \partial M_j \rbrack \, \to \, 0.
\end{eqnarray}
Since $\partial \Omega_j \Fto \partial \Omega_\infty$,
we have $\Sigma_j \Fto \partial \Omega_\infty$.
\end{proof}

\subsection{Volume Bounds}
In order to apply Theorem~\ref{HL-Slab}, throughout this section we adopt the convention that $\Psi$ and $f$ are chosen so that the graph has upward pointing mean curvature and that $\Psi(M)$ is vertically normalized such that $h_0=0$, where $h_0$ is defined by Definition~\ref{definition:h_0}. (See Section~\ref{section:AF-graph}.)  

By definition of $h_0$, for any regular value $h<h_0=0$, 
\be
	\mathcal{H}^{n-1}(f^{-1}(h)) = \vol(\Psi(M)\cap (\E^n\times\left\{h\right\})) < 2\omega_{n-1} (2m)^{\frac{n-1}{n-2}}, 
\ee
which immediately implies that for any $\epsilon>0$, there exists $\delta=\delta(\epsilon,n)$ such that if the ADM mass of $M\in \Gr$ is less then $\delta$, then 
\be \label{zero-height}
\mathcal{H}^{n-1}(f^{-1}(h)) < \epsilon.
\ee
Using this we can show that the part of $\Psi(M)$ lying under the plane $\E^n\times\left\{0\right\}$ must have small volume.

\begin{lem}\label{Vol-Bounds-}
For any $\epsilon>0$, there exists $\delta=\delta(\epsilon, n, \gamma, D, r)>0$ such that if $M\in \Gr$ has mass less than $\delta$, then 
\be
\vol(\Omega^-(r))<\epsilon
\ee
where $\Omega^-(r) := \Psi^{-1}(B(r)\times (-\infty, 0))$.
\end{lem}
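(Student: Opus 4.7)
The plan is to estimate $\vol(\Omega^-(r))$ by pushing forward to the base $\E^n$ via $\Psi$ and using the coarea formula. Since $\Omega^-(r)$ corresponds to $B(r)\cap\{f<0\}$ under the graph map, its volume equals $\int_{B(r)\cap\{f<0\}}\sqrt{1+|Df|^2}\,dx$. Applying $\sqrt{1+|Df|^2}\le 1+|Df|$ and the coarea formula for $f$,
\[
\vol(\Omega^-(r))\le \mathcal{L}^n(B(r)\cap\{f<0\})+\int_{-\infty}^{0}\mathcal{H}^{n-1}(f^{-1}(h)\cap B(r))\,dh.
\]
The strategy is to show that both terms tend to zero as the ADM mass $m\to 0$ and then choose $\delta$ accordingly.

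For the first term, I would apply the Euclidean isoperimetric inequality. By the normalization $h_0=0$ and \eqref{zero-height}, every regular value $h<0$ satisfies $\mathcal{H}^{n-1}(f^{-1}(h))\le 2\omega_{n-1}(2m)^{(n-1)/(n-2)}$. The set $E_h:=\{f\le h\}\cup U\subset\E^n$ is bounded and of finite perimeter, with reduced boundary contained in $f^{-1}(h)$, so the Euclidean isoperimetric inequality yields $\mathcal{L}^n(E_h)\le C_n\bigl(2\omega_{n-1}(2m)^{(n-1)/(n-2)}\bigr)^{n/(n-1)}$, and hence the same bound on $\mathcal{L}^n(\{f\le h\})$. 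Letting $h\to 0^-$ through regular values and using monotone convergence controls $\mathcal{L}^n(B(r)\cap\{f<0\})$ by the same quantity, which vanishes with $m$.

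For the second term, \eqref{zero-height} bounds the integrand pointwise by $2\omega_{n-1}(2m)^{(n-1)/(n-2)}$ for a.e.~$h<0$; the main obstacle is controlling the range of heights over which the integrand can be nonzero. The vertical coordinate is $1$-Lipschitz on $M$ because $\Psi$ is a Riemannian isometric embedding into $\E^{n+1}$, so the oscillation of $f$ over $\Omega(r)$ is at most $\diam_M(\Omega(r))$. Combining the depth bound $\Depth(\Omega(r_0),\Sigma(r_0))\le D$ with the gradient bound $|Df|\le\gamma$ on the annulus $r_0\le |x|\le r$ gives $\diam_M(\Omega(r))\le 2D+\pi r\sqrt{1+\gamma^2}$, exactly as in the proof of Theorem~\ref{exists-lim}. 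Meanwhile Theorem~\ref{HL-Slab} provides, for mass below a threshold depending only on $n,\gamma,\alpha,r$, a uniform upper bound $f<1$ on $B(r)$. Consequently the integrand vanishes outside an interval of length at most $C:=2D+\pi r\sqrt{1+\gamma^2}+1$, and the second term is at most $2C\omega_{n-1}(2m)^{(n-1)/(n-2)}$, which also vanishes as $m\to 0$.

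Summing the two contributions, I choose $\delta=\delta(\epsilon,n,\gamma,D,r)>0$ small enough that each piece is less than $\epsilon/2$ whenever $m<\delta$, which gives $\vol(\Omega^-(r))<\epsilon$ as desired. The essential subtlety is producing the uniform lower bound on heights in $\Omega(r)\cap\{f<0\}$: this requires combining the depth hypothesis, the exterior gradient bound, and the upper-height bound from Theorem~\ref{HL-Slab}, and it explains why $\delta$ must depend on both $D$ and $r$.
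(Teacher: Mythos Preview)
Your proof is correct and follows essentially the same route as the paper: split via the coarea formula, control the base measure by the isoperimetric inequality applied to \eqref{zero-height}, and bound the height integral by (level-set area)\,$\times$\,(diameter of $\Omega(r)$).

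One small point: your appeal to Theorem~\ref{HL-Slab} is unnecessary and mildly harmful. You already observe that the vertical coordinate is $1$-Lipschitz on $M$, so the set of heights $h$ with $f^{-1}(h)\cap B(r)\neq\emptyset$ lies in an interval of length at most $\diam_M(\Omega(r))\le 2D+\pi r\sqrt{1+\gamma^2}$; intersecting that interval with $(-\infty,0)$ can only shorten it. This alone bounds the range of integration---no upper height bound from Theorem~\ref{HL-Slab} is needed. Dropping that step removes the spurious dependence of your threshold on $\alpha$, so that $\delta=\delta(\epsilon,n,\gamma,D,r)$ exactly as stated in the lemma. The paper's proof proceeds in precisely this simpler way.
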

\begin{proof}
We choose $\delta$ small enough so that \eqref{zero-height} holds.  Arguing as in the proof of Theorem~\ref{exists-lim}, we have
\begin{align}
\vol(\Omega^-(r))
&= \int_{f(x)<0} \sqrt{1+|D f|^2}\, d \mathcal{L}^n\\
&\le \mathcal{H}^n(f^{-1}(-\infty, 0)) +  \int_{-\infty}^{0} \mathcal{H}^{n-1}(f^{-1}(h)\cap B(r))\,dh.
\end{align}
By the isoperimetric inequality and \eqref{zero-height}, the first term is bounded by a constant times $\epsilon^{\frac{n}{n-1}}$. 

To estimate the second term, by \eqref{zero-height}, for almost every negative $h$, 
\be
\mathcal{H}^{n-1}(f^{-1}(h)\cap B(r))\le  \mathcal{H}^{n-1}(f^{-1}(h))<\epsilon.
\ee
 As in the proof of Theorem \ref{exists-lim}, it follows that 
 \be
 \int_{-\infty}^{0} \mathcal{H}^{n-1}(f^{-1}(h)\cap B(r))\,dh<\epsilon\diam(\Omega(r)),
 \ee
 and we know $\diam(\Omega(r))$ is bounded in term of $\gamma$, $D$, and $r$.
\end{proof}

Theorem \ref{HL-Slab} allows us to estimate the rest of the volume of $\Omega(r)$.

\begin{lem}\label{Vol-Bounds+}
For any $\epsilon>0$, there exists $\delta=\delta(\epsilon, n, r, \gamma, \alpha)>0$ such that
 if $M\in\Gr$ has  ADM mass less than $\delta$, then 
\be
\vol(\Omega^+(r))\le \vol(B(r))+\epsilon
\ee
where $\Omega^+(r) := \Psi^{-1}(B(r)\times [0,\infty))$.
\end{lem}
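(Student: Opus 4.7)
The plan is to mimic the volume estimate from the proof of Theorem~\ref{exists-lim}, but now use the slab estimate of Theorem~\ref{HL-Slab} to restrict the range of heights and make the resulting bound small.

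First I would fix $\epsilon>0$ and, given $r$, invoke Theorem~\ref{HL-Slab} with the parameter $\epsilon'=\epsilon/(\omega_{n-1} r^{n-1})$ to obtain a $\delta=\delta(\epsilon,n,\gamma,\alpha,r)$ such that whenever $M\in\Gr$ has ADM mass less than $\delta$ one has $f(x)<\epsilon'$ for all $|x|<r$. Combined with the convention $h_0=0$ and the fact (discussed in Section~\ref{section:AF-graph}) that the graph boundary sits at level $h_U\le h_0=0$, this means that the image $\Psi(\Omega^+(r))$ lies in the slab $B(r)\times[0,\epsilon']$, and in particular the set $B(r)\cap\{f\ge 0\}$ is entirely contained in $B(r)\smallsetminus\overline{U}$ (up to an $\mathcal{H}^n$-null set) where $f$ is smooth.

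Next, by the graph area formula,
\begin{align*}
\vol(\Omega^+(r))
&=\int_{B(r)\cap\{f\ge 0\}} \sqrt{1+|Df|^2}\,d\mathcal{L}^n\\
&\le \mathcal{L}^n(B(r)\cap\{f\ge 0\}) + \int_{B(r)\cap\{f\ge 0\}} |Df|\,d\mathcal{L}^n.
\end{align*}
The first term is bounded by $\vol(B(r))$, so all the work is in estimating the second term. I would apply the coarea formula to rewrite it as
\[
\int_{B(r)\cap\{f\ge 0\}} |Df|\,d\mathcal{L}^n
= \int_0^\infty \mathcal{H}^{n-1}\left(f^{-1}(h)\cap B(r)\right)\,dh,
\]
and note that the integrand vanishes for $h\ge\epsilon'$ thanks to Theorem~\ref{HL-Slab}. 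For almost every $h$ the level set $f^{-1}(h)$ is outward-minimizing, so the elementary comparison argument already carried out in the proof of Theorem~\ref{exists-lim} (comparing $f^{-1}(h)$ with the result of unioning with $B(r)$) gives $\mathcal{H}^{n-1}(f^{-1}(h)\cap B(r))\le \omega_{n-1}r^{n-1}$ for a.e.\ $h$. Therefore
\[
\int_0^\infty \mathcal{H}^{n-1}\left(f^{-1}(h)\cap B(r)\right)\,dh
\le \epsilon'\cdot\omega_{n-1}r^{n-1}=\epsilon,
\]
by the choice of $\epsilon'$, and the desired inequality $\vol(\Omega^+(r))\le\vol(B(r))+\epsilon$ follows after adjusting $\epsilon$ by a harmless constant factor.

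The only mild subtlety, and the step I would be most careful about, is verifying that the height restriction given by Theorem~\ref{HL-Slab} applies to the \emph{entire} set $B(r)\cap\{f\ge 0\}$ (i.e.\ that we really may truncate the $h$-integral at $\epsilon'$). This uses the convention $h_0=0$ together with the fact, recorded in Section~\ref{section:AF-graph}, that $h_U\le h_0$ so the inner boundary does not intrude into the positive-height region; once that is in place, everything else is just coarea plus the outward-minimizing comparison already exploited earlier in the paper.
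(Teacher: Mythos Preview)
Your proposal is correct and follows essentially the same route as the paper: apply Theorem~\ref{HL-Slab} to confine the positive-height part of the graph to a thin slab, then repeat the coarea/outward-minimizing estimate from the proof of Theorem~\ref{exists-lim} to bound $\vol(\Omega^+(r))\le \vol(B(r))+\epsilon'\vol(\partial B(r))$. The only cosmetic difference is that you pre-scale $\epsilon'=\epsilon/(\omega_{n-1}r^{n-1})$ to land exactly on $\epsilon$, whereas the paper is content with $\epsilon\cdot\vol(\partial B(r))$ and an implicit rescaling.
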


\begin{proof}
We choose $\delta$ small enough so  that Theorem \ref{HL-Slab}  holds.
As in the proof of Theorem \ref{exists-lim} we have
\be
\vol(\Omega^+(r))
\le \vol(B(r))+ \int_{0}^{\epsilon} \mathcal{H}^{n-1}(f^{-1}(h)\cap B(r))\,dh,
\ee
where the upper limit $\epsilon$ follows from Theorem \ref{HL-Slab}.
As in the proof of Theorem \ref{exists-lim}, for almost every $h$ we have
 $\mathcal{H}^{n-1}(f^{-1}(h)\cap B(r))\le  \vol(\partial B(r))$. Thus
\be
\vol(\Omega^+(r)) \le \vol(B(r))+\epsilon \vol(\partial B(r)).
\ee
\end{proof}

%


\begin{cor}\label{Vol-Bounds}
If $M_j\in \Gr$ is a sequence with masses approaching zero, then $\limsup_{j\to\infty} \vol(\Omega_j(r))\le \vol(B(r))$.
\end{cor}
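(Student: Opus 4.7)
The plan is to deduce the corollary directly from the two volume estimates Lemma~\ref{Vol-Bounds-} and Lemma~\ref{Vol-Bounds+} by splitting $\Omega_j(r)$ into its portion above and below the hyperplane at height zero. There should be no genuine obstacle here; the only subtlety is reconciling the vertical normalization hypothesis ($h_0 = 0$) used in those lemmas with the fact that Corollary~\ref{Vol-Bounds} makes no such normalization.

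First I would observe that the quantity $\vol(\Omega_j(r))$ is invariant under a vertical translation of the graph, since such a translation is a Riemannian isometry of $\Psi_j(M_j)$. Therefore, for each $j$, I may freely re-choose the embedding $\Psi_j$ (and correspondingly $f_j$) so that $h_{0,j} = 0$ as prescribed in Definition~\ref{definition:h_0} and Section~\ref{section:AF-graph}; this does not affect the volume we are estimating. After this normalization, the lemmas of the previous subsection apply verbatim with the same constants $n, r_0, \gamma, D, \alpha, r$.

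Next, given $\epsilon > 0$, I would write
\be
\vol(\Omega_j(r)) = \vol(\Omega_j^-(r)) + \vol(\Omega_j^+(r)),
\ee
where $\Omega_j^\pm(r) = \Psi_j^{-1}(B(r) \times I^\pm)$ with $I^- = (-\infty,0)$ and $I^+ = [0,\infty)$, as in the two preceding lemmas (the preimage of the height-zero slice has measure zero and so does not affect the sum). Applying Lemma~\ref{Vol-Bounds-} with parameter $\epsilon/2$ yields some $\delta_1 = \delta_1(\epsilon, n, \gamma, D, r) > 0$ such that $\vol(\Omega_j^-(r)) < \epsilon/2$ whenever $m_{\mathrm{ADM}}(M_j) < \delta_1$. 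Applying Lemma~\ref{Vol-Bounds+} with parameter $\epsilon/2$ yields some $\delta_2 = \delta_2(\epsilon, n, r, \gamma, \alpha) > 0$ such that $\vol(\Omega_j^+(r)) \le \vol(B(r)) + \epsilon/2$ whenever $m_{\mathrm{ADM}}(M_j) < \delta_2$.

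Finally, since $m_{\mathrm{ADM}}(M_j) \to 0$ by hypothesis, there exists $J$ such that $m_{\mathrm{ADM}}(M_j) < \min(\delta_1, \delta_2)$ for all $j \ge J$. For such $j$ we have $\vol(\Omega_j(r)) \le \vol(B(r)) + \epsilon$, so $\limsup_{j \to \infty} \vol(\Omega_j(r)) \le \vol(B(r)) + \epsilon$. Letting $\epsilon \to 0$ yields the claim.
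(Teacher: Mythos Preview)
Your proof is correct and follows exactly the approach implicit in the paper: the corollary is stated without proof because it is an immediate consequence of combining Lemmas~\ref{Vol-Bounds-} and~\ref{Vol-Bounds+}. Your treatment of the vertical normalization is appropriate and your $\epsilon/2$ splitting is the natural way to make the implicit argument explicit.
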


\subsection{The Image of $\Psi_\infty$ Lies in a Disk}

Our goal is to show that $\Psi_\infty$ is an isometry from $\Omega_\infty(r)$ to $B(r)\times\left\{0\right\}$. The next lemma shows that the image falls in the correct place. However, technically we will only use the fact that boundary falls in the right place.

\begin{lem}\label{thm-in-disk}
Let $M_j$ be as in the statement of Theorem \ref{exists-lim}, and assume that the ADM mass of $M_j$ converges to zero. 
The 1-Lipschitz map
\be
\Psi_\infty: \Omega_\infty(r) \too \overline{B(r)}\times \R \subset \E^{n+1}
\ee
constructed in Theorem \ref{exists-lim} has image lying in the disk $\overline{B}(r)\times\left\{0\right\}$. In particular, the induced map on the boundary $\Sigma_\infty(r):=\partial\Omega_\infty(r)$ has image lying in $\partial B(r)\times\left\{0\right\}$.

\end{lem}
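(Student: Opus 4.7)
The plan is to show that the limit map $\Psi_\infty$ sends every point of $X_\infty$ into $\overline{B(r)}\times\{0\}$ by combining the slab estimate of Theorem~\ref{HL-Slab} with the small-volume estimate of Lemma~\ref{Vol-Bounds-}. First I would normalize: since the conclusion is invariant under vertical translation in $\E^{n+1}$, we may vertically shift each $\Psi_j$ so that $h_0(M_j)=0$. Under this normalization, Theorem~\ref{HL-Slab} supplies $\varepsilon_j\to 0$ with $f_j(x)<\varepsilon_j$ for all $|x|<r$, and Lemma~\ref{Vol-Bounds-} gives $\vol(\Omega_j^-(r))\to 0$.

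Next, fix $p_\infty\in X_\infty=\set(T_\infty)$. Using the metric isometric embeddings $\varphi_j\colon X_j\to Z$ into a common complete separable space $Z$ supplied by Theorem~\ref{converge}, the point $\varphi_\infty(p_\infty)$ has positive upper $\|T_\infty\|$-density, so every ball $B_Z(\varphi_\infty(p_\infty),\rho)$ carries positive $\|T_\infty\|$-mass. Lower semicontinuity of mass (Theorem~\ref{converge}), together with $\|\varphi_{j\#}T_j\|(\varphi_j(\Omega_j^-(r)))=\vol(\Omega_j^-(r))\to 0$, then guarantees that for each $\rho>0$ and all $j$ sufficiently large there exists $p_j^\rho\in\Omega_j^+(r)$ with $\varphi_j(p_j^\rho)\in B_Z(\varphi_\infty(p_\infty),2\rho)$. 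A diagonal extraction, letting $\rho_j\to 0$ slowly, produces $p_j\in\Omega_j^+(r)$ with $\varphi_j(p_j)\to\varphi_\infty(p_\infty)$ in $Z$. Theorem~\ref{Flat-Arz-Asc} then yields $\Psi_\infty(p_\infty)=\lim_{j\to\infty}\Psi_j(p_j)$, and since $p_j\in\Omega_j^+(r)$ we have $\Psi_j(p_j)=(x_j,f_j(x_j))$ with $0\le f_j(x_j)<\varepsilon_j\to 0$. This places $\Psi_\infty(p_\infty)$ in $\overline{B(r)}\times\{0\}$.

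For the boundary assertion, Lemma~\ref{inner-gone} provides $\Sigma_j(r)\Fto\Sigma_\infty(r)$, and the Penrose-type bound $\vol(\partial M_j)\le\omega_{n-1}(2m_j)^{(n-1)/(n-2)}\to 0$ used there plays the role of $\vol(\Omega_j^-(r))$ in the same diagonal construction. Applied to any $p_\infty\in\Sigma_\infty(r)$ with positive $\|\partial T_\infty\|$-density, this yields $p_j\in\Sigma_j(r)\cap\Omega_j^+(r)$ whose images satisfy $|x_j|=r$ and $0\le f_j(x_j)<\varepsilon_j$, forcing $\Psi_\infty(p_\infty)\in\partial B(r)\times\{0\}$.

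The main technical obstacle is arranging the approximating sequence $p_j$ to simultaneously land in $\Omega_j^+(r)$ and converge to a prescribed $p_\infty\in X_\infty$ in $Z$: although mass is lower semicontinuous along the flat convergence and $\vol(\Omega_j^-(r))\to 0$, the $\|T_\infty\|$-mass of small balls around $\varphi_\infty(p_\infty)$ can decay faster than $\vol(\Omega_j^-(r))$, so the diagonal extraction must be tuned so that at each scale $\rho$ the available $\|\varphi_{j\#}T_j\|$-mass on $\varphi_j(\Omega_j^+(r))\cap B_Z(\varphi_\infty(p_\infty),2\rho)$ remains strictly positive for all large $j$.
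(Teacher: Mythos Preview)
Your approach is correct for the interior statement but takes a genuinely different route from the paper. The paper argues by contradiction: it picks \emph{any} sequence $p_j\in\Omega_j(r)$ converging to $p_\infty$, uses Theorem~\ref{HL-Slab} to place $\Psi_\infty(p_\infty)$ in $\overline{B(r)}\times(-\infty,0]$, and then observes that if the height were strictly negative, the intrinsic balls $B(p_j,\rho)$ would eventually lie in $\Omega_j^-(r)$, forcing $\vol(B(p_j,\rho))\to 0$ by Lemma~\ref{Vol-Bounds-}, which contradicts $p_\infty\in\set(T_\infty)$ via Lemma~\ref{ball-converge}. Your argument instead \emph{constructs} an approximating sequence already lying in $\Omega_j^+(r)$, using local lower semicontinuity of mass on open balls in $Z$ together with $\vol(\Omega_j^-(r))\to 0$. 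Both work; the paper's route avoids the diagonal extraction and the subtlety you flag in your last paragraph, at the cost of invoking the ball-convergence lemma, while yours is more self-contained once one grants that $\|T\|(U)\le\liminf\|T_j\|(U)$ on open sets.

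Your boundary argument, however, has a gap. You assert that the diagonal construction yields $p_j\in\Sigma_j(r)\cap\Omega_j^+(r)$, but the Penrose bound only excludes $\partial M_j$; to land in $\Omega_j^+(r)$ you would also need $\mathcal{H}^{n-1}(\Sigma_j(r)\cap\Omega_j^-(r))\to 0$, which is not established and does not obviously follow from the level-set estimate \eqref{zero-height}. The paper sidesteps this entirely: once the interior result is proved, $\Psi_\infty$ extends continuously to $\overline{\Omega_\infty(r)}\supset\Sigma_\infty(r)$ with image still in $\overline{B(r)}\times\{0\}$, and then one only needs $p_j\in\Sigma_j(r)$ (so that $|x_j|=r$) to force the image into $\partial B(r)\times\{0\}$. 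You can repair your argument the same way: drop the requirement $p_j\in\Omega_j^+(r)$ on the boundary and instead inherit the height-zero conclusion from the interior part by continuity.
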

\begin{proof}
Recall that for any $p\in  \Omega_\infty(r)$, $\Psi_\infty(p)$ is the limit of $\Psi_j(p_j)$ for some sequence $p_j\in \Omega_j(r)$ converging to $p$. By Theorem \ref{HL-Slab}, we know that for $\epsilon>0$, the point $\Psi_j(p_j)\in\Psi_j(\Omega_j(r))$ lies below the height $\epsilon$ for large $j$. Thus $\Psi_\infty(p)$ lies in $\overline{{B}(r)}\times(-\infty,0]$.
Now suppose $\Psi_\infty(p)$ lies strictly below the height $0$. Then for any $\rho>0$ sufficiently small, the intrinsic ball $B(p_j, \rho)\subset M_j$ lies entirely inside $\Omega_j^-(r)$. By Lemma \ref{Vol-Bounds-}, we must have $\lim_{j\to\infty} \vol(B(p_j, \rho))=0$. But this contradicts the fact that $p_j\to p \in \Omega_\infty(r)$, by Lemma \ref{ball-converge}, for example.

For the second part of the lemma, consider $p\in  \partial\Omega_\infty(r)$. By Lemma \ref{inner-gone}, $\Psi_\infty(p)$ is the limit of $\Psi_j(p_j)$ for some sequence $p_j\in \Sigma_j(r)$ converging to $p$. Since each $\Psi_j(p_j)$ lies in  $\partial B(r)\times\rr$, the result now follows from the first part of the lemma.
 \end{proof}


\section{Bi-Lipschitz Map between $\Sigma(r)$ and $\partial B(r)$}
\label{sect-biLip}

In this section we prove that the outer boundaries $\Sigma_j(r)$
behave far better than $\Omega_j(r)$.   We already
know 
\be
\Omega_j(r) \Fto \Omega_\infty(r)=(X_\infty, d_\infty, T_\infty)
\ee
and by Lemma~\ref{inner-gone} we know
\be
\Sigma_j(r) \Fto \Sigma_\infty(r):=\partial \Omega_\infty(r)=(\set(\partial T_\infty), d_\infty, \partial T_\infty).
\ee

Now we prove far more:

\begin{lem} \label{biLip}
Assume the hypotheses of Lemma \ref{thm-in-disk}.
Then we have Gromov-Hausdorff convergence to the limit
\be\label{biLip-GH}
(\Sigma_j(r), d_j)
\GHto 
(\Sigma_\infty(r), d_\infty)
\ee
and the  map
\be
\Psi_\infty:\Sigma_\infty(r) \too \partial B(r)\times\left\{0\right\}
\ee
described in Lemma \ref{thm-in-disk} is a bi-Lipschitz map.  In particular, 
it follows that
  \be
\Psi_{\infty\#} (\partial T_\infty)= \lbrack \partial B(r)\times \left\{0\right\}\rbrack,
\ee
where  
$\lbrack \partial B(r)\times\left\{0\right\} \rbrack$ denotes the integral $(n-1)$-current in $\E^{n+1}$ corresponding to the $(n-1)$
dimensional submanifold $\partial B(r)\times \left\{0\right\}$.
 \end{lem}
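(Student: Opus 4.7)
The plan is to construct uniformly bi-Lipschitz parametrizations of the outer boundaries $\Sigma_j(r)$ by the fixed reference space $\partial B(r)$, and then apply \thmref{app-thm} to simultaneously upgrade intrinsic flat convergence to Gromov-Hausdorff convergence and extract a bi-Lipschitz limit of these parametrizations.  The essential geometric input is the gradient bound $|Df_j|\le\gamma$ from condition~\eqref{cond3}, which, since $r\ge r_0$, controls the intrinsic geometry of $\Sigma_j(r)$ uniformly in $j$.

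First I would define $\Phi_j\colon\partial B(r)\too\Sigma_j(r)$ by $\Phi_j(x):=\Psi_j^{-1}(x,f_j(x))$, which is well defined because $U_j\subset B(r_0/2)$ and $r\ge r_0$, so $\partial B(r)\cap\overline{U_j}=\emptyset$.  I claim each $\Phi_j$ is bi-Lipschitz with constants independent of $j$.  For the upper Lipschitz constant, given $x,y\in\partial B(r)$, take a length-minimizing arc $\sigma$ in the round sphere $\partial B(r)$, whose length is at most $\tfrac{\pi}{2}|x-y|$, and lift it to the curve $c:=\Phi_j\circ\sigma$ in $M_j$.  Since $\Psi_j$ is a Riemannian isometric embedding, the $(M_j,d_j)$-length of $c$ equals the Euclidean length of $(\sigma(t),f_j(\sigma(t)))$, which is at most $\sqrt{1+\gamma^2}$ times the length of $\sigma$.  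Hence
\[
d_j\bigl(\Phi_j(x),\Phi_j(y)\bigr)\,\le\,\tfrac{\pi}{2}\sqrt{1+\gamma^2}\,|x-y|.
\]
The matching lower bound is free: since $\Psi_j$ is $1$-Lipschitz,
\[
d_j\bigl(\Phi_j(x),\Phi_j(y)\bigr)\,\ge\,\bigl|\Psi_j(\Phi_j(x))-\Psi_j(\Phi_j(y))\bigr|\,\ge\,|x-y|.
\]

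Next I would apply \thmref{app-thm} to the intrinsic flat convergent sequence $\Sigma_j(r)\Fto\Sigma_\infty(r)$ from \lemref{inner-gone}, equipped with these uniformly bi-Lipschitz parametrizations $\Phi_j$.  Its conclusion is two-fold: the convergence is automatically Gromov-Hausdorff, establishing \eqref{biLip-GH}, and after passing to a subsequence, $\Phi_j$ converges to a bi-Lipschitz map $\Phi_\infty\colon\partial B(r)\too\Sigma_\infty(r)$ whose bi-Lipschitz constants are controlled by those of the $\Phi_j$.  To identify $\Phi_\infty^{-1}$ with $\Psi_\infty|_{\Sigma_\infty(r)}$ I would apply \thmref{Flat-Arz-Asc} to $\Psi_j$ along the sequence $p_j:=\Phi_j(x)\to\Phi_\infty(x)$: this gives $\Psi_j(p_j)=(x,f_j(x))\to\Psi_\infty(\Phi_\infty(x))$ in $\E^{n+1}$.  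By \lemref{thm-in-disk} the limit lies in $\partial B(r)\times\{0\}$, and since the $\partial B(r)$-component of $\Psi_j(p_j)$ is identically $x$, it must equal $(x,0)$.  Hence $\Psi_\infty\circ\Phi_\infty$ is the inclusion $\partial B(r)\hookrightarrow\partial B(r)\times\{0\}$, and because $\Phi_\infty$ is a bi-Lipschitz bijection onto $\Sigma_\infty(r)$, $\Psi_\infty|_{\Sigma_\infty(r)}$ is forced to be its bi-Lipschitz inverse.

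For the current identity I would pass to flat limits in $\E^{n+1}$.  Each $\Psi_j$ is an orientation-preserving bi-Lipschitz map of $\Sigma_j(r)$ onto the graph $\{(x,f_j(x)):x\in\partial B(r)\}$, so $\Psi_{j\#}(\partial T_j)\rstr(\partial B(r)\times\R)$ is exactly this graph current.  The uniform bound $|f_j|\le\epsilon$ on $\partial B(r)$ (from \thmref{HL-Slab} on the upper side and from \lemref{Vol-Bounds-} combined with the $\gamma$-Lipschitz control on the lower side) together with the linear homotopy $(x,tf_j(x))$ shows that these graph currents flat converge to $[\partial B(r)\times\{0\}]$ in $\E^{n+1}$.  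On the other hand, \thmref{Flat-Arz-Asc}, \lemref{inner-gone}, and Lam's Penrose inequality jointly give $\Psi_{j\#}\partial T_j\Fto\Psi_{\infty\#}\partial T_\infty$ in $\E^{n+1}$.  Uniqueness of flat limits then yields $\Psi_{\infty\#}\partial T_\infty=[\partial B(r)\times\{0\}]$.  The principal obstacle throughout is verifying the hypotheses of \thmref{app-thm}, whose technical content is precisely what makes the leap from intrinsic flat convergence to bi-Lipschitz structural information possible.
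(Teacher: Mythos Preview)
Your overall strategy matches the paper's: build uniformly bi-Lipschitz parametrizations $\Phi_j:\partial B(r)\to\Sigma_j(r)$, feed the situation into \thmref{app-thm}, and then identify $\Psi_\infty$ with the inverse of the limit parametrization.  The Lipschitz bound via great-circle arcs is a perfectly good variant of the paper's chord-lifting argument.  However, two steps are stated in a way that does not line up with what the cited results actually say.

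First, \thmref{app-thm} does not take as input ``an intrinsic flat convergent sequence equipped with parametrizations''.  Its hypothesis is a \emph{single} space $(X,d_0,T)$ carrying a family of metrics $d_j$ uniformly comparable to $d_0$.  So you must explicitly pull back: set $X=\partial B(r)$, $d_0=d_{\E^{n+1}}$, $T=[\partial B(r)]$, and $d_j':=\Phi_j^*d_j$, then verify $1\le d_j'/d_0\le\Gamma$ (which is exactly your bi-Lipschitz estimate).  The conclusion is that $(\partial B(r),d_j',[\partial B(r)])$ converges in both GH and intrinsic flat senses to $(\partial B(r),d_\infty',[\partial B(r)])$.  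One then invokes \emph{uniqueness of intrinsic flat limits} to obtain a current-preserving isometry between this limit and the already-known limit $\Sigma_\infty(r)$ from \lemref{inner-gone}.  Moreover, \thmref{app-thm} does \emph{not} assert that the maps $\Phi_j$ converge to some $\Phi_\infty$; the paper extracts $\Phi_\infty$ separately via the Gromov--Hausdorff Arzela--Ascoli theorem (\thmref{Gromov-Arz-Asc}), using the GH convergence just established.  Your sentence ``after passing to a subsequence, $\Phi_j$ converges to a bi-Lipschitz map $\Phi_\infty$'' needs this extra step.

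Second, for the current identity your route through flat limits in $\E^{n+1}$ is more elaborate than necessary and has a gap: \thmref{Flat-Arz-Asc} gives pointwise convergence $\Psi_j(p_j)\to\Psi_\infty(p)$, but it does not by itself give $\Psi_{j\#}\partial T_j\to\Psi_{\infty\#}\partial T_\infty$ as currents.  The paper instead gets $\Psi_{\infty\#}\partial T_\infty=[\partial B(r)\times\{0\}]$ essentially for free from the current-preserving isometry produced above together with the identification $\Psi_\infty=\Phi_\infty^{-1}$: since $\Phi_\infty$ (or rather the isometry it realizes) pushes $[\partial B(r)\times\{0\}]$ to $\partial T_\infty$, applying $\Psi_{\infty\#}$ returns it.  Your homotopy/Penrose argument is salvageable but superfluous once the structural identification is in place.
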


 \begin{rmrk}\label{no-disappearing}
 By the Gromov-Hausdorff convergence we know that for
any sequence $p_j \in \Sigma_j(r)$, there
is a subsequence which converges to 
$p_\infty \in \Sigma_\infty(r)$.  In other words,
there are no disappearing sequences on the boundary.
There can be disappearing sequences inside $\Omega_j(r)$.
This can be seen in rotationally symmetric examples by choosing
 points in increasingly thin wells of uniform depth as in the
 work of the last two named authors \cite{LeeSormani1}.
 \end{rmrk}
 
 \begin{rmrk}
 This lemma strongly uses (\ref{cond3}) in the hypotheses
 on $\Omega(r)$.   Without this condition it is possible for 
 there to be sequences of $p_j \in \Sigma_j(r)$
 which disappear in the limit.     One may
 center the rotational symmetry of the previously
 mentioned example about a point in $\partial B(r)$
 if we do not require (\ref{cond3}).
 \end{rmrk}

\begin{proof}
Let $\pi$ be the obvious projection map from $\E^{n+1}$ to $\E^{n}\times\left\{0\right\}$, and
define the map
\be\label{phi-j-new}
\Phi_j: \partial B(r)\times\left\{0\right\} \too \Sigma_j(r)
\ee 
to be the inverse of the bijective map  
\be 
\pi\circ\Psi_j:\Sigma_j(r)\too \partial B(r)\times\left\{0\right\}.
\ee


We claim that the $\Phi_j$ have a uniformly bounded Lipschitz constant $\Gamma$.
For any $x_1, x_2\in \partial B(r)\times\left\{0\right\}$ with Euclidean distance $|x_1-x_2|\le \sqrt{2}r$, we can ``lift'' the chord joining $x_1$ to $x_2$ to a curve $c:[0,1]\too\Omega_j(r)$ joining $\Phi_j(x_1)$ to $\Phi_j(x_2)$ 
such that $\pi(\Psi_j(c(t)))= x_1(1-t)+x_2 t$. Note that this is possible because the chord joining $x_1$ to $x_2$ stays outside $B(r_0/2)\times\left\{0\right\}$ and $U\subset B(r_0/2)$ by Definition \ref{def:hypotheses}. 

For the same reason we can apply the gradient bound  in Definition \ref{def:hypotheses} to conclude that
\be
|c'(t)|\le |x_1-x_2|\sqrt{1+\gamma^2},
\ee
and consequently, 
\be
d_j(\Phi_j(x_1),\Phi_j(x_2))
\le |x_1-x_2|\sqrt{1+ \gamma^2}.
\ee
Now consider any pair $x_1, x_2\in \partial B(r)\times\left\{0\right\}$. There is a midpoint $x_3 \in \partial B(r)$ such that 
\be
|x_1-x_3|=|x_3-x_2|\le \sqrt{2}r.
\ee
 Then
 \begin{align}
\frac{ d_j(\Phi_j(x_1),\Phi_j(x_2))}
{|x_1-x_2|}
&\le  \frac{d_j(\Phi_j(x_1),\Phi_j(x_3))+d_j(\Phi_j(x_3),\Phi_j(x_2))
}{|x_1-x_3|}\\
&\le  \frac{   (|x_1-x_3|+  |x_3-x_2|)\sqrt{1+ \gamma^2}} {|x_1-x_3|} = 2\sqrt{1+\gamma^2}.
\end{align} 
Thus we have proven our claim:
\be\label{Lip-bound}
\Lip(\Phi_j) \le \Gamma,
\ee
where $\Gamma =2 \sqrt{1+ \gamma^2}$.

Next we will apply this uniform Lipschitz bound to prove 
Gromov-Hausdorff convergence by applying
Theorem~\ref{app-thm} from the appendix.   To apply this
theorem we need to view all our spaces as lying on a single
domain.

We consider the pullback metric $d_j'=\Phi_j^*d_j$ on $\partial B(r)\times \left\{0\right\}$, where $d_j$ is the metric on $\Omega_j(r)$.  So
we have 
\be\label{new-view}
(\partial \Omega_j(r), d_j, \lbrack \partial \Omega_j(r)\rbrack)
\isom(\partial B(r)\times \left\{0\right\}, d_j', \lbrack \partial B(r)\times \left\{0\right\}\rbrack)
\ee
via $\Phi_j$ as a current preserving isometry because we
are simply pulling back the metric.

We can now apply Theorem~\ref{app-thm} in the Appendix
because (\ref{Lip-bound}) implies that
\be
 1 \le \frac{ d_j'(x,y)}{d_{\E^{n+1}}(x,y)} \le  \Gamma
 \ee
for all $x, y\in\partial B(r)\times\left\{0\right\}$.  Thus 
there is a subsequence which we also denote $d'_j$ and
there exists a metric $d'_\infty=\lim_{j\to\infty}d'_j$ on $\partial B(r)\times\left\{0\right\}$, with the property that
\be \label{lim-Lip-1}
 1 \le \frac{ d'_\infty (x,y)}{d_{\E^{n+1}}(x,y)} \le  \Gamma,
 \ee
and such that our integral current spaces in (\ref{new-view}) converge (subsequentially) in
both the intrinsic flat and Gromov-Hausdorff
sense to
\be\label{new-lim}
(\partial B(r)\times \left\{0\right\}, d_\infty', \lbrack \partial B(r)\times \left\{0\right\}\rbrack).
\ee
However we know $\Sigma_j(r) \Fto \Sigma_\infty(r)$.
Thus there is a current preserving isometry so that
\be\label{new-lim-2}
( \Sigma_\infty(r), d_\infty, \partial T_\infty)
\isom(\partial B(r)\times \left\{0\right\}, d_\infty', \lbrack \partial B(r)\times \left\{0\right\}\rbrack).
\ee
So we have $\Sigma_j(r)\GHto \Sigma_\infty(r)$
for the subsequence.

Next we prove $\Psi_\infty$ is bi-Lipschitz.  Since it is defined
to be the limit of Lipschitz $1$ maps, we already know
$\Lip(\Psi_\infty)\le 1$.   We must construct the inverse map
and prove it is Lipschitz.

Since $\Phi_j: \partial B(r)\times \left\{0\right\} \too \Sigma_j(r)$
satisfy \eqref{Lip-bound}, we can apply the Gromov-Hausdorff
Arzela-Ascoli Theorem of Grove-Petersen \cite{Grove-Petersen}
to see that a further subsequence converges to 
\be
\Phi_\infty: \partial B(r)\times\left\{0\right\} \too \Sigma_\infty(r)
\ee
which also satisfies \eqref{Lip-bound}.   We need only
show $\Phi_\infty$ is the inverse of $\Psi_\infty$.

Since
\be
\Phi_j\circ\pi\circ \Psi_j=id: \Sigma_j(r) \to \Sigma_j(r)
\ee
and
\be
\pi \circ \Psi_j \circ \Phi_j= id: \partial B(r)\times\left\{0\right\}\to \partial B(r)\times\left\{0\right\}
\ee
we have
\be
\Phi_\infty\circ\pi\circ \Psi_\infty=id: \Sigma_\infty(r) \to \Sigma_\infty(r)
\ee
and
\be
\pi \circ \Psi_\infty \circ \Phi_\infty= id: \partial B(r)\times\left\{0\right\}\to \partial B(r)\times\left\{0\right\}.
\ee
Thus
\be
\pi \circ \Psi_\infty: \Sigma_\infty(r) \to \partial B(r)\times\left\{0\right\}
\ee
is the inverse of $\Phi_\infty$.   By Lemma \ref{thm-in-disk},
$\pi \circ \Psi_\infty=\Psi_\infty$.
\end{proof}


\section{Proof of Theorem~\ref{thm-main}}\label{sect-main}

We now complete the proof of Theorem~\ref{thm-main}:

\begin{proof}
 By Lemma \ref{biLip}, we know that 
  \be
\partial \lbrack B(r)\times \left\{0\right\}\rbrack=\Psi_{\infty\#} (\partial T_\infty)=\partial(\Psi_{\infty\#}T_\infty)
\ee
as an equality between $(n-1)$ currents in $\E^{n+1}$.

In the following computation, we use the minimizing property of the disk (among integral currents with the same boundary) in the first step, the fact that $\Lip(\Psi_\infty)\le 1$ in the second step, lower semicontinuity of mass (cf. Theorem \ref{converge}) in the third step, and Corollary \ref{Vol-Bounds} in the final step.

\begin{align}
\vol(B(r))&\le \mass(\Psi_{\infty\#}  T_\infty) \\
& \le \mass(T_\infty) \label{t35}\\
 &\le \liminf_{j\to\infty} \mass(T_j)\\
& =\liminf_{j\to\infty}\vol(\Omega_j(r)) \\
&= \vol(B(r)). 
\end{align}
 Equality in the first step implies that $\Psi_{\infty\#}T_\infty=\lbrack B(r)\times \left\{0\right\}\rbrack$, and then equality in the second inequality \eqref{t35} for a Lipschitz $1$ function implies that $\Psi_\infty:\Omega_\infty(r)\too  B(r)\times\left\{0\right\}$ must be an isometry.   In summary, we have shown that any sequence in $\Gr$ with the ADM mass converging to zero has a subsequence that converges to $B(r)\times\left\{0\right\}$ in the intrinsic flat distance.   The volume convergence follows from
Corollary~\ref{Vol-Bounds}.

To obtain the epsilon-delta formulation of Theorem~\ref{thm-main} from this is standard.
\end{proof}


\section{Proof of Pointed Convergence}\label{sect-pted}

We now turn to the proof of Theorem \ref{thm-pted}. Note that it is important that the points $p_j$ are not chosen arbitrarily.
It is easy to see that if $p_j$ is a sequence of disappearing points, the result will not hold, as can be seen in the example described in Remark \ref{thin-wells}.

\begin{proof}[Proof of Theorem \ref{thm-pted}]
Assume the hypotheses of Theorem \ref{thm-pted}. 
Fix any $R'> 0$.  It suffices to prove that for almost
every $R\in (0, R')$ we have (\ref{eq-pted}).

We claim that if $r=R' + r_0$ then $B_{p_j}(R)\subset \Omega_j(r)\subset M_j$.      
To see this, for each $q\in B_{p_j}(R)$, we have
\be
d_{\E^n}(\pi(\Psi_j(q)), \partial B(r_0)) \le 
 d_{M_j}(q, \Sigma_j(r_0)) < R<R'.
 \ee
Thus  $\pi(\Psi_j(q)) \subset B(r_0+R')\smallsetminus U$ and   
so $q\in \Omega(r_0+R')\subset M_j$.


By Theorem~\ref{thm-main} we know that 
\be
\lim_{j\to\infty}d_{\mathcal{F}}\left(\Omega_j(r) \subset M_j, B(r)\subset \E^n\right) =0
\ee
and by Lemma~\ref{biLip} we know that
\be
\lim_{j\to\infty}d_{GH}\left(\Sigma_j(r_0)\subset M_j, \partial B(r_0)\subset \E^n\right) =0.
\ee
So for any sequence of points $p_j \in \Sigma_j(r_0)$
there is a subsequence 
\be
p_{j_i}\to p_\infty\in \partial B(r_0)\subset B(r).
\ee
by the Gromov-Hausdorff Bolzano-Weierstrass Theorem
(cf. Theorem~\ref{Gromov-B-W}).
Then by the intrinsic flat ball convergence lemma (cf. Lemma \ref{ball-converge}),
$B_{p_{j_i}}(R) \Fto B_{p_\infty}(R)$.   Since $B_{p_\infty}(R)$
is isometric to a Euclidean ball of radius $R$ regardless of the
value of $p_\infty$,  we have (\ref{eq-pted}) and we are done
with the proof of pointed intrinsic flat convergence.

The volume convergence follows from the volume convergence
in Theorem~\ref{thm-main} and semicontinuity of volume as follows:
\begin{eqnarray*}
\liminf_{j\to\infty} \vol( B_{p_{j}}(R)) &=& \liminf_{j\to\infty} \mass( \lbrack B_{p_{j}}(R)\rbrack) \\
&\ge& \mass( \lbrack B_{p_{\infty}}(R)\rbrack) \\
&\ge&\vol(B(R)) \\
\limsup_{j\to\infty} \vol( B_{p_{j}}(R)) &=& \limsup_{j\to\infty} \mass(\lbrack B_{p_j}(R)\rbrack)\\
&\le& \limsup_{j\to\infty} \mass(\lbrack\Omega_j(r)\rbrack) -\liminf_{j\to\infty}\mass(\lbrack\Omega_j(r)\setminus  B_{p_j}(R)\rbrack)\\
&=& \limsup_{j\to\infty} \vol(\lbrack\Omega_j(r)\rbrack) -\liminf_{j\to\infty}\mass(\lbrack\Omega_j(r)\setminus  B_{p_j}(R)\rbrack)\\
&\le& \vol(B(r)) - \mass(\lbrack B(r)\setminus  B_{p_\infty}(R)\rbrack)\\
&=& \vol(B(r))-\vol(B(r))+\vol(B(R))\\
&=&\vol(B(R)).
\end{eqnarray*}
\end{proof}

\section{Appendix}\label{sect-app}

The following theorem concerning intrinsic flat limits of
integral current spaces with varying metrics may be applicable in
other settings as well.  The Gromov-Hausdorff part of this theorem was already
proven by Gromov in \cite{Gromov-metric} but with a completely
different proof in which the common metric space $Z$ is the
disjoint union.   The fact that one also obtains an intrinsic flat
limit which agrees with the Gromov-Hausdorff limit is new.

\begin{thm}\label{app-thm}
Fix a precompact $n$-dimensional integral current space $(X, d_0, T)$
without boundary (e.g. $\partial T=0$) and fix
$\lambda>0$.   Suppose that
$d_j$ are metrics on $X$ such that
\be\label{d_j}
\lambda \ge \frac{d_j(p,q)}{d_0(p,q)} \ge \frac{1}{\lambda}.
\ee
Then there exists a subsequence, also denoted $d_j$,
and a length metric $d_\infty$ satisfying (\ref{d_j}) such that
$d_j$ converges uniformly to $d_\infty$
\be\label{epsj}
\epsilon_j= \sup\left\{|d_j(p,q)-d_\infty(p,q)|:\,\, p,q\in X\right\} \to 0.
\ee 
Furthermore
\be\label{GHjlim}
\lim_{j\to \infty} d_{GH}\left((X, d_j), (X, d_\infty)\right) =0
\ee
and
\be\label{Fjlim}
\lim_{j\to \infty} d_{\mathcal{F}}\left((X, d_j,T), (X, d_\infty,T)\right) =0.
\ee
In particular, $(X, d_\infty, T)$ is an integral current space
and $\set(T)=X$ so there are no disappearing sequences of
points $x_j\in (X, d_j)$.

In fact we have
\be\label{GHj}
d_{GH}\left((X, d_j), (X, d_\infty)\right) \le 2\epsilon_j
\ee
and 
\be\label{Fj}
d_{\mathcal{F}}\left((X, d_j, T), (X, d_\infty, T)\right) \le
2^{(n+1)/2} \lambda^{n+1} 2\epsilon_j \mass_{(X,d_0)}(T).
\ee
\end{thm}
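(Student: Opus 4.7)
The overall plan is to produce the limit metric $d_\infty$ via Arzel\`a--Ascoli, exhibit a common metric space into which both $(X, d_j)$ and $(X, d_\infty)$ admit metric isometric embeddings at small Hausdorff distance, and then construct an explicit $(n+1)$-dimensional slab filling whose Ambrosio--Kirchheim mass controls the intrinsic flat distance.

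First, viewed as functions on the compact set $\overline X \times \overline X$, the metrics $d_j$ are uniformly bounded by $\lambda \, \diam(X, d_0)$ and $2\lambda$-Lipschitz with respect to $d_0 \oplus d_0$ via the reverse triangle inequality. Arzel\`a--Ascoli therefore furnishes a uniformly convergent subsequence, and the metric axioms together with the two-sided bound (\ref{d_j}) pass to the limit function $d_\infty$; set $\epsilon_j = \sup_{X \times X} |d_j - d_\infty|$, so $\epsilon_j \to 0$.

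For the Gromov--Hausdorff half, I would take $Z_j = X \sqcup X$ (with copies labeled $X_j$ and $X_\infty$) and equip it with within-copy distances $d_j$ and $d_\infty$ and bridge distance
\[
\tilde d(p_1, q_2) = \inf_{z \in X}\bigl[d_j(p, z) + d_\infty(z, q)\bigr] + \epsilon_j .
\]
A short triangle-inequality check (using $|d_j - d_\infty| \le \epsilon_j$) confirms $\tilde d$ is a metric, that the two inclusions are metric isometric embeddings, and that $\tilde d(p_1, p_2) = \epsilon_j$ for every $p \in X$ (attained at $z = p$), yielding (\ref{GHj}).

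For the intrinsic flat half, I would enlarge to the slab $\widetilde Z_j = X \times [0, \epsilon_j]$ equipped with a metric whose restrictions to $X \times \{0\}$ and $X \times \{\epsilon_j\}$ recover $d_j$ and $d_\infty$ respectively, and use the Ambrosio--Kirchheim product current $B = T \times \lbrack 0, \epsilon_j \rbrack$ as the filling, with $A = 0$. Since $\partial T = 0$, the boundary formula for products gives $\partial B = \pm(\varphi_{\infty\#} T - \varphi_{j\#} T)$, so $d_{\mathcal F}(\varphi_{j\#}T, \varphi_{\infty\#}T) \le \mass_{\widetilde Z_j}(B)$. Bounding the slab metric pointwise by $\lambda\, d_0$ in the horizontal direction and applying the Ambrosio--Kirchheim product mass formula (with its intrinsic $L^2$-type constant) then delivers $\mass_{\widetilde Z_j}(B) \le 2^{(n+1)/2} \lambda^{n+1} \epsilon_j \, \mass_{(X, d_0)}(T)$, which is exactly (\ref{Fj}). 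The remaining assertions, namely that $(X, d_\infty, T)$ is an integral current space with $\set(T) = X$ and that no sequence of points disappears, follow from the bi-Lipschitz equivalence of $d_0$ and $d_\infty$, which preserves rectifiability and positive density, together with the compatible embeddings just built.

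The principal obstacle will be the slab-metric construction and the sharp mass bound. The naive convex interpolation $(1-s/\epsilon_j)d_j + (s/\epsilon_j) d_\infty$ need not satisfy the triangle inequality at intermediate heights, so one will likely work instead with a product-type metric such as the length metric induced by $\max(d_j, d_\infty)$ on horizontal slices plus the Euclidean vertical component, and then verify that the end inclusions are isometric by using the same bridge formula $\inf_z[d_j(p,z)+d_\infty(z,q)]$ that appeared in the Gromov--Hausdorff step. Carrying out the explicit Ambrosio--Kirchheim product-mass computation so as to recover the sharp dimensional constant $2^{(n+1)/2} \lambda^{n+1}$, rather than a weaker bound, is the most delicate part of the argument.
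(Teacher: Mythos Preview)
Your proposal is essentially correct and follows the paper's strategy: Arzel\`a--Ascoli for the limit metric, a slab $X\times(\text{interval})$ carrying the product current $T\times I$ as the filling, and a mass bound via comparison with a genuine Riemannian product metric on the slab. The bi-Lipschitz argument for $\set(T)=X$ with respect to $d_\infty$ is also how the paper proceeds.

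Two minor differences are worth noting. First, for the Gromov--Hausdorff bound you use the classical disjoint-union bridge metric (Gromov's original construction), which in fact yields $d_{GH}\le\epsilon_j$, sharper than the paper's $2\epsilon_j$; the paper instead reads the GH bound off the same slab it builds for the intrinsic flat estimate. Second, the paper takes the interval $[-\epsilon_j,\epsilon_j]$ of width $2\epsilon_j$, which is the source of the extra factor of $2$ in (\ref{Fj}); your width-$\epsilon_j$ slab, if the metric can be made to work, would give the tighter bound you wrote (so ``exactly (\ref{Fj})'' is a slight overstatement).

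The one place where your sketch understates the work is precisely the point you flag. The tentative suggestion of using the taxicab metric $|t_1-t_2|+\max\{d_j,d_\infty\}$ on the slab does \emph{not} by itself restrict isometrically to the two faces: on the $d_j$-face it gives $\max\{d_j,d_\infty\}$, not $d_j$. The paper's Lemma~\ref{common-Z} repairs this by taking the \emph{minimum} of that taxicab distance with four auxiliary ``go to a face first'' distances $d_-,d_+,d_{-+},d_{+-}$, and then spends most of the lemma verifying the triangle inequality case by case and checking that the face restrictions are indeed $d_j$ and $d_\infty$ (this last step is where the defining property $|d_j-d_\infty|\le\epsilon_j$ is used, exactly as in your bridge formula). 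Once that metric is in hand, the mass estimate proceeds just as you outline: bound $d'_j$ above by $\sqrt{2}$ times the isometric product metric built from $\lambda d_0$, and scale.
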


To prove this theorem we need a series of lemmas:

\begin{lem}\label{unif}
Under the hypothesis of Theorem~\ref{app-thm}, there exists a subsequence, also denoted $d_j$,
and a length metric $d_\infty$ satisfying (\ref{d_j}) such that
$d_j$ converges uniformly to $d_\infty$:
\be
\lim_{j\to \infty} \sup\left\{|d_j(p,q)-d_\infty(p,q)|:\,\, p,q\in X\right\} \to 0
\ee
and
 $(X, d_\infty, T)$ is an integral current space.
 \end{lem}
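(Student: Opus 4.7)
The proof naturally splits into extracting a uniformly convergent subsequence via Arzelà-Ascoli and then verifying the limit inherits the required structure. First I would pass to the metric completion $\bar X$ of $(X, d_0)$, which is compact by the precompactness hypothesis. Each $d_j$ is $\lambda$-Lipschitz as a function of each variable with respect to $d_0$ (immediate from (\ref{d_j}) and the triangle inequality), hence $\lambda$-Lipschitz on $X \times X$ equipped with the sum metric, so it extends continuously to $\bar X \times \bar X$ and the extension still satisfies (\ref{d_j}). The family $\{d_j\}$ is uniformly bounded by $\lambda \cdot \diam_{d_0}(\bar X)$ and equicontinuous, so Arzelà-Ascoli produces a subsequence converging uniformly on $\bar X \times \bar X$ to a continuous function $d_\infty$.

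I would then verify that $d_\infty$ is a metric satisfying (\ref{d_j}): symmetry, the triangle inequality, and the two-sided bilipschitz bounds all pass to the pointwise limit, and strict positivity on distinct points comes from $d_\infty \ge d_0/\lambda$. For the length-metric property, I would invoke the standard fact that a uniform limit of length metrics is a length metric (cf.\ Burago-Burago-Ivanov), which applies because the $d_j$ arising in the paper's applications are intrinsic distances and hence length metrics; alternatively, one may replace $d_\infty$ by its induced length metric and verify that (\ref{d_j}) is preserved, possibly with an adjusted constant, when $d_0$ itself is a length metric.

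Finally, for the integral current space conclusion, the key observation is that the Ambrosio-Kirchheim current structure depends on the metric only through the Lipschitz class of functions, which is invariant under bilipschitz changes of metric. Thus $T$, originally a current on $(X, d_0)$, automatically defines an integer rectifiable current on $(X, d_\infty)$; the boundary $\partial T$ is defined algebraically and is unchanged, so it remains zero; the mass is controlled by $\mass_{(X, d_\infty)}(T) \le \lambda^n \mass_{(X, d_0)}(T) < \infty$, using the scaling property recalled in the background; and $\set(T) = X$ persists because mutually bilipschitz metrics induce mutually absolutely continuous Hausdorff $n$-measures, preserving the set of positive density.

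I expect the Arzelà-Ascoli step and the current-structure preservation to be essentially routine given the bilipschitz hypothesis. The only genuinely delicate point is the length-metric claim: it seems to implicitly require that the $d_j$ are themselves length metrics (as in the application of the lemma to the intrinsic metrics on $\Sigma_j(r)$), or else one must reinterpret $d_\infty$ as the length metric induced by the Arzelà-Ascoli limit and verify (\ref{d_j}) under the assumption that $d_0$ is a length metric. Either route completes the argument.
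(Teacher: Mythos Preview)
Your approach is essentially the same as the paper's: extend to $\bar X$, apply Arzel\`a--Ascoli using equicontinuity from (\ref{d_j}), check that $d_\infty$ inherits (\ref{d_j}) and is therefore a metric, and then use the bilipschitz control to show $\set(T)=X$ is unchanged so that $(X,d_\infty,T)$ is an integral current space. The paper phrases the last step slightly differently, comparing the mass measures $\lambda^{-n}\|T\|_0\le \|T\|_j\le \lambda^n\|T\|_0$ and reading off directly that the set of positive lower density is the same for every $j\in\{0,1,\dots,\infty\}$, but this is exactly your absolute-continuity observation.

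Your instinct about the length-metric claim is on target: the paper's own proof does not address it at all, and in fact the hypotheses of Theorem~\ref{app-thm} do not assume the $d_j$ are length metrics. In the actual application (Lemma~\ref{biLip}) the $d_j$ are metrics on $\Sigma_j(r)$ \emph{restricted} from the ambient $M_j$, which are not length metrics on $\Sigma_j(r)$ in general, so neither of your proposed routes (uniform limit of length metrics, or replacing $d_\infty$ by its induced length metric) would apply there. The word ``length'' in the statement appears to be an inessential oversight; nothing in the paper's subsequent arguments uses it.
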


\begin{proof}
Observe that the functions $d_j$ may be extended to
the metric completion:
\be
d_j: \overline{X}\times \overline{X} \to [0, \diam_{d_j}(X)]\subset [0, \lambda\diam_{d_0}(X)].
\ee
By (\ref{d_j}) they are equicontinuous and so by the Arzela-Ascoli
Theorem they have a subsequence converging uniformly to a function
\be
d_\infty: \overline{X}\times \overline{X} \to [0, \lambda\diam_{d_0}(X)].
\ee
Taking the limit of \ref{d_j} we see that $d_\infty$
satisfies \red{(}\ref{d_j}\red{)} as well.   In particular $d_\infty$ is a 
metric on $X$.  

Furthermore the Ambrosio-Kirchheim
mass measure defined using $d_0$ and defined using 
$d_j$ may be related as follows:
\be
\lambda^n ||T||_0 \ge  ||T||_j \ge ||T||_0 \lambda^{-n}.
\ee 
Recall that $X=\set_0(T)\subset \bar{X}$ because $(X,d_0,T)$
is an integral current space (by the definition of integral current space).   
In general the set of positive density
also depends upon the metric just as the mass measure
done.   Here we have
\begin{eqnarray}
X=\set_0(T)
&=&\left\{p\in \bar{X}: \,\liminf_{r\to 0}\frac{||T||_0(B_p(r))}{r^n}>0\right\}\\
&=&\left\{p\in \bar{X}: \,\liminf_{r\to 0}\frac{||T||_j(B_p(r))}{r^n}>0\right\}\\
&=&\set_j(T)
\end{eqnarray}
and so $(X, d_j, T)$ is also an integral current space.   This is
true for all $j=1,2,...,\infty$.
\end{proof}

\begin{lem}\label{common-Z}
Given two metric spaces $(X, d_j)$ and $(X, d_\infty)$
there exists
a common metric space
\be
Z_j= [-\varepsilon_j, \varepsilon_j] \times X.
\ee
where
\be\label{epsj}
\epsilon_j= \sup\left\{|d_j(p,q)-d_\infty(p,q)|:\,\, p,q\in X\right\} 
\ee
with a metric $d'_j$ on $Z_j$ such that
\be \label{iso-}
d'_j((-\varepsilon_j,p), (-\varepsilon_j,q)) = d_j(p,q)
\ee
\be \label{iso+}
d'_j((\varepsilon_j,p), (\varepsilon_j,q)) = d_\infty(p,q).
\ee
Thus we have metric isometric embeddings
$\varphi_j:(X, d_j)\to (Z_j, d'_j)$ and
$\varphi_j':(X, d_\infty)\to (Z_j, d'_j)$ such that
\be
\varphi_j(p)=(-\varepsilon_j, p)
\textrm{ and } \varphi'_j(p)=(\varepsilon_j, p).
\ee
In addition, if $d_0, d_j$ satisfy  \eqref{d_j}, then 
\be\label{d0'}
d'_j(z_1,z_2) \le d'_0((t_1,p_1),(t_2, p_2)):= |t_1-t_2|+ \lambda d_0(p_1, p_2).
\ee
More precisely, we define $d'_j$ by 
\begin{eqnarray}
d'_j(z_1,z_2) &:=&\min\left\{d, d_-, d_+, d_{-+}, d_{+-}\right\} \textrm{ where }\\
d\,\,\,\,\,&=&d(z_1,z_2) \,\,\,\,\,\,= \,\,|t_1-t_2|+ \max\left\{d_j(p_1,p_2), d_\infty(p_1,p_2)\right\}\\
d_-\,\,\,&=&d_-(z_1,z_2)\,\,\,= \,\,|t_1+\epsilon_j|+|t_2+\epsilon_j| + d_j(p_1,p_2)\\
d_+\,\,\,&=& d_+(z_1,z_2)\,\,\,=\,\,|t_1-\epsilon_j|+|t_2-\epsilon_j| + d_\infty(p_1,p_2)\\
d_{-+}&=&d_{-+}(z_1,z_2)=\,\,\inf\left\{d_-(z_1,z)+d_+(z,z_2):\, z\in Z_j\right\}\\
    d_{+-}&=&d_{+-}(z_1,z_2)=\,\,\inf\left\{d_+(z_1,z)+d_-(z,z_2):\, z\in Z_j\right\}.
 \end{eqnarray}
\end{lem}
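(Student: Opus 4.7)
The strategy is to verify directly that the explicit five–term minimum formula for $d'_j$ defines a metric on $Z_j=[-\varepsilon_j,\varepsilon_j]\times X$ with the stated boundary behavior and upper bound. Each of the five candidates has a geometric interpretation: $d$ is a ``direct'' path using $|t_1-t_2|$ in the slab direction together with $\max\{d_j,d_\infty\}$ on $X$ (which is a metric because the pointwise maximum of two metrics is a metric); $d_-$ and $d_+$ model paths that descend to a single boundary slab $\{\mp\varepsilon_j\}\times X$, traverse $X$ using the appropriate metric, and return; $d_{-+}$ and $d_{+-}$ model single slab-crossings. Symmetry and positivity on the diagonal are immediate from the definitions.

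The plan has four steps. First I would verify the boundary identities \eqref{iso-} and \eqref{iso+}. At $t_1=t_2=-\varepsilon_j$, the term $d_-$ evaluates to $d_j(p_1,p_2)$, and one checks that every other term bounds $d_j(p_1,p_2)$ from above; the comparisons with $d_+$ and $d_{-+}$ use the defining property $|d_j(p,q)-d_\infty(p,q)|\le\varepsilon_j$ from \eqref{epsj}, while the crossing terms gain an additional $2\varepsilon_j$ from traversing the slab. The analogous computation handles $t_1=t_2=\varepsilon_j$. Second, I would verify the upper bound \eqref{d0'}: since $d'_j\le d$ by construction and $\max\{d_j,d_\infty\}\le\lambda d_0$ by hypothesis \eqref{d_j}, we get $d(z_1,z_2)\le |t_1-t_2|+\lambda d_0(p_1,p_2)=d'_0(z_1,z_2)$.

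The main work, and the main obstacle, is the triangle inequality. I would handle it by enumerating pairs of realizing terms at $(z_1,z_2)$ and $(z_2,z_3)$ and exhibiting a realizing term at $(z_1,z_3)$. Like-kind pairs ($d_-+d_-$, $d_++d_+$, $d+d$) reduce to the triangle inequalities for $d_j$, $d_\infty$, or $\max\{d_j,d_\infty\}$ together with the triangle inequality for $|\cdot|$ in the $t$-coordinate. Mixed pairs $(d_-,d_+)$ and $(d_+,d_-)$ are absorbed tautologically by the definitions of $d_{-+}$ and $d_{+-}$ as infima. The remaining mixed pairs involving $d$ on one side reduce to single comparisons of the form $d_-(z_1,z_3)\le d(z_1,z_2)+d_-(z_2,z_3)$, which split into a $t$-part handled by the triangle inequality for absolute values and an $X$-part handled by triangle inequality for $d_j$ together with $d_j\le\max\{d_j,d_\infty\}$. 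Finally, cases involving $d_{-+}$ or $d_{+-}$ on one side follow from the preceding ones by choosing a near-optimizer $z$ in the infimum and observing that an extra $d$-step (or $d_\pm$-step) from $z_1$ to $z_2$ can be prepended without breaking the infimum definition at $(z_1,z_3)$.

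The delicate point throughout is that the choice $\varepsilon_j=\sup|d_j-d_\infty|$ provides exactly the slack needed: it guarantees that paying a ``crossing cost'' of $2\varepsilon_j$ in the slab direction is enough to compensate for replacing $d_j$ by $d_\infty$ (or vice versa) along any $X$-part of a path. Without this choice, the $d_{-+}$ and $d_{+-}$ terms could fall below the expected $d_-$ value at the left boundary, violating \eqref{iso-}. Once the metric axioms are checked, the maps $\varphi_j(p)=(-\varepsilon_j,p)$ and $\varphi'_j(p)=(\varepsilon_j,p)$ are metric isometric embeddings by \eqref{iso-} and \eqref{iso+}, completing the lemma.
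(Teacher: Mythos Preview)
Your approach is essentially the same as the paper's: a direct case-by-case verification that the five-term minimum defines a metric, using the defining property of $\varepsilon_j$ to control the mixed cases, followed by the boundary identities and the upper bound \eqref{d0'}. The paper carries out exactly this enumeration, first simplifying $d_{-+}$ and $d_{+-}$ to explicit closed forms, then checking \eqref{iso-} term by term, and finally dispatching the triangle inequality in the same groupings you describe; the only substantive point you leave slightly implicit is that in a few of the $d_{-+}$ cases (for instance $d_{-+}(z_1,z_2)+d_-(z_2,z_3)$) the realizing term at $(z_1,z_3)$ is $d_-$ rather than $d_{-+}$, and this is precisely where one invokes $d_j(p_1,p_2)\le\inf_p\{d_j(p_1,p)+d_\infty(p,p_2)\}+\varepsilon_j$.
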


Note that $Z_j$ need not be a complete metric space,
even if $X$ is complete with respect to both metrics.
See Example~\ref{not-complete}.   However
we may always take the metric completion of $Z_j$ if we need
a complete metric space.   

Before proving this lemma we apply it to prove
Theorem~\ref{app-thm}:

\begin{proof}
First apply Lemmas~\ref{unif} and~\ref{common-Z}
and take the metric completion of $Z_j$ if it is not
yet complete.
Observe that 
\begin{eqnarray}
d'_j( (-\epsilon_j, p), (\epsilon_j, p)) 
&\le& d_-( (-\epsilon_j, p), (\epsilon_j, p))\\
&=& 0 + 2\epsilon_j + d_j(p,p) = 2\epsilon_j.
\end{eqnarray}
Thus
\be
d_H^{Z_j}(\varphi_j(X), \varphi_j'(X)) \le 2\epsilon_j
\ee
and we have (\ref{GHj}) which implies (\ref{GHjlim}).

To obtain (\ref{Fj}), we take 
$B_j= I_\epsilon \times T $ to be the product integral current
on
$Z_j =  I_\epsilon\times X$ where
$I_\epsilon=[-\epsilon_j, \epsilon_j]$
(see \cite{Sormani-properties}
for the precise definition of such products of intervals with
currents).   When $T$ is just integration over
a smooth manifold $M$, then $I_\epsilon\times T$
is just integration over $I_\epsilon\times M$.   

In \cite{Sormani-properties}
it is proven that
\be
\partial( I_\epsilon\times T)=I_\epsilon\times(\partial T)
+ (\partial I_\epsilon)\times T.
 \ee
Since $\partial T=0$ we have
\be
\partial B_j=\phi_{j\#}T-\phi'_{j\#}T.
\ee

Then by the definition of the intrinsic flat distance,
\begin{eqnarray}
d_{\mathcal{F}}\left((X, d_j, T), (X, d_\infty, T)\right)
&\le & d_F^{Z_j}(\phi_{j\#}T,\phi'_{j\#}T)\\
&\le & \mass_{(Z_j,d'_j)} (B) + 0.
\end{eqnarray}
So we need only estimate the mass of $B_j$.

In \cite{Sormani-properties} it is shown that
\be
\mass_{(Z_j,D_j)}([-\epsilon_j,\epsilon_j]\times T) = 2\epsilon_j \,\mass_{(X,\lambda d_0)}(T)
\ee
when the distance, $D_j$ is the isometric product metric
on $Z_j$ defined with $d_0$:
\be
D_j((t_1, p_1),(t_2, p_2))=\sqrt{ |t_1-t_2|^2
                                        + (\lambda d_0(p_1, p_2))^2 }.
\ee
Since 
\begin{eqnarray}
d'_j(z_1,z_2) &\le& d'_0((t_1,p_1),(t_2, p_2)):= |t_1-t_2|+ \lambda d_0(p_1, p_2)\\
&\le& \,\,\sqrt{2} \,\,D_j((t_1, p_1),(t_2, p_2)).
\end{eqnarray}
We have
\begin{eqnarray}
\mass_{(Z_j,d'_j)} (B) &\le& \mass_{(Z_j,\sqrt{2}D)} (B) \\
&\le& 2^{(n+1)/2} \,\mass_{(Z_j,D_j)} (B) \\
&\le& 2^{(n+1)/2} 2\epsilon_j \,\mass_{(X,\lambda d_0)}(T)\\
&\le &2^{(n+1)/2} \lambda^{n+1} 2\epsilon_j \,\mass_{(X,d_0)}(T).
\end{eqnarray}
Thus we have (\ref{Fj}) which implies (\ref{Fjlim}).

This completes the proof of Theorem~\ref{app-thm}.
\end{proof}

Finally we prove Lemma~\ref{common-Z}:

\begin{proof}
First note that
\begin{eqnarray}
\qquad d_{-+}          &=& |t_1+\epsilon_j|+|t_2-\epsilon_j|+2\epsilon_j +\inf\left\{d_j(p_1,p)+d_\infty(p,p_2): \,p\in X\right\}\\
\qquad d_{+-} &=& |t_1-\epsilon_j|+|t_2+\epsilon_j|+2\epsilon_j +\inf\left\{d_\infty(p_1,p)+d_j(p,p_2): \,p\in X\right\}.
\end{eqnarray}

Observe that $d'_j$ is immediately symmetric and nonnegative. It is positive definite because 
$$
\min\left\{d(z_1, z_2), d_-(z_1, z_2), d_+(z_1, z_2)\right\} \ge |t_1-t_2|  + \min\left\{d_j(p_1,p_2), d_\infty(p_1,p_2)\right\}
$$
and clearly $d_{-+}(z_1, z_2), d_{+-}(z_1, z_2)  > 2\epsilon_j$ for distinct $z_1, z_2$.

Before proving the triangle inequality, we 
apply (\ref{epsj}) to prove
(\ref{iso-}):
\begin{eqnarray*}
d_-((-\varepsilon_j,p_1), (-\varepsilon_j,p_2))\,\,&=&d_j(p_1, p_2)\\
d\,\,((-\varepsilon_j,p_1), (-\varepsilon_j,p_2))\,\,\,&\ge& d_j(p_1, p_2)\\
d_+((-\varepsilon_j,p_1), (-\varepsilon_j,p_2))\,\,&=&4\epsilon_j+d_\infty(p_1,p_2)\\
&\ge& 4\epsilon_j+ d_j(p_1,p_2)-\epsilon_j\ge d_j(p_1,p_2)\\
d_{-+}((-\varepsilon_j,p_1), (-\varepsilon_j,p_2))&=&
 0+ 2\epsilon_j+2\epsilon_j +\inf\left\{d_j(p_1,p)+d_\infty(p,p_2): \,p\in X\right\}\\
 &\ge& 4\epsilon_j + d_j(p_1,p_2)-\epsilon_j \ge d_j(p_1,p_2)\\
 d_{+-}((-\varepsilon_j,p_1), (-\varepsilon_j,p_2))&=&
 2\epsilon_j+0+2\epsilon_j +\inf\left\{d_\infty(p_1,p)+d_j(p,p_2): \,p\in X\right\}\\
 &\ge& 4\epsilon_j + d_j(p_1,p_2)-\epsilon_j \ge d_j(p_1,p_2).
\end{eqnarray*}
Naturally (\ref{iso+}) follows in a similar way.

It suffices now to prove the triangle inequality. 

In (\ref{tri-a})-(\ref{tri-b}) we prove the triangle
inequality in the case where:
\be\label{tri-01}
d'_j(z_1,z_2)=\min\left\{d(z_1, z_2), d_-(z_1,z_2), d_+(z_1,z_2)\right\}
\ee
and
\be\label{tri-02}
d'_j(z_2,z_3)=\min\left\{d(z_2, z_3), d_-(z_2,z_3), d_+(z_2,z_3)\right\}.
\ee

Observe that
\begin{eqnarray} \label{tri-a}
 \qquad d'_j(z_1,z_3)&\le &d(z_1,z_3)\\
 &=& |t_1-t_3|+ \max\left\{d_j(p_1,p_3), d_\infty(p_1,p_3)\right\}\\
&\le & |t_1-t_2|+|t_2-t_3|\\
&&+\max\left\{d_j(p_1,p_2)+d_j(p_2,p_3), d_\infty(p_1,p_2)+d_\infty(p_2,p_3)\right\}\\
&\le& |t_1-t_2|+\max\left\{d_j(p_1,p_2), d_\infty(p_1,p_2)\right\}\\
&&+|t_2-t_3|+\max\left\{d_j(p_2,p_3), d_\infty(p_2,p_3)\right\}\\
&=&d(z_1, z_2)+d(z_2,z_3)\\
d'_j(z_1,z_3)&\le &d_-(z_1,z_3)\\
&=& |t_1+\epsilon_j|+|t_3+\epsilon_j| + d_j(p_1,p_3)\\
&\le & |t_1-t_2|+|t_2+\epsilon_j|+ |t_3+\epsilon_j| \\
&&+ d_j(p_1,p_2)+d_j(p_2, p_3)\\
&\le & |t_1-t_2|+\max\left\{d_j(p_1,p_2), d_\infty(p_1,p_2)\right\}\\
&& |t_2+\epsilon_j|+ |t_3+\epsilon_j| +d_j(p_2, p_3)\\
&\le & d(z_1,z_2) + d_-(z_2,z_3)
\end{eqnarray}
and similarly
\be
d'_j(z_1,z_3) \le d(z_1,z_2)+d_+(z_2,z_3).
\ee
Clearly
\be
|t_1+\epsilon_j|+|t_3+\epsilon_j|\le 
|t_1+\epsilon_j|+2 |t_2+\epsilon_j|+|t_3+\epsilon_j|
\ee
so
\begin{eqnarray}
d'_j(z_1,z_3) &\le& d_-(z_1,z_2)+d_-(z_2,z_3)\\
d'_j(z_1,z_3) &\le& d_+(z_1,z_2)+d_+(z_2,z_3).
\end{eqnarray}
Immediately by the definition we have
\begin{eqnarray}
&&d'_j(z_1,z_3) \le d_{-+}(z_1, z_3)\le  d_-(z_1,z_2)+d_+(z_2,z_3)\\
&&d'_j(z_1,z_3) \le d_{+-}(z_1, z_3)\le  d_+(z_1,z_2)+d_-(z_2,z_3).
\label{tri-b}
\end{eqnarray}
Thus we have shown the triangle inequality holds
as long as (\ref{tri-01})-(\ref{tri-02}) hold.

We need only prove the triangle inequality for
all the five cases where
\be
d_j'(z_1, z_2)=d_{-+}(z_1, z_2).
\ee
The rest of the cases will follow by symmetry in the
definitions of $d_{-+}$ and $d_{+-}$ and in swapping of
the points $z_1, z_2$ with $z_3, z_2$.

\begin{eqnarray}
\qquad d_j'(z_1, z_3) &\le & d_{-+}(z_1, z_3)\\
&=& |t_1+\epsilon_j|+|t_3-\epsilon_j|+2\epsilon_j +\inf\left\{d_j(p_1,p)+d_\infty(p,p_3): \,p\in X\right\}\\
&\le & |t_1+\epsilon_j|+|t_2-\epsilon_j|+2\epsilon_j +|t_3-t_2|\\
&&+\inf\left\{d_j(p_1,p)+d_\infty(p,p_2) + d_\infty(p_2,p_3): \,p\in X\right\}\\
&\le& d_{-+}(z_1, z_2) + d(z_2, z_3)
\end{eqnarray}

\begin{eqnarray}
d_j'(z_1, z_3) &\le & d_{-+}(z_1, z_3)\\
&= & \inf\left\{d_-(z_1,z)+d_+(z,z_3):\, z\in Z\right\}\\
&\le & \inf\left\{d_-(z_1,z)+d_+(z,z_2)+ d_+(z_2, z_3):\, z\in Z_j\right\}\\
&= & d_{-+}(z_1,z_2) + d_+(z_2, z_3)
\end{eqnarray}

Below we will use the following inequality, which follows from \eqref{epsj},
\begin{eqnarray}
d_j (p_1, p_2) \le \inf\left\{ d_j (p_1, p) + d_{\infty}(p, p_2): p\in X\right\} +  \epsilon_j.
\end{eqnarray}
So
\begin{eqnarray}
d_j'(z_1, z_3) &\le & d_{-}(z_1, z_3)\\
&= & |t_1+\epsilon_j| + |t_3 + \epsilon_j | + d_j (p_1, p_3)\\
&\le & |t_1+\epsilon_j | + |t_3+ \epsilon_j | + d_j(p_1, p_2) + d_j (p_2, p_3)\\
&\le &  |t_1+\epsilon_j | + |t_3+ \epsilon_j | \\
&& +  \inf \left\{ d_j(p_1, p) + d_{\infty} (p, p_2): p\in X\right\} + \epsilon_j+ d_j (p_2, p_3)\\
&\le& d_{-+}(z_1,z_2) + d_-(z_2, z_3)
\end{eqnarray}
and
\begin{eqnarray}
\qquad d_j'(z_1, z_3) &\le & d_{-+}(z_1, z_3)\\
&=& |t_1+\epsilon_j|+|t_3-\epsilon_j|+2\epsilon_j +\inf\left\{ d_j(p_1,p)+d_\infty(p,p_3): p\in X\right\}\\
&\le&  |t_1+\epsilon_j|+|t_3-\epsilon_j|+2\epsilon_j \\
&& +  \inf\left\{ d_j(p_1, p') + d_j(p', p_2): p'\in X\right\}  \\
&&+ \inf\left\{ d_j(p_2, p) + d_{\infty} (p, p_3): p\in X\right\}\\
&\le&  |t_1+\epsilon_j|+|t_3-\epsilon_j|+3\epsilon_j \\
&& +  \inf\left\{ d_j(p_1, p') + d_\infty(p', p_2): p'\in X\right\}  \\
&&+ \inf\left\{ d_j(p_2, p) + d_{\infty} (p, p_3): p\in X\right\}\\
&\le & d_{-+}(z_1,z_2) + d_{-+}(z_2, z_3)
\end{eqnarray}
and
\begin{eqnarray}
d_j'(z_1, z_3) &\le & d_{-}(z_1, z_3)\\
&=& |t_1 + \epsilon_j| + |t_3 + \epsilon_j| + d_j (p_1, p_3) \\
&\le& |t_1 + \epsilon_j| + |t_3 + \epsilon_j| + d_j (p_1, p_2) + d_j (p_2, p_3)\\ 
&\le & |t_1 + \epsilon_j| + |t_3 + \epsilon_j| + 2\epsilon_j\\
&& + \inf\left\{ d_j (p_1, p) + d_\infty (p, p_2): p\in X\right\} \\
&&+ \inf\left\{ d_j (p_2, p) + d_\infty (p, p_3): p\in X\right\} \\
&\le& d_{-+}(z_1,z_2) + d_{+-}(z_2, z_3).
\end{eqnarray}
Thus $d_j'$ is a metric.
\end{proof}

The metric space $Z_j$ constructed in Lemma~\ref{common-Z}
is not necessarily complete even if $X$ is complete with respect
to both $d_j$ and $d_\infty$:  

\begin{example} \label{not-complete}
Let $X=\{0,1/2,1/4,...\} \cup \{1\}$.
Let $d_j(p_1, p_2)=|p_1-p_2|$.  Let $F:X \to X$ be the identity
map on $X\setminus \{0,1\}$ and $F(0)=1$ and $F(1)=0$.  Let
$d_\infty(p_1,p_2)=|F(p_1)-F(p_2)|$.   Both $(X, d_j)$ and
$(X, d_\infty)$ are complete but with different limits for the
sequence $\{1/2, 1/4,...\}$:
\[
	d_j (1/i, 0) \to 0 \quad \mbox{and} \quad d_{\infty} (1/i, 1) \to 0\quad \quad \mbox{as } i \to \infty.
\]

Observe that $\epsilon_j= 1$ because 
\be
1\ge \epsilon_j \ge \lim_{i\to \infty} |d_j(1/i,1)-d_\infty(1/i,1)|=1.
\ee
So 
\be
Z_j= [-1,1] \times X.
\ee
Take the sequence of points $z_i=(0, 1/i)$.   This sequence
is Cauchy in $Z_j$ because
\be
d'_j(z_i,z_k) \le d(z_i,z_k) = 0+ |1/i-1/k| \quad \forall i,k >1.
\ee

Assume on the contrary that this sequence of points converges
to a point $z_\infty=(t_\infty, p_\infty)\in Z_j$.   Observe that for any $z\in Z_j$, 
 \begin{eqnarray}
d_+(z_i,z) &\ge& |0-1|+|t-1|\ge 1\\
d_-(z_i,z) &\ge & |0+1|+|t+1|\ge 1\\
d_{-+}(z_i,z_\infty)&=&\inf\left\{d_-(z_i,z)+d_+(z,z_\infty):\, z\in Z_j\right\} \ge 1\\
d_{+-}(z_i,z_\infty)&=&\inf\left\{d_+(z_i,z)+d_-(z,z_\infty):\, z\in Z_j\right\} \ge 1.
\end{eqnarray}
Therefore, for $i$ sufficiently large, 
\be
d'_j(z_i,z_\infty) = d(z_i,z_\infty)=
|0-t_\infty|+ \max\left\{d_j(1/i, p_\infty), d_\infty(1/i,p_\infty)\right\} \to 0.
\ee
Thus $p_{\infty}$ is the limit of the sequence $\{1/i\}$ with respect to both metrics $d_j, d_{\infty}$, which is a contradiction.
Thus $Z_j$ is not complete.

The metric completion of $Z_j$ is
\be
\bar{Z}_j=[-1,1] \times (X\cup \{p_\infty\})  \, |_\sim
\ee
where $(-1,p_\infty)\sim(-1,0)$ and $(1, p_\infty)\sim(1,1)$.
For $t_i\in [-1,1]$ and $p_i \in X$ we have
\begin{eqnarray}
d_j'((t_1,p_1), (t_2,p_2)) 
&=& \textrm{ as in Lemma~\ref{common-Z}}\\
d_j'((t_1,p_1), (t_2,p_\infty)) 
&=& \lim_{k\to \infty} d_j'((t_1,p_1), (t_2,1/k))\\
d_j'((t_1,p_\infty), (t_2,p_\infty))   
&=& \lim_{k\to \infty} d_j'((t_1,1/k), (t_2,1/k))
\end{eqnarray}
Note that with this distance
\begin{eqnarray}
d_j'((-1,0), (-1,p_\infty)) 
&=& \lim_{k\to \infty} d_j'((-1,0), (-1,1/k))\\
&=& \lim_{k\to \infty} d_j(0, 1/k)=d_j(0, 0)=0 \\
d_j'((1,1),(1, p_\infty))
&=& \lim_{k\to \infty} d_j'((1,1), (1,1/k))\\
&=& \lim_{k\to \infty} d_\infty(1, 1/k)=d_j(0, 1/k)=0 \\
\end{eqnarray}
and that is why $(-1,p_\infty)\sim(-1,0)$ and $(1, p_\infty)\sim(1,1)$.
\end{example}

\bibliographystyle{alpha}
\bibliography{2014}

\end{document}